\newenvironment{proof}{\noindent{\bf Proof}}{\hspace*{\fill}$\Box$}
\newenvironment{proofof}[1]{%
\noindent {\bf Proof of #1}}%
{\hspace*{\fill}$\Box$}
\newtheorem{theorem}{Theorem}[section]
\newtheorem{lemma} [theorem] {Lemma}%[section]
\newtheorem{corollary} [theorem] {Corollary}%[section]
\newtheorem{remark} [theorem] {Remark}%[section]
\newtheorem{conjecture} [theorem] {Conjecture}%[section]
\def\E{{\mathbb E}\,}
\newcommand{\pr}{\mathbb P}
\newcommand{\conc}{\mathcal{Q}}
\newcommand{\concdiam}{\tilde{\mathcal{Q}}}
\newcommand{\diam}{\mathrm{diam}}
\newcommand{\cl}{\mathrm{cl}}
\newcommand\nuopt[1]{\nu_{#1}^*}
\begin{document}

\title{Anticoncentration of random vectors via the strong perfect graph theorem}

\author{Tomas Juškevičius\footnote{Supported by the Czech Science Foundation, grant number 20-27757Y, with institutional support RVO:67985807.}\, and Valentas Kurauskas}

%\author{Tomas Ju\v skevi\v cius\footnote{Supported by the Czech Science Foundation, grant number 20-27757Y, with institutional support RVO:67985807},\, Valentas Kurauskas}
\date{12 August 2023}

\maketitle

\begin{abstract}
    In this paper we give anticoncentration bounds for sums of independent random vectors in finite-dimensional vector spaces. In particular, we asymptotically establish a conjecture of Leader and Radcliffe (1994) and a question of 
    %Lee 
    Jones (1978). 

    The highlight of this work is an application of the strong perfect graph theorem by Chudnovsky, Robertson, Seymour and Thomas (2003) in the context of anticoncentration.
\end{abstract}

\bigskip

\hskip -0.5 cm 
\textbf{keywords}: concentration function; Littlewood-Offord problem; perfect graph

\vskip 1 cm

\maketitle

\section {Introduction}
\label{sec.introduction}

Let $X$ be a random vector taking values in some metric space $\mathcal{S}$. Given $t>0$ define  
$$\conc(X, t)=\sup_B \pr(X \in B)\text{\,\,\,\,and\,\,\,\,} \concdiam(X,t) = \sup_S \pr(X \in S),$$ 
where $B$ is an open ball and $S$ an open set, both having diameter at most $t$. We clearly have $\conc(X, t) \le \concdiam(X,t)$.

%The function 
$\conc(X, \cdot)$ is often referred to as the \textit{L\'{e}vy concentration function} of $X$ or just the \textit{concentration function} of $X$ in the literature (sometimes the ball $B$ is considered to be closed), see e.g. \cite{Levy}, \cite{Bobkov}. The similar function $\concdiam(X, \cdot)$ was used by Leader and Radcliffe~\cite{LR}
in their work on generalized Littlewood-Offord type inequalities.

%In this work we are going to consider general metric or normed spaces. Each such space $\mathcal{S}$
%has its associated distance and (or) norm. As the diameter of a set is supremum of the distance of
%any pair of points in that set, the concentration functions $\conc$ and $\concdiam$ both
%depend on $\mathcal{S}$. To make it simpler, we will not indicate this in the notation, 
%the reader should infer it from the context. In the special 1-dimensional case, 
%i.e., for real-valued random variables (where these two functions coincide),
%we will use the usual metric $d(x,y) = |x-y|$ unless mentioned otherwise.

 %for some $k\in \mathbb{N}$. 
 %A special case of our main result 
 One special case of our results (see Section~\ref{sec.results}) is as follows.
\begin{theorem}\label{thm.front}
    Fix an integer $d \ge 2$.
    Let $X_1, \dots, X_n$ be independent random vectors in $\mathbb{R}^d$ equipped with a norm $\|\cdot\|$. Assume that for some $k\in \mathbb{N}$ and each $i \in \{1, \dots, n\}$ we have 
    $$\concdiam(X_i, 1) \le \frac{1}{k}.$$
Let $U_1,\ldots, U_n$ be %a collection of independent uniform 
    i.i.d. random variables distributed uniformly on the set $\{ 1,2,\ldots,k\}$.
    Then
    \begin{eqnarray}
\concdiam(X_1 + \dots + X_n, 1) &\le& 
        (1 + o(1)) \concdiam\left(U_1 + \dots + U_n, 1\right)
        \label{eq.mainfront}
        \\&=& 
        (1 + o(1))\max_{\ell \in \mathbb{Z}}\mathbb{P}\left(U_1 + \dots + U_n=\ell\right).
        \nonumber
    \end{eqnarray}
\end{theorem}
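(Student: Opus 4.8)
The plan is to prove the more general inequality stated in Section~\ref{sec.results} and to read off Theorem~\ref{thm.front} as a special case; here is the skeleton. First, the routine reductions. Since $\concdiam(\cdot,1)$ is a supremum of probabilities of open sets it is lower semicontinuous, which lets me replace each $X_i$, at the cost of a factor $1+o(1)$, by a finitely supported vector with rational atom probabilities on a common finite set $V\subseteq\mathbb R^d$; and because the bound is asymptotic I may take $n$ large. Now fix all the laws but one and regard $\concdiam(X_1+\dots+X_n,1)$ as a function of the law $w_i$ of $X_i$: it is a supremum of functionals that are linear in $w_i$, hence convex, so on the polytope of admissible laws it is maximized at an extreme point. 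Here ``admissible'' means $w_i\in\Delta(V)$ and $w_i(K)\le 1/k$ for every set $K\subseteq V$ of diameter $\le 1$; equivalently, introducing the \emph{proximity graph} $G$ on $V$ (join $u\neq v$ when $\{u,v\}$ lies in a common open set of diameter $\le 1$), whose cliques are exactly the small-diameter subsets, ``admissible'' means $w_i(K)\le 1/k$ for every clique $K$ of $G$.

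The heart of the matter is the following claim about such an extreme point: its support induces a perfect subgraph of $G$, and consequently the extreme point is the uniform distribution $u_{T_i}$ on an \emph{independent} set $T_i$ of $G$ (a set of pairwise ``far'' points, no two of which share a diameter-$\le 1$ set) with $|T_i|\ge k$. Granting this, each $X_i$ may be taken uniform on a far-apart set of at least $k$ points, then --- writing the uniform distribution on an $m$-point set as the average of the uniform distributions on its $k$-subsets --- uniform on exactly $k$ far-apart points. A local-limit/Fourier estimate, carried out in full for the general theorem, then shows that among all sums of independent uniform distributions on admissible $k$-point sets the extremal one is the i.i.d.\ sum $U_1+\dots+U_n$: the near-equally-spaced collinear $k$-set is the least spread out, and any genuine use of a further coordinate only makes the convolution more anticoncentrated (indeed by a power of $n$), which is also why $d\ge 2$ changes the proof, not the extremal value. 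This gives \eqref{eq.mainfront}; the displayed equality is immediate, consecutive integers lying at distance $1$ and an open set of diameter at most $1$ meeting $\mathbb Z$ in at most one point.

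It remains to see why the support of an extreme point induces a perfect graph --- and this is where the strong perfect graph theorem enters. The description ``extreme point $=$ a single $u_{T_i}$'' is, via the polyhedral characterization of perfection (Fulkerson, Lovász, Chvátal), equivalent to perfection of the induced proximity graph; but a proximity graph in a normed space of dimension at least $2$ need not be perfect --- a suitably scaled regular pentagon already induces a $C_5$ --- and this is exactly the obstacle absent when $d=1$, where $G$ is an interval graph. My plan is to show that \emph{extremality} excludes the bad configurations: if the support of an extreme point induced an odd hole, or the complement of one, among the near/far relations of $(\mathbb R^d,\|\cdot\|)$, then the tight clique constraints at that point would be linearly dependent in a way that leaves two independent mass- and clique-sum-preserving perturbations, contradicting extremality. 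The strong perfect graph theorem of Chudnovsky, Robertson, Seymour and Thomas then certifies perfection, and the polyhedral characterization delivers the required form of the extreme point. Excluding these odd holes and antiholes geometrically --- and tracking the $(1+o(1))$ faithfully through the discretization and the local-limit comparison --- is where I expect the real work to lie.
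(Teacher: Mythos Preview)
Your central claim --- that an extreme point of the polytope $\{w\in\Delta(V): w(K)\le 1/k\text{ for every clique }K\}$ is the uniform distribution on a single independent set of size $\ge k$ --- is false, and the paper itself supplies the counterexample in Section~\ref{sec.outline}: take the uniform distribution on the three vertices of an equilateral triangle of side~$1$ together with its centre. The distance graph is the star $K_{1,3}$, which is perfect; the four constraints $w(\text{centre})+w(v_i)=\tfrac12$ and $\sum w=1$ are linearly independent, so this point is a genuine vertex of the polytope for $k=2$, yet it is not uniform on any independent set. The Fulkerson--Lov\'asz--Chv\'atal polyhedral characterisation you invoke concerns the polytope $\{x\ge 0: x(K)\le 1\text{ for cliques }K\}$; once you intersect with the probability simplex and rescale the clique bound to $1/k$, new vertices appear even for perfect graphs. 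So neither ``extremality forces perfection of the induced subgraph'' nor ``perfection forces uniform-on-independent-set'' holds, and both are needed for your plan.

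The paper's route is different in a way that sidesteps exactly this obstacle. It never tries to characterise extreme points. Instead it uses Hal\'asz's anticoncentration theorem (Section~\ref{sec.halasz}) to show that if $\concdiam(\sum X_i,1)$ is as large as the target bound, then all but $o(n)$ of the $X_i$ can be shifted to lie within distance $<\tfrac18$ of a common line. For points that close to a line one proves \emph{geometrically} (Lemmas~\ref{lem.C5_complement}, \ref{lem.C5}) that the distance graph has no odd hole or antihole, and only then invokes the strong perfect graph theorem. Perfection is then used not via extreme points but via colouring: $\chi=\omega$ gives a partition of the support into at most $\alpha|S|$ colour classes, each a block (Lemma~\ref{lem.decomposition}), which is exactly the decomposition into possibly unequal-size blocks hinted at by the triangle example. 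That decomposition, combined with the B.T.K.\ chain argument and the one-dimensional sharp result, handles the ``close to a line'' summands; the remaining $o(n)$ summands are absorbed into the $(1+o(1))$ via a local-limit estimate. Your final local-limit/Fourier comparison step is present in the paper too, but only after this structural reduction has already replaced each $X_i$ by a mixture of blocks.
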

Note that even though $\conc$ and $\concdiam$ are defined using diameter, which depends
on the distance or the norm, our convention will be not to indicate 
this in the notation. For 1-dimensional random variables like $U_i$, 
the usual distance $d(x,y) = |x-y|$ will be assumed, unless mentioned otherwise. For claims about general
normed spaces with norm $\|\cdot\|$, like on the left of (\ref{eq.mainfront}),
we implicitly use the induced distance $d(x,y) = \|x-y\|$. 
Let us also note that the $o(1)$ term above is dependent on $k$, the norm $\|\cdot\|$ and the dimension $d$, which is not reflected in the notation. 

The inequality 
%in 
\eqref{eq.mainfront} is clearly optimal apart from the multiplicative factor $1+o(1)$. This is because $\concdiam(U_i,1)=\frac{1}{k}$ and we can attain the value $\concdiam(U_1+\ldots+U_n, 1)$ by $X_i$ that 
are uniformly distributed on the first $k$ integer multiples 
for some unit vector $v \in \mathbb{R}^d$. %In the special case $k=2$ it implies an asymptotically sharp version of the classical Littlewood-Offord problem.

The case $k=2$ in Theorem~\ref{thm.front} corresponds to the classical Littlewood-Offord problem.
Leader and Radcliffe~\cite{LR} conjectured that in this case the inequality \eqref{eq.mainfront} holds in arbitrary normed spaces without the multiplicative error factor (Conjecture 13, p. 101). %\m{Gal suformatuoti kaip "Conjecture (Leader and Radcliffe) ..."; "Question(Jones) ..."  kadangi daug kartų į tai referuosime? Galima naudoti ir nebūtinai formalų tekstą, tokį koks čia.}
They proved it for $d=1$. 
%Lee %nes kitų autorių vardų irgi neminime
Jones~\cite{jones} asked whether or not the inequality \eqref{eq.mainfront} without the error factor holds when $k>3$ 
%and the random variables $X_i$ each have the uniform distribution on some $k$-point set $A_i$
and each random variable $X_i$ has a uniform distribution on a $k$-point set $A_i$
in a Banach space such that the pairwise distances in $A_i$ are at least $1$ (Question 1 in Section~4). The latter is a special case of our setup. Consequently, Theorem~\ref{thm.front} establishes both problems asymptotically in finite-dimensional spaces.

The highlight of our work is the use of the strong perfect graph theorem that allows us to represent distributions of random vectors in a useful way so that existing combinatorial methods can be applied to the components of the decomposition. We believe that our approach can be useful in other contexts.

The paper is organized as follows. Notation and important definitions are presented and main results are formulated in Section~\ref{sec.results}. We outline the proof strategy in Section~\ref{sec.outline}. The subsequent Sections~\ref{sec.discrete}--\ref{sec.general} deal with different parts of the proof. 
%In Section~\ref{sec.counterexamples} we give a counterexample to a more general version of the conjecture by Leader and Radcliffe providing some indirect negative evidence to it. 

\section{Main results}
\label{sec.results}

For a finite multiset $A$ let $\nu(A)$ be the uniform measure on~$A$. Thus, for example, if $A=\{0,0,1\}$ then $\nu(A)$ is the distribution of a Bernoulli random variable with success probability $p=\frac{1}{3}$. We call a finite set of $k$ points in a metric space a $k$-\emph{block} if those points are at pairwise distance at least $1$. We shall sometimes refer to the latter sets as \emph{blocks} without specifying the particular parameter $k$. 
%under in underlying distance.

Let us define a parameterized family of probability measures that will serve as worst-case distributions in the forthcoming results. For $\alpha \in (0,1]$ set $k=\lfloor \alpha^{-1} \rfloor$ and take the unique $p=p(\alpha) \in (0,1]$
such that $\frac p k + \frac {1-p} {k+1} = \alpha$. Define a  measure on $\mathbb{R}$ by
\begin{equation}\label{eq.nu_star}
    \nuopt \alpha :=  p \nu(\{1,2,\dots,k\} - \tfrac {k+1} 2) + (1-p) \nu(\{1,2,\dots, k+1\} - \tfrac {k+2} 2).
\end{equation}

Note that in the special case $\alpha=\frac{1}{k}$ we have $p=1$ and so in view of \eqref{eq.nu_star} the measure $\nuopt \alpha$ is the uniform distribution on a symmetric arithmetic progression on $k$ points with difference $1$. It therefore has the same distribution as $U_i-\E U_i$ with $U_i$ as in Theorem~\ref{thm.front}. When $\alpha$ is not an inverse of an integer the measure $\nu^*_\alpha$ is a non-trivial convex combination of two distinct uniform distributions.

We can now state a more general version of Theorem~\ref{thm.front}, which is one of our main results.

\begin{theorem}\label{teor1}
    Fix an integer $d \ge 2$, and $\alpha \in (0,1]$.
    Let $X_1, \dots, X_n$ be independent random vectors in $\mathbb{R}^d$ endowed with a norm $||\cdot||$. Let $Y_1,\ldots, Y_n$ be i.i.d. random variables with distribution $\nuopt \alpha$. Assume that for each $i$
    \begin{align}\label{cond1}
    \concdiam(X_i, 1) \le \alpha.
    \end{align}
    We then have
    \begin{align}\label{eq.thm1}
        \concdiam(X_1 + \dots + X_n, 1) \le (1 + o(1)) \concdiam(Y_1 + \dots + Y_n, 1),
    \end{align}
    where the $o(1)$ term depends only on $\alpha$, $d$ and the underlying norm $||\cdot||$. 
\end{theorem}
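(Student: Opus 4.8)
The plan is to peel the statement down to a one-dimensional extremal problem whose answer is $\nuopt\alpha$, using the strong perfect graph theorem to carry out the decisive first step. (The case $\alpha=1$ is trivial; assume $\alpha<1$ and write $\nu(X)$ for the law of $X$.) First I would reduce to finitely supported laws: pushing each $X_i$ forward onto a sufficiently fine finite subset of $\mathbb R^d$ perturbs $\concdiam(X_i,1)$ and $\concdiam(X_1+\dots+X_n,1)$ by an amount negligible for fixed $d$, $\alpha$ and $\|\cdot\|$, hence absorbable into the $1+o(1)$ factor. Two elementary facts are then available: an open set of diameter $\le1$ meets a finitely-supported law in a set of pairwise distance $<1$, and it meets any block in at most one point.

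\emph{Block decomposition via perfect graphs — the crux.} For each $i$ form the graph $G_i$ on $\supp(X_i)$ joining two points at distance $<1$; its cliques are exactly the subsets of $\supp(X_i)$ of diameter $<1$, and one checks $\concdiam(X_i,1)=\max_K\nu(X_i)(K)$ over cliques $K$. Thus \eqref{cond1} says precisely that $\tfrac1\alpha\nu(X_i)$ lies in the clique polytope $\mathrm{QSTAB}(G_i)=\{x\ge0:\ x(K)\le1\text{ for every clique }K\}$. What I want is a representation
$$\nu(X_i)=\sum_j\lambda_{ij}\,\nu(B_{ij}),\qquad \lambda_{ij}\ge0,\quad\sum_j\lambda_{ij}=1,\quad\sum_j\frac{\lambda_{ij}}{|B_{ij}|}\le\alpha,$$
each $B_{ij}$ a block. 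When $G_i$ is perfect this is immediate: by the polyhedral characterization of perfect graphs (Chv\'atal, Fulkerson, Lov\'asz) one has $\mathrm{QSTAB}(G_i)=\mathrm{conv}\{\mathbb 1_I:\ I\text{ a block}\}$, and rewriting $\mathbb 1_I=|I|\,\nu(I)$ gives the claim with the budget inequality. The substance lies in the general case, and this is where the strong perfect graph theorem is used: an imperfect $G_i$ contains an induced odd hole or odd antihole, a configuration that cannot be placed on a line and therefore pins down a genuinely higher-dimensional piece of $\supp(X_i)$, which only spreads sums out further. I would exploit this to recover the displayed representation for every $X_i$ — the promised ``useful representation of the laws'' — the point being that the dimension an odd hole or odd antihole is forced to occupy offsets the extra ``budget'' it would otherwise demand. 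Making this quantitative is the step I expect to be the main obstacle.

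\emph{One dimension, then the extremal problem.} Conditioning on the chosen blocks, $X_1+\dots+X_n$ has the law of $\sum_i Z_i$ with $Z_i$ uniform on a random block $B_i$ of random size $\ell_i$, and $\mathbb E\sum_i 1/\ell_i\le n\alpha$; the basic combinatorial estimate for $\concdiam(\sum_i Z_i,1)$ rests on the fact that a block meets any set of diameter $\le1$ in at most one point. I would then bring in the comparison at the heart of Littlewood--Offord theory — in a normed space, uniform measure on a block of size $\ell$ is, up to $1+o(1)$, no worse for the anticoncentration of independent sums than uniform measure on $\{1,\dots,\ell\}\subset\mathbb R$ (the Leader--Radcliffe phenomenon, available with this error in general norms) — to reduce to $\mathbb R$, where $\sum_i Z_i$ is integer valued and $\concdiam(\sum_i Z_i,1)$ equals the largest coefficient of $\prod_i\tfrac1{\ell_i}\bigl(1+x+\dots+x^{\ell_i-1}\bigr)$. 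Finally I would maximize this over all size sequences with $\sum_i 1/\ell_i\le n\alpha$: a smoothing lemma (moving two sizes toward each other, or shrinking one, never decreases the largest coefficient and never increases $\sum_i 1/\ell_i$) pushes every $\ell_i$ into $\{k,k+1\}$ with $k=\lfloor1/\alpha\rfloor$, and the budget then forces the proportion of indices with $\ell_i=k$ to be $p(\alpha)$ — i.e.\ $n$ i.i.d.\ copies of $\nuopt\alpha$. A Hoeffding bound confines $\sum_i 1/\ell_i$ to $n\alpha+o(n)$ with probability $1-o(\concdiam(\sum_i Y_i,1))$, and the local central limit theorem makes the deterministic extremal configuration and the i.i.d.\ sum $\sum_i Y_i$ agree up to $1+o(1)$; these rounding and averaging losses are what the final $1+o(1)$ absorbs.

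\emph{Where the difficulty lies.} The hard part is the second step — forcing the block decomposition with the right budget through imperfect unit-distance graphs, that is, showing that the extra dimension an odd hole or odd antihole must occupy (by the strong perfect graph theorem) outweighs the budget it consumes. The first and third steps are comparatively routine, though the general-norm comparison in the third is itself delicate: its size-$2$ instance is the still-open exact Leader--Radcliffe conjecture, so it can only be used in the asymptotic form above.
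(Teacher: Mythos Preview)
Your plan has the right ingredients but the architecture is inverted, and this creates a genuine gap at exactly the point you flag as hardest. You propose to decompose \emph{every} $X_i$ into blocks with budget $\sum_j \lambda_{ij}/|B_{ij}|\le\alpha$, handling imperfect distance graphs by arguing that an odd hole or antihole ``pins down a genuinely higher-dimensional piece'' whose dispersion compensates for the budget overrun. But no such compensation mechanism is established, and it is not clear how to make one: a single $X_i$ with an odd-hole distance graph contributes nothing quantifiable to the anticoncentration of the \emph{sum} that could be traded against a bad decomposition of \emph{that} $X_i$. The paper sidesteps this entirely. It never decomposes a general $X_i$; instead it uses Hal\'asz's concentration theorem as a dichotomy: either $\concdiam(\sum X_i,1)$ is already below the target, or there is a line $L$ such that all but $o(n)$ of the $X_i$ (after shifting) are supported within distance $\frac18$ of $L$ with probability $1-o(1)$. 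Only for those near-line $X_i$ does the perfect-graph decomposition run, and there it runs cleanly because the distance graph of any point set within $\frac18$ of a line \emph{is} perfect (this is where the strong perfect graph theorem enters: one checks such a graph has no induced odd hole or antihole). The few bad $X_i$ are simply discarded, costing a $1+o(1)$ factor via the local-limit estimate.

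There is a second, subtler gap in your third step. You invoke ``the Leader--Radcliffe phenomenon, available with this error in general norms'' to compare a sum of block-uniforms in $(\mathbb R^d,\|\cdot\|)$ with a sum of arithmetic-progression uniforms in $\mathbb R$. But that comparison is essentially the content of the theorem you are proving; it is not available as an input. The paper obtains it \emph{only for blocks close to a line}, and there by an exact (not asymptotic) argument: such blocks satisfy the B.T.K.\ chain condition via a supporting functional, so Jones's symmetric-chain decomposition gives $\concdiam(\sum Z_i,1)\le t(k_1^{-1},\dots,k_n^{-1})$ exactly. After that, the one-dimensional extremal step is handled by citing the sharp $d=1$ result directly rather than by a smoothing argument. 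So both of your non-routine steps are replaced in the paper by the single move ``reduce to near-a-line via Hal\'asz, then everything is exact''.
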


As mentioned above, in the one-dimensional case the sharp inequality (without the $1+o(1)$ multiplicative factor) was proved by Leader and Radcliffe~\cite{LR} when $\alpha$ is an inverse of an integer. Answering a question of theirs it was extended to $d=1$ and all values of $\alpha$ by the first author in \cite{disertacija} and \cite{tj}.

There are essential differences between the one-dimensional and the higher-dimensional case. This is due to the fact that the class of probability distributions satisfying the condition \eqref{cond1} when $d>1$ is in some sense much richer in contrast to the case $d=1$, see Section~\ref{sec.outline}. 
%For example, in the case $d=1$ and $\alpha=\frac 1 2$ all distributions of interest lie in the closure of the convex hull of uniform distributions on two points (see \cite{tj}). 
The crucial tool to analyze the situation in high dimensions, perhaps surprisingly, comes from structural graph theory. To be more precise, we use the strong perfect graph theorem famously proved by Chudnovsky, Seymour, Robertson and Thomas~\cite{strong_perfect}. We discuss this in greater detail in Section~\ref{sec.outline}.

Furthermore, our methods allow us to obtain the optimal inequality in greater generality provided all distributions are close to a single line.

Let $\mathcal{S}$ be a normed space with norm $\|\cdot\|$. The norm induces a distance $d(x,y) = \|x-y\|$. For $x \in \mathcal{S}$ and $S$ a set in $\mathcal{S}$ we write $d(x, S) = \inf_{y \in S} d(x,y)$.  A line in $\mathcal{S}$ is a set $\{t \upsilon + b: t \in\mathbb{R}\}$ where $\upsilon,b \in \mathcal{S}$ and $\|\upsilon\|=1$. %The same applies to distance graphs. 

\begin{theorem}\label{close_to_line}
   % Let $\mathcal{S}$ be a normed space over the reals with norm $\|\cdot\|$.
    Let $X_1, \dots, X_n$ be independent random vectors with values in a normed space over the reals $\mathcal{S}$ with norm $\|\cdot\|$
    and let $\alpha_1, \dots, \alpha_n \in (0, 1]$.
    Assume that $\concdiam(X_1,1) \le \alpha_1, \dots, \concdiam(X_n, 1) \le \alpha_n$ and that there is a line $L$ in $\mathcal{S}$ so that $\sup_{\omega} d(X_i(\omega),L) < \frac 1 8$ ($ < \frac {\sqrt 3} 4$ if $\mathcal{S}$ is Hilbert) for each $i$. If either
    \begin{enumerate}[i)]
    \item the random vectors $X_i$ have countable supports; or
    \item $\mathcal{S}$ is complete and separable; %Banach space; % nenoriu galvoti apie normuotas erdves virš kompleksinių, complete tikriausiai nebūtina 
    \end{enumerate}
   we have that
    \[
        \concdiam(X_1+\dots+X_n, 1) \le \concdiam(Y_1 + \dots + Y_n, 1) = \pr(Y_1 + \dots + Y_n \in \{0, \frac 1 2\}),
    \]
    where $Y_1, \dots, Y_n$ are independent and $Y_i$ has distribution $\nuopt {\alpha_i}$ defined in (\ref{eq.nu_star}).
\end{theorem}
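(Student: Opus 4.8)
The plan is to reduce the statement to a purely one-dimensional anticoncentration inequality, for which the sharp bound of Leader--Radcliffe (and its extension by the first author) can be invoked, at the cost of controlling the error introduced by projecting onto the line $L$. Write $L = \{t\upsilon + b : t \in \mathbb{R}\}$ and let $\pi : \mathcal{S} \to \mathbb{R}$ be the affine map sending $x$ to the coordinate of its nearest point on $L$, i.e. $\pi(x) = \langle \cdot\rangle$-type projection in the Hilbert case and a suitable $1$-Lipschitz surrogate in the general normed case (one checks $\pi$ can be taken $1$-Lipschitz onto $\mathbb{R}$ using that $\upsilon$ is a unit vector). Set $Z_i = \pi(X_i) \in \mathbb{R}$ and $r_i(\omega) = X_i(\omega) - (\pi(X_i(\omega))\upsilon + b)$, the ``transverse remainder'', so that $\|r_i\| < \tfrac18$ (resp. $< \tfrac{\sqrt3}{4}$) pointwise. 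The first key step is to show that the one-dimensional projections still satisfy the hypothesis: $\concdiam(Z_i, 1) \le \alpha_i$. This is where the explicit bound on the transverse displacement is used — if two values of $X_i$ project to points of $L$ within distance $<1$ of each other, then because the transverse parts are small, the original points are at distance $<1$ in $\mathcal{S}$ (this is a short triangle-inequality computation in the normed case, and a Pythagorean computation in the Hilbert case, which is exactly what makes the constant $\tfrac{\sqrt3}{4}$ admissible there: $\sqrt{1^2 - (2\cdot\tfrac{\sqrt3}{4})^2} > 0$ fails, so one has to be a bit more careful — the right statement is that a set of $X_i$-values projecting into a length-$<1$ interval has diameter $<1$, hence probability $\le \concdiam(X_i,1) \le \alpha_i$).

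The second key step is to transfer an event of small diameter for the \emph{sum} $X_1 + \dots + X_n$ to an event of small diameter for $Z_1 + \dots + Z_n$. If $S \subseteq \mathcal{S}$ has $\diam(S) \le 1$ and $x_1 + \dots + x_n \in S$ with $x_i \in \supp X_i$, then $\pi(x_1) + \dots + \pi(x_n)$ lies in an interval whose length is controlled by $1$ plus a term coming from the accumulated transverse parts — but crucially the transverse parts do \emph{not} accumulate in a way that depends on $n$, because $\sum r_i$ being small is \emph{not} automatic. This is the subtlety: one must instead argue conditionally. Fix the ``direction data'' and observe that for any fixed interval $I$ of length $1$ on $L$, the set of $(x_1,\dots,x_n)$ with $\sum x_i \in S$ maps under $\pi$ into a set that, together with the near-line hypothesis, forces $\sum \pi(x_i)$ into a length-$1$ interval \emph{after} one pins down the total transverse displacement; integrating over the (at most countably many, by hypothesis (i), or measurably parametrized, by (ii)) possibilities and taking a supremum recovers a genuine $\concdiam(Z_1+\dots+Z_n,1)$ on the right. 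I would organize this as: condition on the event that the sum lands in a fixed small set $S$, decompose according to the transverse coordinate, and use that along each fibre the problem is genuinely one-dimensional.

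The third step is the one-dimensional input: by the Leader--Radcliffe theorem (proved in \cite{LR} for $\alpha_i$ an inverse of an integer, extended in \cite{disertacija,tj} to all $\alpha_i \in (0,1]$), since $\concdiam(Z_i,1)\le \alpha_i$ we have $\concdiam(Z_1 + \dots + Z_n, 1) \le \concdiam(Y_1 + \dots + Y_n, 1)$ with $Y_i \sim \nuopt{\alpha_i}$, with \emph{no} error factor; and by the structure of $\nuopt{\alpha_i}$ (each supported on consecutive half-integers) the optimal small set for $Y_1 + \dots + Y_n$ can be taken to be the two-point set $\{0, \tfrac12\}$, giving the stated closed form $\pr(Y_1+\dots+Y_n \in \{0,\tfrac12\})$. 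Chaining the three steps yields $\concdiam(X_1+\dots+X_n,1) \le \concdiam(Z_1+\dots+Z_n,1) \le \concdiam(Y_1+\dots+Y_n,1)$.

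The main obstacle I anticipate is the second step: making the projection argument genuinely lose nothing, rather than picking up an $n$-dependent error from the transverse coordinates — this is presumably why the theorem needs the \emph{uniform} pointwise bound $\sup_\omega d(X_i(\omega),L) < \tfrac18$ rather than, say, a bound in expectation, and why the hypotheses (i)/(ii) (countable support or Polish space) appear: they are exactly what is needed to carry out the fibrewise/conditioning argument rigorously and to know that the relevant suprema over small sets $S$ and the relevant measurable selections behave well. A secondary technical point is verifying the sharp constants $\tfrac18$ and $\tfrac{\sqrt3}{4}$ are the correct thresholds for the diameter-transfer inequalities in general normed versus Hilbert spaces; these I would isolate as a short geometric lemma proved before the main argument.
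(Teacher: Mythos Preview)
Your first key step contains a genuine error of direction that breaks the whole argument. A $1$-Lipschitz projection $\pi$ onto $L$ satisfies $|\pi(x)-\pi(y)|\le\|x-y\|$, so diameters can only \emph{decrease} under $\pi$; consequently $\concdiam(Z_i,1)\ge\concdiam(X_i,1)$, not $\le$. Your justification (``a set of $X_i$-values projecting into a length-$<1$ interval has diameter $<1$'') is false: the preimage of a short interval, intersected with the strip, can have diameter up to roughly $1+2\cdot\tfrac18$ (or $\sqrt{1+(2\cdot\tfrac{\sqrt3}4)^2}$ in the Hilbert case). Concretely, in the Euclidean plane with $L$ the $x$-axis, let $X_i$ be uniform on $\{(0,\pm0.4),(0.9,\pm0.4)\}$; then $\concdiam(X_i,1)=\tfrac12$ (no three of the four points fit in a diameter-$<1$ set because of the diagonals), yet $Z_i=\pi(X_i)$ is supported on $\{0,0.9\}$ and $\concdiam(Z_i,1)=1$. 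So you cannot carry the hypothesis $\concdiam(Z_i,1)\le\alpha_i$ through the projection, and the one-dimensional theorem of \cite{LR,tj} gives you nothing.

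The paper's route is structurally different and is precisely designed to avoid this loss. After a discrete approximation (this is where hypotheses (i)/(ii) actually enter), each $X_i$ is uniform on a finite multiset $S_i$ in the strip. The close-to-line hypothesis is used not to project but to show that the distance graph of $S_i$ is \emph{perfect} (via the strong perfect graph theorem); perfection gives $\chi=\omega$, and since $\omega(G_i)\le\alpha_i|S_i|$ one obtains a decomposition of $\nu(S_i)$ into a mixture of uniform measures on at most $\alpha_i|S_i|$ blocks (colour classes), each a set with pairwise distances $\ge1$. For each combination of blocks the close-to-line hypothesis is used a second time to run the B.T.K.\ chain argument of Jones, bounding the concentration by a one-dimensional quantity; only then does one land on genuinely one-dimensional mixtures $\mu_i^1$ with $\concdiam(\mu_i^1,1)\le\alpha_i+\epsilon$, to which the sharp result of \cite{tj} applies. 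Your third step (the one-dimensional input) is indeed the endpoint, but the bridge to it is the perfect-graph decomposition and the B.T.K.\ chains, not a Lipschitz projection.
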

The latter inequality is optimal and in the case $\alpha_i=\frac{1}{2}$ it gives the desired result in the Leader-Radcliffe conjecture and when $\alpha_i=\frac 1 k$ it also answers the discussed question by Jones in this restricted setting. %The only reason we could not obtain this inequality for arbitrary normed spaces is due to the fact that we use a discretization procedure that has been so far only established with the additional requirement of separability. 

Finally, our methods allow us to treat the %more 
general situation with a non-uniform condition in \eqref{cond1} %in the setting 
when a certain kind of a local limit theorem for the extremal distributions holds.
We shall formulate it in the language of triangular arrays. 
The interested reader can find a general non-asymptotic version of this result in Section~\ref{sec.general}, see Theorem~\ref{thm.main}.
\begin{theorem}\label{local}
    Fix an integer $d \ge 2$, and a norm $\|\cdot\|$ on $\mathbb{R}^d$. Then there is a constant $C = C(d, \|\cdot\|)$ such that the following holds.

    For each $l \in \{1,2,\dots,\}$ let  $X_{l, 1}, \dots, X_{l, n_l}$
    be a sequence of independent random variables in $(\mathbb{R}^d, \|\cdot\|)$ such that $\concdiam(X_{l, 1}, 1) \le \alpha_{l, 1}, \dots, \concdiam(X_{l, n_l}, 1) \le \alpha_{l, n_l}$ and  $1 \ge \alpha_{l,n_l} \ge \dots \ge \alpha_{l, 1} > 0$.
    Let $Y_{l, 1}, \dots, Y_{l, n_l}$ be independent random variables with distributions $\nuopt {\alpha_{l, 1}}$, $\dots$, $\nuopt {\alpha_{l, n_l}}$ respectively.
    Define a function $\xi_{d}:\, \mathbb{R}\rightarrow \mathbb{R}$ by setting $\xi_{d}(x)=x$ if $d=2$ and $\xi_{d}(x)=1$ otherwise. Denote
    \[
        \bar{\alpha}_l = n_l^{-1} \sum_{i=1}^{n_l} \alpha_{l, i} \quad \mbox{and}\quad V^*_{l, t} = \sum_{i=1}^{t} \E Y_{l, i}^2.
    \]
    Suppose  $V^*_{l, n_l} \to \infty \mbox{ as }l\to \infty$, 
    \begin{align*}
        %V^*_{l, n_l} = o(\bar{\alpha}_l^2 n_l^2),
        &\xi_d(\bar{\alpha}_l)^2 V^*_{l, n_l} = o(n_l^2),
        %\quad
        & &V^*_{l, n_l} \le e^{\frac {C^2} {36} n_l^{\frac 1 2}},
 %\quad 
 \\
        &  \sum_{i=1}^{n_l} \E |Y_{l, i}|^3 = o( (V^*_{l, n_l})^{\frac 3 2}), 
   % \]
   % \[
 %   \quad
        & &
        \xi_d(\bar{\alpha}) \bar{\alpha}_l^2 n_l = o((V^*_{l, n_l})^{3/2})
    \end{align*}
    and
    \[
        \lim_{\epsilon \downarrow 0} \lim_{l \to \infty} \frac {V^*_{l, \lceil n_l (1-\epsilon) \rceil}}{V^*_{l,n_l}} = 1.
    \]
    Then
    \begin{align}
        \concdiam(X_{l, 1} + \dots + X_{l, n_l}, 1)  & \le \concdiam(Y_{l, 1} + \dots + Y_{l, n_l}, 1) (1 + o(1))  \nonumber
        \\ & = \frac 1 {\sqrt{2 \pi V^*_{l, n_l}}} (1 + o(1)) \label{eq.sharpb}
      \\  &\le 
        \sqrt {\frac 6 \pi}  \frac {\bar{\alpha}_l} {\sqrt{ (1 - \bar{\alpha}_l^2) n_l}} (1 + o(1)). \label{eq.crudeb}
    \end{align}
%where the $o(1)$ term is dependent on the underlying norm and $d$.

\end{theorem}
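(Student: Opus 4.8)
The plan is to combine three largely independent pieces: the non\nobreakdash-asymptotic comparison inequality of Theorem~\ref{thm.main}, a local central limit theorem for the extremal sums $Y_{l,1}+\dots+Y_{l,n_l}$, and a convexity estimate for variances.

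First I would feed the triangular array into Theorem~\ref{thm.main}, obtaining an inequality
\[
 \concdiam(X_{l,1}+\dots+X_{l,n_l},1)\ \le\ \concdiam(Y_{l,1}+\dots+Y_{l,n_l},1)+R_l ,
\]
where $R_l$ is an explicit remainder built from $\sum_i\E|Y_{l,i}|^3$, $V^*_{l,n_l}$, $n_l$, $\bar\alpha_l$, the constant $C=C(d,\|\cdot\|)$, and the dimension through $\xi_d$. Each of the displayed hypotheses of Theorem~\ref{local} — namely $\xi_d(\bar\alpha_l)^2V^*_{l,n_l}=o(n_l^2)$, $V^*_{l,n_l}\le e^{C^2n_l^{1/2}/36}$, $\sum_i\E|Y_{l,i}|^3=o((V^*_{l,n_l})^{3/2})$, and $\xi_d(\bar\alpha_l)\bar\alpha_l^2n_l=o((V^*_{l,n_l})^{3/2})$ — is designed to force one term of $R_l$ to be $o\big((V^*_{l,n_l})^{-1/2}\big)$, so that $R_l=o\big((V^*_{l,n_l})^{-1/2}\big)$ altogether. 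Since the second ingredient below shows that $\concdiam(Y_{l,1}+\dots+Y_{l,n_l},1)$ is of order $(V^*_{l,n_l})^{-1/2}$, dividing the displayed inequality by it gives $\concdiam(X_{l,1}+\dots+X_{l,n_l},1)\le(1+o(1))\concdiam(Y_{l,1}+\dots+Y_{l,n_l},1)$.

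For the second ingredient — the middle equality in \eqref{eq.sharpb} — note that each $\nuopt{\alpha}$ is a mixture of uniform measures on arithmetic progressions of common difference $1$ centred at the origin, so every $Y_{l,i}$ is supported in $\tfrac12\mathbb{Z}$ and the partial sum lies on $\tfrac12\mathbb{Z}$ with some span $h_l\in\{\tfrac12,1\}$. Since an open subset of $\mathbb{R}$ of diameter at most $1$ is contained in an open interval of length at most $1$ and hence meets $\tfrac12\mathbb{Z}$ in at most $1/h_l$ points, and since the sum lies on a line so that Theorem~\ref{close_to_line} (applied with $X_i=Y_i$) identifies the left\nobreakdash-hand side below exactly,
\[
 \pr\big(Y_{l,1}+\dots+Y_{l,n_l}\in\{0,\tfrac12\}\big)\ \le\ \concdiam(Y_{l,1}+\dots+Y_{l,n_l},1)\ \le\ \tfrac1{h_l}\max_{x}\pr\big(Y_{l,1}+\dots+Y_{l,n_l}=x\big).
\]
It then suffices to prove a Gnedenko\nobreakdash-type local limit theorem for the lattice\nobreakdash-valued array, namely $\pr(Y_{l,1}+\dots+Y_{l,n_l}=x)=\tfrac{h_l}{\sqrt{2\pi V^*_{l,n_l}}}(1+o(1))$ uniformly over $x$ on the span lattice, which under the Lyapunov bound $\sum_i\E|Y_{l,i}|^3=o((V^*_{l,n_l})^{3/2})$ follows by the standard route: Taylor expansion of $\prod_i\widehat{\nuopt{\alpha_{l,i}}}$ near $0$ together with a uniform bound on the product away from the lattice frequencies. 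The final hypothesis $\lim_{\epsilon\downarrow0}\lim_{l\to\infty}V^*_{l,\lceil n_l(1-\epsilon)\rceil}/V^*_{l,n_l}=1$ is used to pin down $h_l$ and to run a conditioning step — freeze the last $\lceil n_l\epsilon\rceil$ summands, whose contribution to the variance is asymptotically negligible, and apply the estimate to the remaining ones, whose variance is still $(1+o(1))V^*_{l,n_l}$ — which is needed precisely because the $Y_{l,i}$ are not identically distributed and may live on different cosets of $\tfrac12\mathbb{Z}$. Keeping everything uniform over this non\nobreakdash-identical array and controlling the span cleanly is the step I expect to be the main obstacle.

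Finally, \eqref{eq.crudeb} follows from \eqref{eq.sharpb} and the bound $V^*_{l,n_l}=\sum_{i=1}^{n_l}\E Y_{l,i}^2\ge\tfrac1{12}\sum_{i=1}^{n_l}(\alpha_{l,i}^{-2}-1)$, which rests on the one\nobreakdash-variable inequality $\operatorname{Var}(\nuopt{\alpha})\ge\frac{1-\alpha^2}{12\alpha^2}$ for $\alpha\in(0,1]$ (an equality at $\alpha=1/k$; on each interval $(\tfrac1{k+1},\tfrac1k]$ the left side is affine in $\alpha$ and agrees at the endpoints with the convex right side, so it dominates there). As $\alpha\mapsto\alpha^{-2}-1$ is convex on $(0,1]$, Jensen's inequality gives $\tfrac1{n_l}\sum_i(\alpha_{l,i}^{-2}-1)\ge\bar\alpha_l^{-2}-1$, hence $V^*_{l,n_l}\ge\tfrac{n_l(1-\bar\alpha_l^2)}{12\,\bar\alpha_l^2}$; substituting this into $(2\pi V^*_{l,n_l})^{-1/2}$ yields $\sqrt{6/\pi}\,\bar\alpha_l/\sqrt{(1-\bar\alpha_l^2)n_l}$, as required.
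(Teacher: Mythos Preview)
Your high-level plan matches the paper's: apply Theorem~\ref{thm.main}, identify the extremal concentration with $(2\pi V^*_{l,n_l})^{-1/2}$ via a local-limit result, then Jensen for \eqref{eq.crudeb}. Your mapping of the four displayed hypotheses to the error terms of Theorem~\ref{thm.main} is essentially right, and your derivation of \eqref{eq.crudeb} is exactly Remark~\ref{rmk.explicit_bound}. One quibble: Theorem~\ref{thm.main} does not output $\concdiam(\sum X)\le\concdiam(\sum Y)+R_l$; its leading term is $(2\pi V^*_{n-\lfloor m\rfloor})^{-1/2}$. The last hypothesis of Theorem~\ref{local} is what supplies condition~\eqref{eq.cVstar2} and then lets one pass from $V^*_{n-\lfloor m\rfloor}$ to $V^*_n$ after checking $m=o(n)$; it is not a conditioning device for the local limit theorem as you describe.

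The genuine gap is in your second ingredient. The uniform Gnedenko statement $\pr(\sum_i Y_{l,i}=x)=h_l(2\pi V^*_{l,n_l})^{-1/2}(1+o(1))$ on the span-$h_l$ lattice does \emph{not} follow from the Lyapunov condition in the non-i.i.d.\ setting, and it is actually false in the present situation: if all but one of the $\alpha_{l,i}^{-1}$ are integers, the technical span is $h_l=\tfrac12$, yet the mass is essentially confined to a single coset of $\mathbb{Z}$ inside $\tfrac12\mathbb{Z}$, so the point probabilities are $\sim(2\pi V^*)^{-1/2}$ on that coset and $o((V^*)^{-1/2})$ on the other. Your sandwich upper bound $\tfrac{1}{h_l}\max_x\pr(\sum Y=x)$ then overshoots by a factor of $2$. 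Freezing the last $\epsilon n_l$ summands does nothing to repair this, since the remaining partial sum has exactly the same span ambiguity. The paper acknowledges that standard local limit theorems are hard to apply here and proves Lemma~\ref{lem.clt} by a different route: condition on the random set $A=\{i:Y_i\in\tfrac12+\mathbb{Z}\}$, so that the conditional sum $S_{A'}$ lives on a single span-$1$ coset and, by the Keilson--Gerber theorem, is log-concave and unimodal there; Berry--Esseen for $S_{A'}$ combined with a telescoping use of log-concavity (see \eqref{eq.logconct}) bounds the mode probability directly, and averaging over $A'$ yields $\pr(S\in\{0,\tfrac12\})=(1\pm\epsilon')(2\pi V^*)^{-1/2}$ without ever needing a uniform local limit. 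This log-concavity-plus-conditioning idea is the key step you are missing.
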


%\textbf{Remark.} Notice that the bound in \eqref{eq.sharpb} is like what we would expect from a Gaussian distribution with variance $V^*_{l, n_l}$. Yet it is not obvious that this should be the case - \m{smulkmena, bet man lyg ir : įprastesnis?}in general the worst-case sequence $Y_{l, 1} + \dots + Y_{l, n_l}$ does not satisfy the local limit theorem, for instance, if $\alpha_{l,i}$ take only the values $\frac{1}{k}$ and $\frac{1}{k+1}$ with an equal split, the two types of extremal distributions lie on different lattices. Also, the value $\conc(Y_{l, 1} + \dots + Y_{l, n_l}, 1)$ is realized by the two largest point probabilities of $Y_{l, 1} + \dots + Y_{l, n_l}$ instead of just one (in which case the asymptotics of (\ref{eq.sharpb}) would anticipated).

Notice that the bound in (\ref{eq.sharpb}) is like what we would expect from a Gaussian distribution with variance $V^*_{l, n_l}$. 
In some simple cases (\ref{eq.sharpb}) follows by a standard local limit theorem. However, when, for example, part of $\alpha_{l,i}^{-1}$ are odd integers and the remaining ones are even integers, the distribution of  $Y_{l, 1} + \dots + Y_{l, n_l}$ is a mixture of distributions on two different lattices and we need a different argument (see Lemma~\ref{lem.clt} below). 
%While our conditions are most likely too crude, they do prevent the case where most of the $\alpha_{l,i}$ are equal to 1 and in particular the counterexamples similar to the ones provided in Section~\ref{sec.counterexamples}. 
%Additional complications may arise when many of the i$\alpha_i$ are close to 1 (see Section~\ref{sec.counterexamples}) or when the variance of the sum is dominated by the variance of a few terms.

Write $a_n \gg b_n$ if $b_n = o(a_n)$ as $n \to \infty$.
\begin{remark}\label{rmk.cormain} In the case of the uniform bound for all random variables, i.e. $\alpha_{i,l} = \bar{\alpha}_l$ for all $i\in\{1, \dots, n_l\}$ the conditions of Theorem~\ref{local} simplify to %\m{patikrinti pirmą sąlygą}
    \begin{equation*}\label{eq.rmkcormain}
        %this condition is from exp(-epsilon n) << 1/sqrt(V*)
        % \bar{\alpha}_l \gg n_l^{- \frac 1 4} 
        %no! this is always satisfied!
        %this condition is from  \xi_d(\bar{\alpha}_l)^2 V_{l,n_l}^* = o(n_l^2) 
        \bar{\alpha}_l \gg n_l^{- \frac 1 2} \quad \mbox{and} \quad 
        %this condition is from the gamma condition
        1 - \bar{\alpha}_l \gg n_l^{-\frac 1 3}.
    \end{equation*}
    (When $d=2$ the second condition can be weakened to $\bar{\alpha}_l \ge e^{- \frac {C^2} {71} \sqrt{n}}$.)
\end{remark}

It would be tempting to drop the $(1+o(1))$ factors and the assumptions of our results and %generalize the conjecture of Leader-Radcliffe, the question of %Lee
%Jones and our results as follows.
formulate the following.
\begin{conjecture}\label{conj.general}
    Let $X_1, \dots, X_n$ be independent random vectors with values in a normed space $\mathcal{S}$.
    Suppose $\concdiam(X_1,1) \le \alpha_1, \dots, \concdiam(X_n, 1) \le \alpha_n$, where $\alpha_1, \dots, \alpha_n \in (0,1]$. Then
    \[
        \concdiam(X_1+\dots+X_n, 1) \le \pr(Y_1 + \dots + Y_n \in \{0, \frac 1 2\})
    \]
    where $Y_1, \dots, Y_n$ are independent and $Y_i$ has distribution $\nuopt {\alpha_i}$ defined in (\ref{eq.nu_star}).
\end{conjecture}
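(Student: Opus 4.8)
The plan is to reduce the conjecture, by approximation, to the case of finitely supported random vectors in $(\mathbb R^d,\|\cdot\|)$, and then to attack the resulting \emph{sharp} inequality by retaining the strong-perfect-graph decomposition that underlies Theorem~\ref{teor1} while replacing the single step in which a multiplicative loss is incurred.

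First I would carry out the reductions. Fix $\varepsilon>0$ and an open set $S$ of diameter at most $1$ with $\pr(X_1+\dots+X_n\in S)\ge \concdiam(X_1+\dots+X_n,1)-\varepsilon$. Under mild regularity of the laws (for instance the hypotheses of Theorem~\ref{close_to_line}) one can inner-approximate each $X_i$ by a finitely supported $X_i'$ with $\concdiam(X_i',1)\le\alpha_i$ and $\pr(X_1'+\dots+X_n'\in S)$ arbitrarily close to $\pr(X_1+\dots+X_n\in S)$; the supports then span a finite-dimensional subspace, so it suffices to prove the conjecture for finitely supported $X_i$ in $\mathbb R^d$. In that regime the conjecture is exactly the assertion that the $(1+o(1))$ factor and the auxiliary hypotheses may be deleted from Theorems~\ref{teor1} and~\ref{local}, with non-uniform thresholds $\alpha_i$ allowed. (The fully general normed-space statement also has to accommodate non-tight laws, which is an additional and not entirely routine point.)

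Next I would localize the source of the $(1+o(1))$. In the proof of Theorem~\ref{teor1} the strong perfect graph theorem is used to express the law of each $X_i$ as a convex combination of laws supported on blocks, after which combinatorial Littlewood--Offord estimates are applied to the pieces and a local limit theorem compares $\sum X_i$ with $\sum Y_i$; the loss originates in the asymptotic local-limit comparison rather than in the decomposition. To obtain the exact inequality I would instead aim for a finite-$n$, entropy-free comparison $\concdiam(X_1+\dots+X_n,1)\le\concdiam(Y_1+\dots+Y_n,1)$ passing directly through the block decomposition: represent $\pr(X_1+\dots+X_n\in S^*)$ for an extremal set $S^*$ as a weighted sum over a suitable antichain-type family in a product structure built from the blocks, and then compress each coordinate towards the one-dimensional extremal configuration, using the sharp one-dimensional theorem of Leader--Radcliffe and Juškevičius (\cite{LR}, \cite{tj}) as the base case and Theorem~\ref{close_to_line} on the sub-collections that can be aligned on a common line (one might first settle the Hilbert-space case, where the alignment tolerance in Theorem~\ref{close_to_line} is larger).

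The hard part --- and the reason this remains a conjecture --- is precisely this comparison in dimension $d\ge 2$. As the authors emphasise, the family of distributions with $\concdiam(\cdot,1)\le\alpha$ is far richer than in dimension one, and the one-dimensional compression arguments do not transfer: the perfect-graph decomposition supplies structure but no canonical direction along which to compress, and the blocks produced for different indices $i$ (and even within a single $i$) generally lie on distinct lines, whereas the sharp summation argument behind Theorem~\ref{close_to_line} crucially exploits a single common line. Making the comparison exact across such misaligned pieces, or replacing the local-limit step by an argument that is sharp at finite $n$, appears to need a genuinely new idea beyond the present techniques.
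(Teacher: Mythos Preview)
Your proposal attempts to prove Conjecture~\ref{conj.general}, but the paper does not prove this conjecture --- it \emph{disproves} it. Immediately after stating the conjecture, the authors exhibit an explicit counterexample in the Euclidean plane with $n=2$: take $X_1$ and $X_2$ uniform on the vertices of a regular octagon of radius $(\sqrt{2}+2)^{-1/2}$. Then $\concdiam(X_1,1)=\concdiam(X_1+X_2,1)=\tfrac{3}{8}$, whereas for $Y_1,Y_2\sim\nuopt{\alpha}$ with $\alpha\in[\tfrac{3}{8},\tfrac{5}{12})$ one has $\concdiam(Y_1+Y_2,1)<\tfrac{3}{8}$. So the inequality in the conjecture fails already for $n=2$ and $\alpha_1=\alpha_2=\tfrac{3}{8}$.

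Consequently, your entire plan is aimed at a false target: no amount of sharpening the block decomposition, replacing the local-limit step, or devising a compression argument can succeed, because the desired inequality is simply not true in general. Your diagnosis that ``the loss originates in the asymptotic local-limit comparison rather than in the decomposition'' is therefore incorrect --- the $(1+o(1))$ factor in Theorems~\ref{teor1} and~\ref{local} is not merely an artefact of the method but is genuinely necessary for non-integer $\alpha_i^{-1}$. The paper notes that the conjecture might still hold when all $\alpha_i^{-1}$ are integers (in particular, the Leader--Radcliffe case $\alpha_i=\tfrac{1}{2}$ and the Jones case $\alpha_i=\tfrac{1}{k}$ remain open), so if you wish to pursue a sharp inequality, you should restrict to that setting from the outset.
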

In particular, the case $\alpha_1=\dots=\alpha_n=\frac 1 2$ corresponds exactly to the conjecture by Leader and Radcliffe and $\alpha_1=\dots=\alpha_n=\frac 1 k$ is a generalization of the question of Jones. 

We now show that this natural conjecture is false, even for $n=2$ and $\alpha_1=\alpha_2$.
A counterexample for 2-dimensional Euclidean space is when both $X_1$ and $X_2$ are chosen uniformly at random from the vertices of a regular octagon with radius $(\sqrt{2} + 2)^{-\frac 1 2} \approx 0.5412$. Then $\conc(X_1 + X_2, 1) = \concdiam(X_1+X_2, 1) = \concdiam(X_1, 1) = \frac 3 8$. This value is a sharp upper bound when $\alpha_1=\alpha_2=\frac 3 8$ since $\concdiam(X_1+X_2,1)\leq \concdiam(X_1,1)$. It is also strictly larger than $\concdiam(Y_1+Y_2, 1)$ for $Y_1, Y_2 \sim \nuopt \alpha$ and $\alpha \in [\frac 3 8, \frac 5 {12})$. Non-discrete counterexamples may be obtained when $X_1$ and $X_2$ are uniformly distributed on a circle in $\mathbb{R}^2$ of an appropriate radius.

Although we have just demonstrated that the worst case $1$-dimensional examples do not in general maximize $\concdiam(\cdot, 1)$,
they are still optimal asymptotically by Theorem~\ref{local} at least when the conditions of that theorem hold.
%, i.e., they are best possible up to a multiplicative factor $1+o(1)$.

We can trivially increase $n$ in our counterexamples by appending any number of $\alpha_i$s that are close to 1. But we have neither non-trivial constructions for $n>2$ nor ones where each $\alpha_i^{-1}$ is integer. So Conjecture~\ref{conj.general} might still be true when all $\alpha_i^{-1}$ are integers. 

%While the conditions of Theorem~\ref{local} are most likely too crude, the counterexamples show that it is necessary to at least prevent the case where all but two of $\alpha_{l,i}$s are close to 1.

%\medskip

\section{Difficulties, the approach and an outline of the proofs}
\label{sec.outline}

Let $X_{1},..., X_{n}$ be independent random vectors in a normed space $\mathcal{S}$ equipped with a norm $||\cdot||$. Suppose that
\begin{equation}\label{concentr}
\concdiam (X_i, 1)\leq \alpha_{i}, \quad i \in \{1, \dots, n\}.
\end{equation}
The first typical step in doing optimization with such random variables in our context is passing to a much smaller class of distributions. %Due to the linearity of the problem under consideration, that is, due to the fact that 
The probabilities of the form
$\mathbb{P}(X_1+\cdots+X_n\in A)$ are linear in each distribution. 
%It therefore would be sufficient to find the 
Thus one example of such a class would be the
extreme points of the set of measures satisfying (\ref{concentr}).
%That is, the points of a set of our convex set of 
%measures 
(Recall that a point is extreme if it
%that do 
does
not lie in the interior of any segment contained in the set.) % inside this set.
%Or, indeed, any collection that contains all distributions satisfying (\ref{concentr}) in its convex span would do.
The extreme measures are often quite simple or are contained in another simple set. If we can prove a desired inequality 
%for them
when each distribution is extreme, then the inequality holds for 
any distributions satisfying~(\ref{concentr}). 

%Methods used earlier, for example in \cite{tj}, also allow to prove optimal bounds if we are able to prove the same bounds for a collection
%of distributions that satisfy (\ref{concentr}) and contain the extreme points in its convex span.

As it was shown implicitly by Leader and Radcliffe~\cite{LR}, any distribution on $\mathbb{R}$ satisfying (\ref{concentr}) with $\alpha_i=\frac{1}{k}$ can be expressed as a convex combination of uniform distributions on  $k$-blocks. This then allowed them to pass to working with just such distributions and use tools from extremal combinatorics 
which originate 
from Erd\H{o}s's well-known approach \cite{ELO}.

Answering a question of Erd\H{o}s, Kleitman~\cite{Kleitman} established the sharp upper bound for $\concdiam$ for sums of random vectors each distributed on some $2$-block. Regrettably, we were unable to use the ideas of this short and simple proof, due to the fact that in some sense there are ``too many'' extreme  points and in some sense only the minority are distributions on $2$-blocks. It is not difficult to come up with examples of % essentially high-dimensional 
%\m{high-dimensional, two-dimensional skirtingiems dalykams} 
distributions in the 
%two-dimensional Euclidean space
plane
that are extreme in the set of measures satisfying (\ref{concentr}) with $\alpha_i=\frac{1}{2}$ and that cannot be written as convex combinations of distributions on $2$-blocks. Consider, for example, the uniform distribution on the vertices of an equilateral triangle with a side length of $1$ and the point at its center. It has concentration exactly $\frac{1}{2}$ and it is not difficult to check that it is extreme.

As it turns out, in the latter example it actually makes sense to decompose the measure into a delta measure on the center of the triangle and the uniform distribution on the three vertices of the triangle (a $3$-block). The first measure does not satisfy (\ref{concentr}) with $\alpha_i=\frac{1}{2}$, but the second distribution satisfies it with $\alpha_i=\frac{1}{3}$. In this work we show
how to decompose arbitrary measures into not necessarily equally sized blocks in an optimal way
so that traditional techniques for proving concentration inequalities can be applied.
%It will turn out that if there is a split of the underlying measure such that there are ``sufficiently many'' components on $k$-blocks with $k$ large, we will be able to prove sharp anticoncentration bounds on $\concdiam$ of the sums in question. 

We start by approximating the underlying distributions with discrete ones. We then examine the properties of their distance graphs (we join points of the support of the distribution if they are at a distance $<1$ in the underlying norm). We show that those distributions that are close to some line in $\mathcal{S}$ have distance graphs with no induced odd cycles of length $\geq 5$ and the same is true for the complements of such graphs. By the strong perfect graph theorem proved by Chudnovsky, Robertson, Seymour and Thomas~\cite{strong_perfect} such graphs are perfect, i.e., for all their induced subgraphs the chromatic number is equal to the clique number. 

The condition
(\ref{concentr}) provides a bound on a clique number of the distance graph. For instance, if the distribution under consideration is uniform and it satisfies (\ref{concentr}) with $\alpha_i=\frac{1}{2}$, it means that the distance graph of its support has no clique larger than half of the number of its vertices. A vertex-colouring of the distance graph %we can infer that the color classes
provides a decomposition of the distribution into blocks where each block corresponds to a colour class. % ($k$ being the size of a colour class in each case).
%and the bound that comes from the clique number will ensure that there are enough ``large'' components.
Since in our case the clique number and the chromatic number coincide by the strong perfect graph theorem, we obtain a decomposition of our distribution into the optimal number of blocks. 
%Since those two parameters coincide by the strong perfect graph theorem, we thus have the same bound on the chromatic number.

This allows us to obtain optimal results in this restrictive setting for arbitrary normed spaces, not just $\mathbb{R}^d$ (see Theorem~\ref{close_to_line}). %Furthermore, it will also allow us to obtain the desired result of Theorem~\ref{teor1} when almost all of the measures are close to a single line (the ``almost all'' costs us the factor $(1+o(1))$). 

The remaining case is settled using a classical result by Hal{\'a}sz \cite{halasz}. We use it to show that if $\concdiam$ of our sum is at least the value we are trying to establish (which is of order $n^{-\frac 1 2}$ in the i.i.d. case), then most of the distributions can be made close to the same line by shifting.
%if a significant proportion of the distributions can't be made close to the same line by shifting, the magnitude of $\concdiam$ of our sum must be 
%much smaller than the bound we are trying to establish.
%For example, in the i.i.d. case we get $o(n^{-\frac{1}{2}})$ while the optimal sequence %and so 
%yields a magnitude of $n^{-\frac{1}{2}}$.

The proofs are organized as follows. 
The relevant discrete approximation which allows us to pass to finitely supported distributions is provided in Section~\ref{sec.discrete}. We then turn to establishing properties of distance graphs of finite sets close to a line. For methodical reasons we treat the case of the Euclidean norm first in Section~\ref{sec.perfect} and a generalization is given in Section~\ref{sec.btk}.
In Section~\ref{sec.decomposition} we present our result on the decomposition of measures. % mentioned above.
%Then similar results are established for general norms 
In Section~\ref{sec.btk} we prove that blocks that are close to a line satisfy, the so called, B.T.K. chain condition, see \cite{btk, jones, LR}. In Section~\ref{sec.nice_proofs} we use this property in a similar way as Jones~\cite{jones} and our decomposition to prove Theorem~\ref{close_to_line}.
Next, we proceed to the general case of distributions that are not necessarily close to a line. The tools from the work of Hal{\'a}sz are presented and adapted to our setting in Section~\ref{sec.halasz}.
Finally, in Section~\ref{sec.general} we prove a version of local limit theorem, Lemma~\ref{lem.clt}, for our extremal distributions,
and combine all of the above to prove the main result, Theorem~\ref{thm.main}. Theorems~\ref{thm.front}, \ref{teor1} and \ref{local} are 
then obtained as corollaries of Theorem~\ref{thm.main}.

\section{Discrete approximation}
\label{sec.discrete}

A natural way to approximate an arbitrary probability distribution $\mu$ by a discrete one is to use
empirical distributions \cite{varadarajan}.
In general this is not enough to ensure continuity of the concentration function \cite{eddyhartigan}.
For example, let $X$ be distributed uniformly on a unit circle in the plane.
The empirical distribution $X_n$ of $n$ i.i.d copies of $X$ has $\concdiam(X_n, 2)=1$ almost surely,
while $\concdiam(X, 2) = \frac 1 2$. See \cite{eddyhartigan} for a more general result. For concentration
a general way out turns out to be sampling from $(1+\delta) X$ instead.

%\begin{theorem}\label{thm.billingsley}
%    Let $P$, $P_1, P_2, \dots$ be probability measures on $(S, \mathcal{S})$ where $S$ is a metric space
%    and $\mathcal{S}$ is its Borel $\sigma$-algebra. Then the following 
%\end{theorem}

If a random vector $X$ has distribution $\mu$, it will be sometimes be more convenient to use the notation $\concdiam(\mu, \cdot)$ for $\concdiam(X, \cdot)$, and similarly for $\conc$. We write $\mu_n \Rightarrow \mu$ if a sequence of measures $(\mu_n)$ converges weakly to a measure $\mu$, see \cite{billingsley_1999}.

%Generalizing the definition above, for a random element $X$ in a probability space $(\mathcal{M}, \mathcal{F}, \mu)$ where $\mathcal{M} = (M, d)$ is a metric space define $\concdiam(X,t) = \sup_{A \in \mathcal{F}, A \text{ open}, {\rm diam}(A) \le t} \mu(A)$ where ${\rm diam}(A) = \sup_{x,y \in A} d(x,y)$.

\begin{lemma}\label{lem.weak_discrete}
    Let $X$ be a random vector with values in a complete separable normed space ${\mathcal S}$.
    Suppose $\concdiam(X,1) = \alpha$.
    Fix $\delta > 0$ and let $\mu$ be the distribution of $(1+\delta)X$.  For each $n$ define the empirical measure $M_n=M_{\omega,n}$, such that given a realization $\omega$ of i.i.d. random vectors $\xi_1, \xi_2, \dots $ distributed according to $\mu$,  
    $M_{\omega, n}(A) = n^{-1} |\sum_{i=1}^n \mathbf{1}_A(\xi_i(\omega))|$ for each Borel set $A$.
%\m{OK? Probability zero events..}
    %There is a sequence of discrete random vectors $Y_n$ with values in $\mathcal{M}$ such that
    Then almost surely
    \begin{align}
        &M_n \Rightarrow \mu  \label{eq.weak_discrete}
           \intertext{and}
        %\\ & 
        &\limsup_{n \to \infty} \concdiam(M_n,1) \le \alpha. \label{eq.weak_diam}
    \end{align}
%    ($Y_n$ is distributed uniformly over a fixed multiset of $n$ values constructed by sampling $n$ i.i.d. copies of $(1+\delta)X$).
\end{lemma}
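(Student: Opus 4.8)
The plan is to handle the two assertions separately. The almost sure weak convergence $M_n \Rightarrow \mu$ in \eqref{eq.weak_discrete} is the classical theorem of Varadarajan \cite{varadarajan} on empirical measures on a separable metric space, so no work is needed there. For the rest, I would fix once and for all the almost sure event $\mathcal E = \{M_n \Rightarrow \mu\}$ and work on $\mathcal E$ only. Since $\mathcal S$ is Polish, a weakly convergent sequence of probability measures is tight (Prokhorov), so on $\mathcal E$ we have: for every $\epsilon > 0$ there is a compact $K = K_\epsilon \subseteq \mathcal S$ with $\sup_n M_n(\mathcal S \setminus K) < \epsilon$. (Measurability of $\omega \mapsto \concdiam(M_{\omega,n},1)$ is routine, since for a finitely supported measure this is $n^{-1}$ times the largest number of atoms contained in a common set of diameter $< 1$.)

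To prove \eqref{eq.weak_diam}, fix $\omega \in \mathcal E$ and $\epsilon \in (0,1)$, and suppose for contradiction that there is a subsequence $(n_j)$ and open sets $S_j$ with $\diam(S_j) \le 1$ and $M_{n_j}(S_j) > \alpha + 3\epsilon$. The crucial slack comes from the dilation: writing $\rho = (1+\delta)^{-1} < 1$ and $\gamma = \delta/4$, the fact that $\mu$ is the law of $(1+\delta)X$ gives, by monotonicity of $\concdiam(X,\cdot)$,
\[
\concdiam(\mu, 1 + 4\gamma) \;=\; \concdiam\bigl(X,\,(1+4\gamma)\rho\bigr) \;\le\; \concdiam(X, 1) \;=\; \alpha ,
\]
so $\mu$ gives mass at most $\alpha$ to any open set of diameter up to $1 + 4\gamma$, not merely to those of diameter $1$. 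Take the compact set $K = K_\epsilon$ and a finite cover of it by balls $B(z_1, \gamma/2), \dots, B(z_N, \gamma/2)$ with $z_i \in K$. For each $j$ set $I_j = \{\, i : B(z_i, \gamma/2) \cap S_j \neq \emptyset \,\} \subseteq \{1,\dots,N\}$ and $U_j = \bigcup_{i \in I_j} B(z_i, \gamma/2)$; then $S_j \cap K \subseteq U_j$, and since any two points of $U_j$ are joined by a chain through at most two points of $S_j$ one gets $\diam(\overline{U_j}) \le \diam(S_j) + 2\gamma \le 1 + 2\gamma$.

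There are only $2^N$ possible values of $I_j$, so by pigeonhole we may pass to a further subsequence along which $I_j$, and hence $U_j$ and $\overline{U_j}$, equals a fixed set $U$, $\overline U$. For these $j$,
\[
M_{n_j}(S_j) \;\le\; M_{n_j}(\mathcal S \setminus K) + M_{n_j}(\overline U) \;<\; \epsilon + M_{n_j}(\overline U),
\]
and since $\overline U$ is closed the portmanteau inequality yields $\limsup_j M_{n_j}(\overline U) \le \mu(\overline U)$. Finally, $\overline U$ has diameter at most $1 + 2\gamma$, so it sits inside its open $\gamma$-neighbourhood, which has diameter at most $1 + 4\gamma$; hence $\mu(\overline U) \le \concdiam(\mu, 1 + 4\gamma) \le \alpha$ by the displayed inequality. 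Putting the pieces together, $\alpha + 3\epsilon < \limsup_j M_{n_j}(S_j) \le \epsilon + \alpha$, which is impossible. Therefore $\limsup_n \concdiam(M_n, 1) \le \alpha + 3\epsilon$ for every $\epsilon > 0$, i.e. \eqref{eq.weak_diam} holds on $\mathcal E$, an event of probability one.

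The only genuine difficulty is that $\concdiam(M_n, 1)$ is a supremum over infinitely many sets, so one cannot apply the portmanteau inequality set-by-set and take limits; the combination of tightness, a finite cover, and the pigeonhole reduction to a single limiting set $\overline U$ is exactly what tames this supremum. The role of the $(1+\delta)$-dilation — the same device already flagged in the text via the circle example — is simply to create enough room to absorb the enlargement of $\overline U$ to an open superset without exceeding diameter $1+4\gamma$, keeping its $\mu$-mass below $\alpha$.
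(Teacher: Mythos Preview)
Your proof is correct and follows the same overall strategy as the paper: invoke Varadarajan for \eqref{eq.weak_discrete}; then for \eqref{eq.weak_diam} argue by contradiction, use Prokhorov tightness to restrict to a compact $K$, reduce the varying sets $S_j$ to a single fixed closed set of diameter only slightly larger than $1$, apply the portmanteau inequality to that fixed set, and absorb the diameter excess using the $(1+\delta)$-dilation so that the $\mu$-mass is still at most $\alpha$.

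The one genuine difference is in how the ``fixed limit set'' is produced. The paper passes to a convergent subsequence of $\cl(K\cap A_{n_k})$ in the Hausdorff metric on closed subsets of $K$ (which is compact because $K$ is), obtaining a limit $A'$ and then a closed $A''\supseteq A'$ with $\diam(A'')\le 1+\delta/2$. You instead take a finite $\gamma/2$-cover of $K$ and use pigeonhole on the $2^N$ possible index sets $I_j$ to force $U_j$ to be eventually constant. Your route is more elementary---it avoids the hyperspace machinery entirely---at the cost of a slightly fussier diameter bookkeeping (the chain bound $\diam(U_j)\le \diam(S_j)+2\gamma$ and the extra $\gamma$-thickening to get an open set). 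The paper's route is shorter once one is willing to quote the compactness of the Hausdorff hyperspace. Both exploit exactly the same slack from the dilation, and neither gives anything the other does not.
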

%\m{Is $\conc_{\diam}(M_n)$ a measurable function?}

\begin{proof}
    Varadarajan~\cite{varadarajan} proved, see also Exercise~3.1 of Billingsley~\cite{billingsley_1999}, that (\ref{eq.weak_discrete}) holds almost surely.
    %Let $\xi_1, \xi_2, \dots$ be i.i.d with distribution $\mu$, where $\mu \sim (1+\delta) X$.
    %Given $(\xi_n)$, let $M_n$, $n \in \{1,2,\dots\}$
    %be distributed uniformly on the (multi-)set $\{\xi_1, \dots, \xi_n\}$.
    %Varadarajan \cite{varadarajan} proved that $\pr(M_n \Rightarrow \mu) = 1$.
   
    %Consider a fixed realization of $\xi_1, \xi_2, \dots$ such that (\ref{eq.weak_discrete}) holds. \m{Tomai, ar %čia nieko netrūksta? Kaip mums formalizuoti darbą su įvykiais kurių tikimybė 0?}
    We will show that for each sequence of non-random measures $(M_n)$ (\ref{eq.weak_discrete}) implies (\ref{eq.weak_diam}).
    %Then $\mu_n$ becomes a non-random measure on a fixed (multi-)set.
    Let $Y_n$ be a random vector with distribution $M_n$.
    %It suffices to show that
    %the corresponding sequence $(Y_n)$ satisfies (\ref{eq.weak_diam}).
    %Let $Y_n \sim m_n$ where the sequence of non-random measures $(m_n)$ is a realization of $(M_n)$ such that 

%%    Note that $\alpha>0$.
%%    Let $R$ be large enough that $\pr(\|X\| \ge \frac {R-1} {1+\delta}) \le \frac \alpha 2$.
%%    Let $A$ be an open set with $\diam(A) \le 1$.
%%    %Then $A$ is contained in a ball of radius 1. 
%%    Suppose $A$ has a point $x_0$ with $\|x_0\| \ge R$. Then for all $x \in A$ we have $\|x\|\ge \|x\| - \|x_0 - x\| \ge  R-1$.
%%    %hence $\pr(A) \le \alpha/2$. 
%%    A classical result, e.g., \cite{billingsley_1999} Theorem~2.1,
%%    is that the following are equivalent: (a) $\mu_n \Rightarrow \mu$; (b) $\limsup \mu_n(F) \le \mu(F)$ for all closed sets $F$;
%%    (c) $\liminf \mu_n(G) \ge \mu(G)$ for all open sets $G$. Since $\{x: \|x\| \ge R-1\}$ is closed, this result and (\ref{eq.weak_discrete}) implies $\limsup \pr(Y_n \in A) \le \limsup \pr(\|Y_n\| \ge R-1) \le \pr( (1+\delta) \|X\| \ge R-1) \le \frac \alpha 2$. 
    %Thus for all $n$ large enough $\sup_{A} \pr(Y_n \in A) \le \alpha$ where the supremum
    %is over open sets $A$ with $\diam (A) \le 1$ with $x \in A$ such that $\|x\| \ge R$.
    
    %Therefore to prove the lemma it suffices to show that 
    %\[
    %    \limsup_{n\to\infty} \sup_{A \in \mathcal{A}} \pr(Y_n \in A) \le \alpha,
    %\] 
    %where $\mathcal A$ is the family of all open sets which have diameter at most 1 and are contained entirely in $\{x: \|x\| < R\}$. 
    
    Suppose (\ref{eq.weak_diam}) does not hold, i.e., there is $\epsilon > 0$, an infinite subsequence $(n_k)$ and open sets $A_{n_k}$ of diameter at most 1 such that $\pr(Y_{n_k} \in A_{n_k}) > \alpha + \epsilon$. 

    Since $\mathcal{S}$ is separable and complete, by Prokhorov's theorem (see Theorem~5.2 of \cite{billingsley_1999}), $\{M_n, n\in\{1,2,\dots\}\}$ %?? \cup \{\mu\}$ 
    is tight.
    That is, there exists a compact subset $K$ %and $k_0 > 0$ 
    such that $M_{n_k}(K) \ge 1 - \frac \epsilon 2$ for all $k \in \{1, 2, \dots\}$. So we have $\pr(Y_{n_k} \in K \cap A_{n_k}) > \alpha + \frac \epsilon 2$.

    %Since $(\cl A_{n_k})$ is contained in a compact set
%https://en.wikipedia.org/wiki/Compact_space
%(X, d) is complete and totally bounded (this is also equivalent to compactness for uniform spaces).[10]
%https://en.wikipedia.org/wiki/Totally_bounded_space
%The unit ball in a Hilbert space, or more generally in a Banach space, is totally bounded (in the norm topology) if and only if the space has finite dimension.
%bet:
%A metric space is separable if and only if it is homeomorphic to a totally bounded metric space.
%% and $\mathcal{S}$ is complete,\m{{\color{red} čia klaida: rutulys yra kompaktiškas tik baigtinės dimensijos erdvėse}}
    Let $\diam(A)$ denote the diameter of a set $A\subseteq\mathcal{S}$ and let $\cl(A)$ denote its closure.
    Since $K$ is compact and complete, the space of closed bounded subsets of $K$ with Hausdorff distance is a compact and complete metric space (see, e.g., \cite{munkres}, p. 281). So the sequence $(n_k)$ %$\{K \cap A_{n_k}, k \ge k_0\}$
    has a subsequence $(n_k')$ such that $\cl (K \cap A_{n_k'})$ converges in Hausdorff distance to a fixed set $A'$ as $k \to \infty$. As $\diam(\cl (K \cap A_{n_k})) = \diam(K \cap A_{n_k}) \le 1$ it follows that $A'$
    (and hence also each $\cl (K \cap A_{n_k'})$ for $k$ large enough) is contained in a closed set $A''$ with $\diam(A'') \le 1 + \frac \delta 2$.
    Thus for all $k$ large enough
    \begin{equation*} %\label{eq.A''}
        \pr(Y_{n_k'} \in A'') > \alpha + \frac \epsilon 2.
    \end{equation*}
    Now using Theorem~2.1 of \cite{billingsley_1999} and (\ref{eq.weak_discrete}) we obtain a contradiction
    \[
        \limsup_{n\to \infty} \pr(Y_n \in A'') \le \pr((1+\delta) X \in A'') = \pr(X \in (1+\delta)^{-1} A'') \le \alpha.
    \]
    This is because $\diam( (1+\delta)^{-1} A'') \le (1+\delta/2) (1+\delta)^{-1} < 1$ so there is an open set of diameter at most 1
    that contains  $(1+\delta)^{-1} A''$. % This is a contradiction to (\ref{eq.A''}).
    %, so $(1+\delta)^{-1} A''$ is contained in an open set of diameter at most $1$. 
\end{proof}

\begin{lemma}\label{lem.discrete_approx_try}
    Let $X_1, \dots, X_m$ be independent random vectors with values in a complete separable normed space $\mathcal{S}$. Suppose 
    $\concdiam(X_i,1) \le \alpha$ and write $S = X_1 + \dots + X_m$. 
    Let $\epsilon > 0$. Then there is $\delta_0 > 0$
    such that for any $\delta \in (0, \delta_0)$ 
    there exist random vectors $\{Y_{i,n}, i \in \{1,\dots, m\}, n \in \{1, 2, \dots\}\}$ such that %\m{$n \ge n_0$??}
    $Y_{i,n}$ has the uniform distribution on a multiset of cardinality $n$,
    for each $n$ the vectors $Y_{1,n}, \dots, Y_{m,n}$ are independent,
    for each $i \in \{1,\dots,m\}$
    $Y_{i,n} \Rightarrow (1+\delta) X_i$, $\limsup_n \concdiam(Y_{i,n},1) \le \alpha$ as $n\to \infty$
    and for $T_n = Y_{1,n} + \dots + Y_{m,n}$ we have
    $\liminf_n \concdiam(T_n,1) \ge \concdiam(S,1) - \epsilon$
    as $n \to \infty$.
    %$\concdiam(T_n,1) \to \concdiam(S, 1)$?
\end{lemma}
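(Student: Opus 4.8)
\medskip

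The plan is to apply Lemma~\ref{lem.weak_discrete} separately to each $X_i$ and then assemble the resulting empirical measures into an independent family, taking care that weak convergence of each summand is upgraded to a lower bound on the concentration of the sum. First I would fix $\epsilon>0$. Since $\concdiam(S,1)=\sup_S\pr(S\in S)$ is a supremum over open sets of diameter at most $1$, there is an open set $A$ with $\diam(A)\le 1$ and $\pr(S\in A) \ge \concdiam(S,1)-\epsilon/2$. Because $A$ is open and $S$ has a fixed distribution, a slightly shrunken set $A_\eta = \{x: d(x,A^c)>\eta\}$ still satisfies $\pr(S\in A_\eta)\ge\concdiam(S,1)-\epsilon$ for $\eta$ small; this ``open room'' is what lets weak convergence push through. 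I would then choose $\delta_0$ small enough that $(1+\delta)$-scaling moves each $X_i$, and hence $S$, by less than $\eta/ (2m)$ in the relevant sense, so that $(1+\delta)S = \sum_i (1+\delta)X_i$ still puts mass $\ge \concdiam(S,1)-\epsilon$ on a fixed open set of diameter $\le 1$ (the diameter grows by a factor $1+\delta$, so one also shrinks the target or absorbs the loss into the choice of $A$; alternatively work with $A_\eta$ of diameter $<1$ and note $(1+\delta)A_\eta$ has diameter $\le 1$ for $\delta$ small).

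\medskip

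Next, for each fixed $\delta\in(0,\delta_0)$ and each $i$, apply Lemma~\ref{lem.weak_discrete} to $X_i$: on a probability space carrying i.i.d.\ samples from the law of $(1+\delta)X_i$, the empirical measures $M^{(i)}_n$ satisfy, almost surely, $M^{(i)}_n\Rightarrow(1+\delta)X_i$ and $\limsup_n\concdiam(M^{(i)}_n,1)\le\alpha$. Run these $m$ constructions on independent probability spaces and let $Y_{i,n}$ be a random vector with (random) law $M^{(i)}_n$, with $Y_{1,n},\dots,Y_{m,n}$ independent for each $n$; each $Y_{i,n}$ is by construction uniform on a multiset of cardinality $n$. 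Fix a realization of the samples for which all $m$ of the almost-sure conclusions hold simultaneously (a countable intersection of full-measure events). This gives the required $Y_{i,n}$ with $Y_{i,n}\Rightarrow(1+\delta)X_i$ and $\limsup_n\concdiam(Y_{i,n},1)\le\alpha$.

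\medskip

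It remains to establish $\liminf_n\concdiam(T_n,1)\ge\concdiam(S,1)-\epsilon$ for $T_n=Y_{1,n}+\dots+Y_{m,n}$. Weak convergence of each coordinate, together with independence, gives $T_n\Rightarrow(1+\delta)X_1+\dots+(1+\delta)X_m=(1+\delta)S$ (the convolution/product map is continuous for weak convergence on a separable metric space; this is where separability of $\mathcal S$ is used again). Let $A''$ be the fixed \emph{open} set of diameter $\le 1$ with $\pr((1+\delta)S\in A'')\ge\concdiam(S,1)-\epsilon$ obtained in the first paragraph. By the portmanteau theorem (Theorem~2.1 of \cite{billingsley_1999}), for an open set we have $\liminf_n\pr(T_n\in A'')\ge\pr((1+\delta)S\in A'')\ge\concdiam(S,1)-\epsilon$, and since $\concdiam(T_n,1)\ge\pr(T_n\in A'')$ the claim follows.

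\medskip

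The main obstacle is the bookkeeping in the first paragraph: one must commit, \emph{before} sampling, to a single open target set of diameter at most $1$ that works for $(1+\delta)S$, and verify that shrinking $A$ slightly and then dilating by $1+\delta$ keeps the diameter at most $1$ while losing at most $\epsilon$ in probability — exactly the $(1+\delta)$-scaling trick already used in Lemma~\ref{lem.weak_discrete}, but now applied to the sum rather than to a single summand. Everything else is a routine combination of Varadarajan's theorem (via Lemma~\ref{lem.weak_discrete}), continuity of convolution under weak convergence, and the portmanteau theorem.
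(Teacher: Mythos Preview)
Your proposal is correct and follows essentially the same route as the paper: apply Lemma~\ref{lem.weak_discrete} to each $X_i$, use independence and separability to get $T_n \Rightarrow (1+\delta)S$, and then apply the portmanteau theorem on a fixed open target set of diameter at most~$1$. The one place where the paper is tidier is the ``bookkeeping'' you flag as the main obstacle: rather than using $A_\eta=\{x:d(x,A^c)>\eta\}$ (for which $\diam(A_\eta)<1$ is not automatic), the paper replaces $A$ by its convex hull, translates so that $0\in A$, and uses $A_k=(1-k^{-1})A$, which has diameter $(1-k^{-1})\diam(A)<1$ and increases to $A$ --- then the target for $T_n$ is simply $A$ itself, since $\pr((1+\delta)S\in A)=\pr(S\in(1+\delta)^{-1}A)\ge\pr(S\in A_{k_0})$.
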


\begin{proof}
     Let $A$ be an open set of diameter at most
     1 in $\mathcal{S}$ such that $\pr(S \in A) \ge \concdiam(S, 1) - \frac{\epsilon} 2 > 0$.
%
     %Since $A$ is contained in an open ball $B_x(1)$ of radius $1$ centered around some point $x$, by redefining $X_1 = X_1 - x$
     %and $A = A-x$ we can assume that $A \subseteq B_0(1)$.
%
     The convex hull of $A$ is also an open set with the same diameter,
     %proof: for x,y in convex hull we can write
     %x-y = \sum_{i} p_i (x_{j_i} - y_{k_i})      p_i > 0, x_{j_i}, y_{k_i} \in A
     %(imagine x and y as striped bars coloured according to their components
     %and subtract one bar from another.
     %By triangle inequality \|x-y\| \le \sum p_i \|x_{j_i} - y_{k_i}\| <= diam(A).         
     therefore we can assume $A$ is convex and
     $0 \in A$ (shift one of the random vectors if necessary).
     %\m{prove convex hull properties? math.stackexchange?}

     %Note that for any $x,y \in \mathbb{R}$ with $0 \le x \le y$ we have $x A \subseteq y B$.
     Consider open sets $A_k = (1 - k^{-1}) A$ for $k \in \{2, 3, \dots\}$. We have $\diam (A_k) = (1-k^{-1}) \diam(A)$
     and by continuity of measure (e.g., \cite{durrett}, Theorem 1.1.1) $\pr(S \in A_k) \to \pr(S \in A)$ as $k \to \infty$.
     Pick $k_0$ such that %$k_0 \ge 2$ and
     $\pr(S \in A_{k_0}) \ge \concdiam(S,1) - \epsilon$. Set $\delta_0 := \frac 1 {k_0-1}$.

     Fix $\delta \in (0, \delta_0)$.
     For $i \in \{1, \dots, m\}$
     apply Lemma~\ref{lem.weak_discrete} with $\delta$ and $X_i$
     to obtain a sequence $(Y_{i,n})$ with non-random distributions $\mu_{i,n}$ such that $Y_{i,n} \Rightarrow (1+\delta) X_i$ with $\limsup_{n\to\infty}\concdiam(Y_{i,n},1) \le \alpha$.
     Note that $(1+\delta)^{-1} \ge 1-k_0^{-1}$.

     %Using Levy's continuity theorem it follows that $T_n \Rightarrow (1 + \delta)S$. \m{too strong? reference?}
     %https://math.stackexchange.com/questions/996651/weak-convergence-of-a-pair-of-random-elements
	 As $Y_{i,n}$ converges in distribution to $(1+\delta) X_i$, %\m{Billingsley: random variables converge in distribution, measures converge weakly}
     $\{Y_{i,n}, i \in \{1, \dots, m\}\}$ are independent for each $n$
     and $\{(1+\delta) X_i, i \in \{1,\dots,m\}\}$ are independent, it follows (see, e.g. Example~3.2 of \cite{billingsley_1999}) that $(Y_{1,n}, \dots, Y_{m,n})$ converges in distribution to $((1+\delta) X_1, \dots, (1+\delta)X_m)$, therefore using Theorem~2.1 of \cite{billingsley_1999} $T_n \Rightarrow (1 + \delta)S$.
%
     %By Claim~\ref{claim.diam_continuity} $A$ has an open subset $A_1$
     %of diameter at most $1-\epsilon$ such that $\pr(S \in A_1) \ge \concdiam(S,1) - 2 \epsilon$.
%
     %Again by Theorem~2.1 of \cite{billingsley_1999}
     By the same theorem
     $\lim \inf \pr(T_n \in A) \ge \pr((1+\delta) S \in A) = \pr(S \in (1+\delta)^{-1} A) \ge \concdiam(S,1) - \epsilon$ since  $A_{k_0} = (1+\delta_0)^{-1} A \subseteq  (1+\delta)^{-1} A$.
     %\subseteq A$.
\end{proof}

\section {Euclidean distance graphs for points close to a line}
\label{sec.perfect}

We define a \emph{distance graph} $G=G(x_1, \dots, x_N)$
of a sequence of $N$ points $x_1, \dots, x_N$ in a metric space $\mathcal{S}$ with distance $d$ by %\m{Use notation  $(\mathcal{S}, \|\cdot\|)$?}
connecting two points if they are at distance less than~1:
\begin{align*}
    V(G) = \{x_1, \dots, x_N\}; & &  E(G) = \{x_i x_j : i \ne j \mbox { and } d(x_i,x_j) < 1\}.
\end{align*}
We allow $x_i = x_j$ for $i \ne j$, in this case vertices can be distinguished by their index,
and similarly for edges. So the graph in all cases is simple.

The distance in the above definition implicitly depends on the metric or the norm induced by the space $\mathcal S$; in this section we start with Euclidean spaces with ($l_2$) norm $\|\cdot\|_2$ and ($l_2$) distance $d_2(x,y) = \|x-y\|_2$. 

We will need the following simple fact.
\begin{lemma}\label{lem.pyth}
    Let $r=(r_1, \dots, r_d)^T$ and $s=(s_1, \dots, s_d)^T$ be
    two points in $\mathbb{R}^d$ such that $\|s-r\|_2 \ge 1$,
    and both $r$ and $s$ are at $l_2$ distance at most $\frac {\sqrt 3} 4$ from the 
    the line $L$ spanned by the vector $e^1 = (1,0, \dots, 0)^T$ (the ``$x$ axis'').
    Denote by $r^*=\langle r, e^1 \rangle e^1$ and $s^*=\langle s,e^1 \rangle e^1$ their projections
    onto $L$. Then
    \begin{equation*} %\label{eq.pyth}
        |r_1 - s_1| = \|r^*-s^*\|_2 \ge %\sqrt{1 - \frac 1 {2^2}} = 
        \frac 1 2.
    \end{equation*}
\end{lemma}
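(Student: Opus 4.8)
The plan is to decompose each point into its component along the line $L$ and its component orthogonal to $L$, and then apply the Pythagorean theorem to the difference vector $s-r$.

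Concretely, I would write $r = r^* + r^\perp$ where $r^* = \langle r, e^1\rangle e^1 = r_1 e^1$ is the orthogonal projection of $r$ onto $L$ and $r^\perp = r - r^*$ is orthogonal to $e^1$; note that $\|r^\perp\|_2 = d_2(r, L) \le \tfrac{\sqrt 3}{4}$ by hypothesis, and similarly $s = s^* + s^\perp$ with $\|s^\perp\|_2 \le \tfrac{\sqrt 3}{4}$. Since $s^* - r^* = (s_1 - r_1)e^1$ lies in the direction $e^1$ while $s^\perp - r^\perp$ is orthogonal to $e^1$, the vectors $s^*-r^*$ and $s^\perp-r^\perp$ are orthogonal, so
\[
    \|s-r\|_2^2 = \|s^*-r^*\|_2^2 + \|s^\perp - r^\perp\|_2^2 .
\]

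Next I would bound the orthogonal part using the triangle inequality: $\|s^\perp - r^\perp\|_2 \le \|s^\perp\|_2 + \|r^\perp\|_2 \le \tfrac{\sqrt 3}{4} + \tfrac{\sqrt 3}{4} = \tfrac{\sqrt 3}{2}$, hence $\|s^\perp-r^\perp\|_2^2 \le \tfrac 34$. Combining this with $\|s-r\|_2 \ge 1$ and the displayed identity gives
\[
    \|s^*-r^*\|_2^2 = \|s-r\|_2^2 - \|s^\perp-r^\perp\|_2^2 \ge 1 - \tfrac 34 = \tfrac 14,
\]
so $\|s^*-r^*\|_2 \ge \tfrac 12$. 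Finally, since $s^*-r^* = (s_1-r_1)e^1$ and $\|e^1\|_2 = 1$, we have $\|s^*-r^*\|_2 = |r_1 - s_1|$, which yields the claim.

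There is no real obstacle here; the only point requiring a little care is the orthogonality of $s^*-r^*$ and $s^\perp-r^\perp$ that licenses the use of the Pythagorean theorem, which follows immediately because the first vector is a scalar multiple of $e^1$ and the second is orthogonal to $e^1$ by construction. The constant $\tfrac{\sqrt 3}{4}$ in the hypothesis is exactly what makes the bound $\tfrac 34$ on the squared orthogonal part work out so that $\tfrac 14$ remains for the squared parallel part.
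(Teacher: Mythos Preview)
Your proof is correct and essentially identical to the paper's own argument: both use the Pythagorean decomposition of $s-r$ into its component along $L$ and its orthogonal component, bound the orthogonal part by the triangle inequality as at most $\frac{\sqrt 3}{2}$, and conclude $\|s^*-r^*\|_2^2 \ge 1 - \frac 3 4 = \frac 1 4$. The only cosmetic difference is that the paper introduces the auxiliary point $q = s^* + r - r^*$ (so that $r-q = r^*-s^*$ and $s-q = s^\perp - r^\perp$) rather than naming the perpendicular components $r^\perp, s^\perp$ explicitly.
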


\begin{proof}
    Let $q=s^*+r-r^*$. %Suppose first that $r$, $s$ and $q$ are distinct. 
    Then $\langle s-q, r-q \rangle=\langle s-s^* - (r-r^*), r^*-s^* \rangle = 0$, so 
    by the Pythagorean theorem $\|r^*-s^*\|^2_2=\|r-q\|^2_2=\|s-r\|^2_2-\|q-s\|^2_2$. 
    Since $\|s-r\|_2 \ge 1$ 
    and by the triangle inequality and the assumption $\|q-s\|_2 = \|s^*-s +r-r^*\|_2 \le \|s-s^*\|_2 + \|r-r^*\|_2 \le \frac {\sqrt 3} 2$,
    we have $\|r^*-s^*\|^2_2 \ge 1 - (\frac{\sqrt 3} 2)^2$. % and the claim follows.
    %If $r$, $s$ and $q$ are not distinct then either $q=s$ or $q=r$. In the first case $\|r^* - s^*\|=\|r-q\|= \|r-s\| \ge 1$. 
    %The second case implies $\|r-s\| = \|q-s\| \le \frac 1 2$, a contradiction.
\end{proof}

%I don't know why I had this lemma and then deleted it? I suppose I will want to use it with $c=\frac 1 4$,
%so maybe I don't need $c$. Ah, because it is too trivial - it is triangle inequality
%
%\begin{lemma}\label{lem.no_pyth}
%    Let $r$, $s$, $r^*$, $s^*$ be points in $\mathbb{R}^d$ such that $\|s-r\| \ge 1$,
%    $\|r-r^*\| \le c$ and $\|s-s^*\| \le c$. Then $\|r^*-s^*\| \ge 1 - 2 c$.
%\end{lemma}
%
%\begin{proof}
%  By the triangle inequality
%    \[
%        \|r - s\| = \|r-r^* + r^* - s^* + s^* - s\| \le \|r^*-s^*\| + 2c.
%    \]
%\end{proof}

\begin{lemma}\label{lem.C5_complement}
    Let $G$ be a distance graph of points $x_1, \dots, x_N$ in $\mathbb{R}^d$,
    and assume each of the points is of distance at most $\frac {\sqrt 3} 4$
    from the line generated by the vector $(1, 0, \dots,0)^T$ (the ``$x$ axis''). 
    Then $\bar{G}$ does not contain
    an induced cycle of odd length $5$ or more.
\end{lemma}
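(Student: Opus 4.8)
The plan is to argue by contradiction: suppose that $\bar G$ contains an induced cycle $C$ of odd length $m \ge 5$, say on vertices $v_1, v_2, \dots, v_m$ (indices mod $m$) in cyclic order, so that in $\bar G$ the pairs $v_i v_{i+1}$ are edges and all other pairs are non-edges. Translating back to $G$, this means: $v_i v_{i+1}$ is a \emph{non-edge} of $G$ (i.e. $\|v_i - v_{i+1}\|_2 \ge 1$) for each $i$, while every ``chord'' $v_i v_j$ with $j \ne i\pm 1$ \emph{is} an edge of $G$ (i.e. $\|v_i - v_j\|_2 < 1$). First I would project everything onto the $x$-axis $L$: write $a_i = \langle v_i, e^1\rangle$ for the (signed) coordinate of the projection $v_i^*$ of $v_i$. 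Since each $v_i$ is within $\frac{\sqrt 3}{4}$ of $L$, Lemma~\ref{lem.pyth} applies to every consecutive pair, giving $|a_i - a_{i+1}| \ge \tfrac12$ for all $i$.

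Next I would extract the key geometric constraint on the chords. If $v_i v_j$ is an edge of $G$, then $\|v_i - v_j\|_2 < 1$, so in particular the projections satisfy $|a_i - a_j| \le \|v_i - v_j\|_2 < 1$. So the picture on the line is: consecutive points are at distance $\ge \tfrac12$, but any two non-consecutive points are at distance $< 1$. The plan is to derive a contradiction purely from these one-dimensional inequalities together with the parity of $m$. Consider the cyclic sequence of ``steps'' $\delta_i = a_{i+1} - a_i$; then $\sum_{i=1}^m \delta_i = 0$, each $|\delta_i| \ge \tfrac12$, and for each $i$ the partial sum over any arc of length $\ge 2$ that is not the whole cycle has absolute value $< 1$ (it equals $a_j - a_i$ for non-consecutive $i,j$). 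In particular any two consecutive steps $\delta_i$ and $\delta_{i+1}$ must have opposite signs — otherwise $|a_{i+2} - a_i| = |\delta_i + \delta_{i+1}| \ge 1$, contradicting that $v_i v_{i+2}$ is a chord (here we use $m \ge 5$, so $v_i$ and $v_{i+2}$ are genuinely non-consecutive and not equal). Hence the signs of $\delta_1, \dots, \delta_m$ must alternate around the cycle; but an alternating $\pm$ pattern around a cycle of odd length $m$ is impossible, since it forces two equal signs somewhere. This contradiction finishes the proof.

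The only subtlety I need to be careful about is the degenerate/boundary cases: points may coincide ($v_i = v_j$ allowed), and I should make sure that when I invoke ``$v_i$ and $v_{i+2}$ are non-consecutive'' the fact $m \ge 5$ really guarantees $i+2 \not\equiv i \pm 1$ and $i + 2 \not\equiv i$, which it does (for $m = 5$, the indices $i, i+2$ are at cyclic distance $2$, never adjacent). I also want the inequality $|a_i - a_j| < 1$ to come out strictly, which it does since $\|v_i - v_j\|_2 < 1$ strictly for edges, so the step $\ge 1$ lower bound from the non-alternating case is a genuine contradiction. I expect the main (minor) obstacle to be just bookkeeping the cyclic indices and the sign argument cleanly; there is no hard analysis here — Lemma~\ref{lem.pyth} does all the geometric work, and the rest is an elementary parity argument on a cyclic sequence of reals.
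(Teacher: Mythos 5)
Your proposal is correct and is essentially the same argument as the paper's: project to the line, apply Lemma~\ref{lem.pyth} to get consecutive-step separation $\ge \tfrac12$, rule out two consecutive steps of the same sign via the length-2 chord constraint, and then invoke odd parity. Your phrasing in terms of the cyclic sign pattern of $\delta_i$ is a streamlined version of the paper's classification of vertices by whether both cycle-neighbours lie to the left or to the right; and you are also right that the coincident-points preamble in the paper's proof is not actually needed for this lemma, since the step-sign argument never uses distinctness of the points (only that consecutive vertices project $\ge\tfrac12$ apart and chords project $<1$ apart). The one imprecise aside — that any arc of length $\ge 2$ short of the full cycle has projected length $<1$ — is false for arcs of length $m-1$ (whose endpoints are adjacent on the cycle), but you only ever use arcs of length $2$, so the proof is unaffected.
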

\begin{proof}
    Suppose there exists such an induced cycle $C_k$ in $\bar{G}$.

    If there are $x_i, x_j$, $i \ne j$ with $x_i=x_j$ both on $C_k$,
    they cannot be neighbours on $C_k$ as $x_ix_j \in E(G)$.
    If they are non-neighbours on $C_k$ then since $k\ge 5$, there is a 
    neighbour $y$ of $x_i$ on $C_k$ which is not a neighbour of $x_j$ on $C_k$.
    So we have both $x_i y \not \in E(G)$ and  $x_j y \in E(G)$, a contradiction. 
    Thus we can assume each vertex in $C_k$ represents a distinct point.

    By Lemma~\ref{lem.pyth} the first coordinates
    for any neighbours on $C_k$ differ by at least $\frac 1 2$.

    Suppose first there is a vertex $v$ that has neighbours $u$ and $w$ on $C_k$
    such that $u_1 < v_1 < w_1$. Then by Lemma~\ref{lem.pyth} $\|u-w\|_2 \ge w_1 - u_1 = w_1 - v_1 + v_1 - u_1 \ge 1$,
    a contradiction, since $k \ge 5$,  $u$ and $w$ are non-neighbours on the cycle and so they must
    satisfy $\|u-w\|_2 < 1$.

    Now suppose such a vertex does not exist. Let the set $A$ consist
    of all vertices $v \in V(C_k)$ that have both of their cycle neighbours to their right ($v_1 < u_1$ and $v_1 < w_1$). 
    Let the set $B = V(C_k) \setminus A$ consist of
    all vertices $v\in V(C_k)$ that have both of their cycle neighbours to their left ($u_1 < v_1$ and $w_1 < v_1$).
    
    %Note that both sets are non-empty (consider the leftmost and the rightmost points).
    As $k$ is odd, there exist two neighbours $v$ and $w$ on $C_k$ which
    are in the same set: without loss of generality, let $v, w \in B$ and $v_1 < w_1$.
    But then $v \not \in B$, a contradiction.
    %But $v$ has at least one neighbour to its the left, which leads to the previously considered case.
\end{proof}

\begin{lemma}\label{lem.C5}
    Let $G$ be a distance graph of points $x_1, \dots, x_N$ in $\mathbb{R}^d$,
    and assume each of the points is of distance at most $\frac {\sqrt 3} 4$
    from the line generated by the vector $(1, 0, \dots,0)^T$ (the ``$x$ axis''). 
    Then $G$ does not contain
    an induced cycle of length $5$ or more.
\end{lemma}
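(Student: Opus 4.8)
The plan is to argue by contradiction, in the spirit of the proof of Lemma~\ref{lem.C5_complement}, but using a global extremum of the first coordinate in place of the parity argument. So suppose $G$ has an induced cycle $C_k = v_1 v_2 \cdots v_k v_1$ with $k \ge 5$, the $v_i$ listed in cyclic order, and write $a_i$ for the first coordinate of $v_i$. (As in Lemma~\ref{lem.C5_complement} one could first reduce to the case that the $v_i$ are distinct points, since equal endpoints would be adjacent, hence cycle-consecutive, and then the other neighbour of $v_i$ would be adjacent to $v_j$ against induceness; but the argument below will not actually need this.)

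First I would record two facts that get used repeatedly. If $v_i$ and $v_j$ lie at distance at least $2$ along $C_k$, then since the cycle is induced $v_i v_j \notin E(G)$, so $\|v_i - v_j\|_2 \ge 1$, and Lemma~\ref{lem.pyth} yields $|a_i - a_j| \ge \tfrac 1 2$. Conversely, if $v_i v_j \in E(G)$ then $|a_i - a_j| \le \|v_i - v_j\|_2 < 1$, since orthogonal projection onto the $x$ axis is $1$-Lipschitz. In words: along the cycle a non-edge pushes first coordinates apart by at least $\tfrac 1 2$, while an edge keeps them within $1$.

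Next I would normalize: using the dihedral symmetry of $C_k$, relabel so that $a_1 = \min_i a_i$, and translate the configuration along the $x$ axis so that $a_1 = 0$, hence $a_i \ge 0$ for all $i$; then, reflecting the cycle if necessary, assume $a_2 \le a_k$ (the two neighbours of $v_1$ on $C_k$ being $v_2$ and $v_k$). For every $k \ge 5$ the pairs $\{v_1,v_3\}$ and $\{v_2,v_k\}$ are at cycle-distance $2$, and $\{v_3,v_k\}$ is at cycle-distance $\min(k-3,3) \ge 2$, while $v_1v_2$, $v_1v_k$ and $v_2v_3$ are edges. Applying the two facts: $a_3 \ge \tfrac12$ (from the non-edge $v_1v_3$); $a_k \ge a_2 + \tfrac12 \ge \tfrac12$ (from the non-edge $v_2v_k$ together with $a_2 \le a_k$); and $a_k < 1$ (from the edge $v_1v_k$). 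Since $a_3 \ge \tfrac12$ while $a_k - \tfrac12 < \tfrac12$, the inequality $|a_3 - a_k| \ge \tfrac12$ can only hold with $a_3 \ge a_k + \tfrac12$, whence $a_3 \ge a_k + \tfrac12 \ge a_2 + 1$ --- contradicting $a_3 < a_2 + 1$, which comes from the edge $v_2v_3$.

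I do not expect a genuine obstacle here; the one thing to be careful about is the combinatorics of the cycle, namely that for all $k \ge 5$ simultaneously the four vertices $v_1,v_2,v_3,v_k$ exhibit the adjacency pattern used above --- in particular that $v_3$ is a non-neighbour of both $v_1$ and $v_k$. This is precisely where the hypothesis $k \ge 5$ enters (for $k=4$ one has $v_3v_k \in E(G)$, and indeed induced $4$-cycles genuinely occur), and the rest is just the ``factor-two gap'' between the non-edge bound $\tfrac12$ and the edge bound $1$ on $|a_i - a_j|$, which cannot be reconciled near a vertex of minimum first coordinate.
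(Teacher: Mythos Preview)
Your proof is correct and takes a cleaner route than the paper's. Both argue by contradiction and both rely on Lemma~\ref{lem.pyth} for the non-edge bound $|a_i-a_j|\ge\tfrac12$, but the paper selects \emph{two} extreme vertices --- one of minimal and one of maximal first coordinate --- and then splits into cases according to whether they are cycle-neighbours; in the harder (non-neighbour) case it traces the two arcs between them and shows that some pair of consecutive cycle vertices must ``straddle'' the first coordinate of a non-neighbour, which via Lemma~\ref{lem.pyth} forces that edge to have length $\ge 1$. You instead use only the single minimum $v_1$ and the four vertices $v_1,v_2,v_3,v_k$ around it, and extract the contradiction directly from the gap between the non-edge bound $\tfrac12$ and the edge bound $1$ on first-coordinate differences. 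This avoids the case split and the arc-tracing entirely; the paper's argument gains nothing in exchange. Your observation that the reduction to distinct points is unnecessary is also correct: in an induced $C_k$ with $k\ge5$ any two non-consecutive vertices are at distance $\ge1$ anyway, and for consecutive ones only the inequality $|a_i-a_j|<1$ is used, which holds trivially if the points coincide.
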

\begin{proof}
    Suppose such a cycle $C_k$ exists in $G$. 
    Similarly as in the proof of Lemma~\ref{lem.C5_complement}
    we can assume each vertex in $C_k$ must correspond to a distinct
    point in $\mathbb{R}^d$, since $k\ge 5$ it is easy to obtain a contradiction otherwise.
    
    Let $a$ and $b$ be points on $C_k$ with a minimal and maximal first coordinate respectively.
    Suppose first that $a$ and $b$ are not neighbours on $C_k$.
    Consider the two paths $P_1$ and $P_2$ from $a$ to $b$ that make up $C_k$.
    As $k \ge 5$ the longer path, say $P_2$ has at least two internal vertices.
    Let $u$ be a vertex on $P_1$, $u \not \in \{a,b\}$.
    
    %Let $y$ be the last vertex
    %on the path $P_2$ from $a$ to $b$ that has $y_1 \le u_1$. Then one of
    %its neighbours on $P_2$, $z$ has $z_1 > u_1$. 
    First suppose there exist two neighbours $y$ and $z$ on the path from $P_2$, $y,z \not \in \{a,b\}$ such that $y_1 \le u_1 \le z_1$.
    Since $C_k$ is an induced
    subgraph, $\|u-y\|_2 \ge 1$ and $\|z-u\|_2 \ge 1$. By Lemma~\ref{lem.pyth}
    $\|z-y\|_2 \ge z_1 - y_1 = z_1 - u_1 + u_1 - y_1 \ge 1$,
    a contradiction to the fact that $y$ and $z$ are neighbours on $C_k$ (and $G$). Now if such $y$ and $z$ do not exist, then either all vertices $v \in V(P_2) \setminus \{a,b\}$ satisfy $v_1 \le u_1$, or they all satisfy $v_1 \ge u_1$. Without loss of generality, suppose the former. Let $y$ be the neighbour of $b$ on $P_2$. Since $a_1 \le y_1 \le u_1$ there must be two neighbours $u'$ and $u''$ on the path from $a$ to $u$ on $P_1$ such that $u'_1 \le y_1 \le u_1''$ (it may be that $u'=a$ or $u''=u$). As $P_2$ has at least 2 internal vertices, $y$ cannot be a neighbour of $u'$ or $u''$ on $C_k$. So again $\|u'' - u'\|_2 \ge u_1'' - u_1' \ge u_1'' - y_1 + y_1 - u_1' \ge 1$, a contradiction.

    Now suppose $a$ and $b$ are neighbours on $C_k$. Since $k \ge 5$ there exists
    a vertex $u$ such that $au, bu \not \in E(C_k)$. By the choice of $a$ and $b$,
    $a_1 \le u_1 \le b_1$. Thus again by Lemma~\ref{lem.pyth} $\|a-b\|_2 \ge 1$, a contradiction.
\end{proof}

\begin{corollary}\label{cor.perfect}
    A distance graph $G$ of a sequence of points in $\mathbb{R}^d$ with Euclidean distance at most $\frac {\sqrt 3} 4$
    from a line is a perfect graph.
\end{corollary}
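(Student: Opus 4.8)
The plan is to deduce the statement directly from the two preceding lemmas together with the strong perfect graph theorem of Chudnovsky, Robertson, Seymour and Thomas~\cite{strong_perfect}, which asserts that a finite graph is perfect if and only if it contains neither an odd hole (an induced cycle of odd length at least $5$) nor an odd antihole (an induced subgraph whose complement is such a cycle).

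First I would reduce to the situation already handled by Lemmas~\ref{lem.C5} and~\ref{lem.C5_complement}, in which the relevant line is the one spanned by $e^1 = (1,0,\dots,0)^T$. A general line in $\mathbb{R}^d$ has the form $\{t\upsilon + b : t \in \mathbb{R}\}$ with $\|\upsilon\|_2 = 1$; composing the translation $x \mapsto x - b$ with an orthogonal transformation sending $\upsilon$ to $e^1$ yields a Euclidean isometry $\phi$ of $\mathbb{R}^d$ that maps this line onto the $x$-axis and preserves all pairwise distances. Hence $\phi$ carries the sequence of points defining $G$ to a sequence whose distance graph is isomorphic to $G$, with all points now at Euclidean distance at most $\frac{\sqrt 3}{4}$ from the $x$-axis. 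Since perfection is invariant under graph isomorphism, it suffices to prove the corollary for this transformed configuration.

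Now Lemma~\ref{lem.C5} shows that $G$ contains no induced cycle of length $5$ or more; in particular $G$ has no odd hole. Lemma~\ref{lem.C5_complement} shows that $\bar G$ contains no induced cycle of odd length $5$ or more, which is precisely the statement that $G$ has no odd antihole. By the strong perfect graph theorem, $G$ is perfect.

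I do not expect a genuine obstacle here: the real work has been front-loaded into Lemmas~\ref{lem.pyth}, \ref{lem.C5_complement} and~\ref{lem.C5}, so the corollary is essentially a one-line consequence once the theorem of~\cite{strong_perfect} is quoted in its odd-hole/odd-antihole form. The only point requiring a modicum of care is the isometric normalization of the line and the remark that the distance graph, and hence perfection, is unaffected by it.
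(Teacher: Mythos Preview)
Your proposal is correct and follows essentially the same route as the paper: invoke Lemma~\ref{lem.C5} and Lemma~\ref{lem.C5_complement} to rule out odd holes and odd antiholes, then conclude by the strong perfect graph theorem. The paper's proof is even terser, omitting the isometric normalization of the line that you spell out, but the substance is identical.
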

\begin{proof}
    Lemma~\ref{lem.C5_complement}
    and Lemma~\ref{lem.C5} show that neither $G$ nor its complement has an induced
    odd cycle of length 5 or more, in other words, $G$ is a \emph{Berge graph}. The result follows by 
    the strong perfect graph theorem~\cite{strong_perfect}.
\end{proof}

\medskip

By adding $(0, -\frac{\sqrt 3} 4)$ to each of the points $(-1, 0)$, $(0,0)$, $(1,0)$, $(\frac 1 2, \frac {\sqrt 3} 2)$, $(-\frac 1 2, \frac {\sqrt 3} 2)$ and slightly perturbing them it is easy to see that the constant $\frac {\sqrt 3} 4$ is best possible in Lemma~\ref{lem.C5} and Corollary~\ref{cor.perfect}.

In Section~\ref{sec.btk} we extend the corollary to arbitrary normed spaces over the reals. 

\begin{lemma}\label{lem.perfect_gen}
    Let $\mathcal{S}$ be a normed space over the reals. A distance graph $G$ of a sequence of points in $\mathcal{S}$ with distance at most $\frac 1 8$ (at most $\frac {\sqrt 3} 4$ if $\mathcal{S}$ is Hilbert)
    from a line in $\mathcal{S}$ is a perfect graph.
\end{lemma}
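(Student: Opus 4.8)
The plan is to mimic the proof of Corollary~\ref{cor.perfect}, replacing the orthogonal projection onto the line, which is only available in the Hilbert setting, by a norm-one linear functional supplied by the Hahn--Banach theorem. Write the line as $L = \{tv + b : t \in \mathbb{R}\}$ with $\|v\| = 1$. Define $\phi_0$ on $\mathrm{span}(v)$ by $\phi_0(tv) = t$; since $\|v\| = 1$ we have $\|\phi_0\| = 1$, and Hahn--Banach extends $\phi_0$ to a linear functional $\phi$ on $\mathcal{S}$ with $\|\phi\| = 1$ and $\phi(v) = 1$. This $\phi$ will play the role of the ``first coordinate'' $x \mapsto x_1$ of Section~\ref{sec.perfect}. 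In the Hilbert case one instead uses $\psi(x) = \langle x, v\rangle$, which likewise satisfies $\|\psi\| = 1$ and $\psi(v) = 1$.

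The key step is the analogue of Lemma~\ref{lem.pyth}: if $x, y \in \mathcal{S}$ satisfy $d(x, L) \le \frac 18$, $d(y, L) \le \frac 18$ and $\|x - y\| \ge 1$, then $|\phi(x) - \phi(y)| \ge \frac 12$; moreover $|\phi(x) - \phi(y)| \le \|x - y\|$ always, since $\|\phi\| = 1$. For the lower bound, pick $t_x, t_y \in \mathbb{R}$ with $\|x - (t_x v + b)\|$ and $\|y - (t_y v + b)\|$ equal to the respective distances to $L$ (or, if the infimum is not attained, within an arbitrarily small $\epsilon$). The triangle inequality gives $\|x - y\| \le \|x - (t_x v + b)\| + \|(t_x - t_y)v\| + \|(t_y v + b) - y\| \le \frac 14 + |t_x - t_y|$ (up to $\epsilon$), so $|t_x - t_y| \ge \frac 34$; on the other hand $\phi(t_x v + b) = t_x + \phi(b)$, so $|\phi(x) - t_x - \phi(b)| = |\phi(x - (t_x v + b))| \le \frac 18$ (up to $\epsilon$) and similarly for $y$, whence $|\phi(x) - \phi(y)| \ge |t_x - t_y| - \frac 14 \ge \frac 12$ after letting $\epsilon \downarrow 0$. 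For Hilbert $\mathcal{S}$ the same conclusion holds with the larger radius $\frac{\sqrt 3}{4}$: this is exactly Lemma~\ref{lem.pyth}, whose Pythagorean proof is valid verbatim in any Hilbert space, with $\psi(\cdot) = \langle \cdot, v\rangle$ in place of the first coordinate.

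With these two facts in hand, the proofs of Lemma~\ref{lem.C5_complement} and Lemma~\ref{lem.C5} carry over essentially verbatim after substituting $\phi$ (resp.\ $\psi$) for the first coordinate and $\frac 18$ (resp.\ $\frac{\sqrt 3}{4}$) for the radius. Indeed, the combinatorial arguments there use only two properties of the first coordinate: that it is $1$-Lipschitz with respect to the distance (so that coordinates differing by at least $1$ force distance at least $1$, hence a non-edge of $G$), and that two vertices adjacent on the relevant induced cycle --- which, in whichever of $G,\bar G$ contains that cycle, sit at distance at least $1$ --- have coordinates differing by at least $\frac 12$. The ``same point appears twice on the cycle'' case and the case analyses splitting $V(C_k)$ by whether cycle-neighbours lie to the left or right use nothing geometric beyond this. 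Consequently neither $G$ nor $\bar G$ contains an induced odd cycle of length $5$ or more; that is, $G$ is a Berge graph, and by the strong perfect graph theorem~\cite{strong_perfect} it is perfect.

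The only points needing care are bookkeeping around the distance to $L$ possibly not being attained --- handled by the $\epsilon$-room above, which is harmless since we only need the non-strict bound $\frac 12$ --- and explicitly verifying that Section~\ref{sec.perfect}'s arguments invoke nothing about the ambient geometry beyond the two properties above; this is routine but should be spelled out rather than left to the reader. A clean way to organise the write-up is to first record the $\phi$-version of Lemma~\ref{lem.pyth} as a standalone lemma, then note that Lemma~\ref{lem.C5_complement} and Lemma~\ref{lem.C5} hold with $\frac{\sqrt 3}{4}$ replaced by $\frac 18$ in a general normed space by the same proofs \emph{mutatis mutandis}, and finally repeat the two-line argument of Corollary~\ref{cor.perfect}.
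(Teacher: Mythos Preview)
Your proposal is correct and follows essentially the same route as the paper: the paper calls your Hahn--Banach functional $\phi$ a ``supporting functional'' for $v$, records your key step (the $\tfrac12$-separation under $\phi$ of points at distance $\ge 1$) as a separate lemma (Lemma~\ref{lem.supporting_norms}), and then, exactly as you do, notes that the proofs of Lemmas~\ref{lem.C5_complement} and~\ref{lem.C5} go through verbatim with $a\mapsto f_\upsilon(a)$ (or $a\mapsto\langle a,\upsilon\rangle$ in the Hilbert case) in place of the first coordinate, finishing via the strong perfect graph theorem as in Corollary~\ref{cor.perfect}. The only cosmetic differences are that the paper translates so the line passes through the origin rather than carrying the shift $b$, and its proof of the $\tfrac12$-bound writes $x=x^*+x'$ with $x^*\in L$ directly rather than via your $\epsilon$-approximation; neither affects the argument.
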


\section{Decomposition to uniform measures}
\label{sec.decomposition}

Let $S = (s_1, \dots, s_N)$ be a finite sequence in a metric space. If we ignore the order, $S$ can be viewed as a multiset, so we denote $|S|=N$
and define the uniform distribution $\nu$ on $S$ by $\nu(\{x\}) = |S|^{-1} |\{i \in \{1,\dots,|S|\}: x_i = x\}|$.

\begin{lemma}\label{lem.cliques} 
    Let $\mathcal{S}$ be a metric space with distance $d$. % over the reals.
    Let $X$ be a random vector distributed uniformly on a finite sequence $S$ in $\mathcal{S}$. 
    Let $G$ be the distance graph of $S$. Then $\concdiam(X,1) = \frac {\omega(G)} {|S|}$ where $\omega(G)$ is the clique number of $G$.
    %Let $X$ be a random vector in $\mathbb{R}^d$ with $\concdiam(X, 1) \le \alpha$.
    %There are $\epsilon_0 > 0$ and $\delta_0>0$ such that the following holds. 
    %Let $\mu_S$ be uniform on a multiset $S=\{x_1, \dots, x_N\}$ in $\mathbb{R}^d$ obtained by Lemma~\ref{lem.discretization}
    %with parameters $\epsilon \le \epsilon_0$ and $\delta\le\delta_0$.
    %Then the distance graph $G$ of $S$ cannot have a clique of more than $(\alpha+\epsilon) |S|$ vertices. 
\end{lemma}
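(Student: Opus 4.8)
The plan is to establish the two inequalities $\concdiam(X,1)\ge \omega(G)/|S|$ and $\concdiam(X,1)\le \omega(G)/|S|$ separately. Write $N=|S|$, so that ``$X$ uniform on the sequence $S=(s_1,\dots,s_N)$'' means $\pr(X\in A)=N^{-1}\,|\{i:\ s_i\in A\}|$ for every set $A$.

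For the lower bound I would produce a single admissible open set attaining the value. Let $K$ be a maximum clique of $G$, so $|K|=\omega(G)$ and the (indexed) points making up $K$ are at pairwise distance $<1$. Set $\rho=\max_{a,b\in K} d(a,b)$, a maximum of finitely many reals each $<1$ (with the convention $\rho=0$ when $|K|\le 1$), let $\varepsilon=(1-\rho)/2>0$, and put $U=\bigcup_{a\in K} B(a,\varepsilon)$ with $B(a,\varepsilon)$ the open $\varepsilon$-ball about $a$. Then $U$ is open, and for $z\in B(a,\varepsilon)$, $w\in B(b,\varepsilon)$ with $a,b\in K$ the triangle inequality gives $d(z,w)\le d(z,a)+d(a,b)+d(b,w)<\varepsilon+\rho+\varepsilon=1$, so $\diam(U)\le 1$. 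Since $s_i\in B(s_i,\varepsilon)\subseteq U$ for every index $i$ occurring in $K$, we obtain $\pr(X\in U)\ge N^{-1}|K|=\omega(G)/N$, hence $\concdiam(X,1)\ge\omega(G)/N$.

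For the upper bound, let $U$ be an arbitrary open set with $\diam(U)\le 1$ and set $I=\{i:\ s_i\in U\}$. The key point is that the indexed vertex set $I$ induces a clique in $G$: for $i\ne j$ in $I$ we have $d(s_i,s_j)\le\diam(U)\le 1$, and in fact $d(s_i,s_j)<1$ (this is trivial if $s_i=s_j$, and the case $s_i\ne s_j$ is addressed below), so $s_is_j\in E(G)$. Then $|I|\le\omega(G)$, whence $\pr(X\in U)=N^{-1}|I|\le\omega(G)/N$; taking the supremum over all such $U$ gives $\concdiam(X,1)\le\omega(G)/N$ and finishes the proof.

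The one step that genuinely needs care, and which I would single out as the main (if modest) obstacle, is the claim that two distinct points of an open set $U$ of diameter at most $1$ are at distance strictly less than $1$. This holds whenever $\mathcal{S}$ carries a compatible vector-space structure, as in the applications of this lemma: if $d(s_i,s_j)=\|s_i-s_j\|=1$, then, $U$ being open, the point $s_i+\delta(s_i-s_j)$ lies in $U$ for all small enough $\delta>0$ and satisfies $\|(s_i+\delta(s_i-s_j))-s_j\|=(1+\delta)\|s_i-s_j\|=1+\delta>1$, contradicting $\diam(U)\le 1$. (In a fully general metric space an open set of diameter exactly $1$ may contain two points at distance exactly $1$; the statement should then be read with $\mathcal{S}$ a normed space, or with ``at most $1$'' replaced by ``less than $1$'' in the definition of $\concdiam$, after which everything above goes through verbatim.)
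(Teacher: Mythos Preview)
Your argument is correct and follows essentially the same route as the paper: the paper constructs the same union of small balls for the lower bound and simply asserts the upper bound ``by the definition of $\concdiam$ and $G$''. You are in fact more careful than the paper in isolating the one delicate point---that two distinct points of an open set of diameter at most $1$ must be at distance strictly less than $1$---and your observation that this step requires a normed (rather than arbitrary metric) space is well taken; the paper only ever applies the lemma in normed spaces, so nothing downstream is affected.
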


%concdiam realised on a subset S' of S with pairwise distances < 1.
%so it is a clique of size \concdiam(X, 1) |S|
% so \concdiam(X,1) \le \frac{\omega(G)} {|S|}.
\begin{proof}
    By the definition of $\concdiam$ and $G$ it follows that $\concdiam(X, 1) \le \frac {\omega(G)} {|S|}$.
    %The points in $S$ are distinct. 
    To prove the opposite inequality, suppose $W$, $W \subseteq V(G)$, forms a clique in $G$, thus $d(x,y) < 1$
    for each $x,y \in W$. We can find a radius $\delta$ small enough that 
    the union of balls of radius $\delta$ with centers at $W$ has diameter at most one.
    Thus $\pr(X \in W) = \frac{|W|} {|S|} \le \concdiam(X,1)$.
\end{proof}

\medskip
For (multi-)sets $A$ and $B$ denote by $A \uplus B$ the operation of multiset union, so the multiplicity of $x$
in $A \uplus B$ is the sum of multiplicities of $x$ in $A$ and $B$ respectively.

\begin{lemma}\label{lem.decomposition}
    Let $\mathcal{S}$ be a normed space over the reals with norm $\|\cdot\|$.
%
    %Let $t \in (0, c_{\mathcal{S}})$.
    %Let $X$ be a random vector in $\mathbb{R}^d$ 
    %such that $\concdiam(X, 1) \le \alpha$. 
    % 
    Let $X$ be a random vector distributed uniformly on a finite multiset $S \subset \mathcal{S}$.

    Suppose $\concdiam(X, 1) \le \alpha$ and there is a line $L$ in $\mathcal{S}$ such that
    $S$ is at distance at most $\frac 1 8$ (at most $\frac {\sqrt 3} 4$ in the case $\mathcal{S}$ is Hilbert) from $L$.

    Then %for any $\epsilon, \delta > 0$, $\epsilon < \frac 1 {10}$ 
    there exist %s a finite multiset $S$ in $\mathbb{R}^d$ and 
    $K \le \alpha |S|$ and
    sets $S_1, \dots, S_K$ that partition $S$ ($S=\uplus_{j=1}^K S_j$) such that
    for each $j \in \{1, \dots, K\}$ and each $x,y \in S_j$, $x \ne y$ we have $\|x - y \| \ge 1$.
\end{lemma}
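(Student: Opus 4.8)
The plan is to translate the conclusion into a graph-coloring statement and apply the perfection result from Section~\ref{sec.perfect}. Let $G$ be the distance graph of the sequence $S$ (so $V(G) = S$, with repeated points distinguished by index, and $xy \in E(G)$ iff $\|x-y\| < 1$). A partition of $S$ into sets $S_1, \dots, S_K$ with all pairwise distances within each $S_j$ at least $1$ is exactly a proper coloring of $G$ with $K$ color classes: each $S_j$ is an independent set in $G$. So the lemma asks for a proper coloring of $G$ using at most $\alpha|S|$ colors.

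First I would pin down the clique number. Since $X$ is uniform on $S$ and $\concdiam(X,1) \le \alpha$, Lemma~\ref{lem.cliques} gives $\omega(G)/|S| = \concdiam(X,1) \le \alpha$, hence $\omega(G) \le \alpha |S|$. Next, since $S$ lies within distance $\frac 1 8$ of the line $L$ (or $\frac{\sqrt 3}4$ in the Hilbert case), Lemma~\ref{lem.perfect_gen} (Corollary~\ref{cor.perfect} in the Euclidean case) tells us that $G$ is perfect. By the definition of perfection applied to $G$ itself, $\chi(G) = \omega(G)$. Therefore $G$ admits a proper coloring with $K := \chi(G) = \omega(G) \le \alpha|S|$ colors. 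Taking $S_1, \dots, S_K$ to be the nonempty color classes yields the partition: within each class no two (distinct) vertices are adjacent in $G$, which by definition of the distance graph means $\|x-y\| \ge 1$ for all $x \ne y$ in that class. This proves the lemma.

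There is essentially no obstacle here beyond assembling the pieces already established: the only point that needs a moment's care is the bookkeeping around multiplicities — if a point of $S$ appears with multiplicity greater than one, the two copies are adjacent in $G$ (distance $0 < 1$), so they automatically land in different color classes, which is consistent with the requirement that within $S_j$ we have $\|x-y\| \ge 1$ only for $x \ne y$ as elements of the sequence (i.e.\ at distinct indices). One should also note the edge case $\omega(G) = 0$, i.e.\ $S$ empty, which is vacuous, and that $K$ is the number of nonempty color classes so $K \le \chi(G) \le \alpha|S|$ as required.
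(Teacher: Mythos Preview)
Your proof is correct and follows essentially the same approach as the paper: use Lemma~\ref{lem.cliques} to bound $\omega(G) \le \alpha|S|$, invoke Lemma~\ref{lem.perfect_gen} (or Corollary~\ref{cor.perfect}) to get $\chi(G)=\omega(G)$ via perfection, and take the colour classes as the partition. The paper's version is just a bit terser, but the argument is identical.
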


\begin{proof}
    %Let $\tilde{X}$ and $S$ be as obtained by Lemma~\ref{lem.discretization} and Conjecture~\ref{conj.discretization_alpha}.
    By the strong perfect graph theorem (Corollary~\ref{cor.perfect} or Lemma~\ref{lem.perfect_gen}) and the definition of perfect graphs
    we get that the chromatic number $K$ of the distance graph $G$ on $S$ (with an arbitrary ordering of $S$) is equal to its clique number,
    which by Lemma~\ref{lem.cliques} is at most $\alpha |S|$. 

    Let $S_1, \dots, S_K$ be the colour classes of an optimal colouring. As they are independent
    sets in the distance graph, their vertices are pairwise at distance at least 1.
\end{proof}

\medskip

Recalling that $\nu(S)$ denotes a uniform measure on a (multi-)set $S$, the lemma says that 
$X \sim \nu(S)$ and $\nu(S) = \frac{\sum_{j=1}^K |S_j| \nu(S_j)} {|S|}$, that is $\nu$ is a
mixture of uniform measures on blocks $S_j$.

\section {Arbitrary norms and the B. T. K. chain condition for sets close to a line}
\label{sec.btk}

The main result of this section is Lemma~\ref{lem.btk_strip2} which extends a result of Jones~\cite{jones}
for $d=1$ to general normed spaces and distributions close to a line.

We will need the following concept, which was also used in \cite{LR}.
%\begin{definition}
    Let $\upsilon$ be a unit vector in a normed space $\mathcal{S}$ over the reals.
    We say that $f$ is a \emph{supporting functional} for $\upsilon$ if
    $f$ is a bounded linear functional on $\mathcal{S}$ such
    that $|f(x)| \le \|x\|$ for all $x \in \mathcal{S}$ and
    $f(\upsilon) = 1$.
%\end{definition}

Note that $f$ as above satisfies $|f(x)|=\|x\|$ for $x \in L_\upsilon$ where $L_\upsilon = \{t \upsilon: t \in \mathbb{R}\}$.
The Hahn--Banach theorem can be used to obtain a supporting functional for any unit vector $\upsilon$ in any normed space $\mathcal{S}$ over the reals: first define $f(x) = t$ if $x = t \upsilon$ for all $x\in L_\upsilon$, then extend $f$ to whole $\mathcal{S}$ while preserving its norm $\|f\|=1$.

In the case when the space is Hilbert, we
can easily generalize Lemma~\ref{lem.pyth} and allow a larger distance from the line. 

\begin{lemma}\label{lem.pyth_gen}
    Let $r$ and $s$ be
    %two points in a Hilbert space $\mathcal{S}$ with norm $\|\cdot\|$ such that $\|s-r\| \ge 1$,
    two points in a Hilbert space $\mathcal{S}$ with norm $\|\cdot\|$ and inner product $\langle \cdot, \cdot \rangle$.
    Let $\upsilon$ be a unit vector in $\mathcal{S}$.
    Suppose $\|s-r\| \ge 1$, and both $r$ and $s$ are at distance at most $\frac {\sqrt 3} 4$ from the 
    the line $L_\upsilon$ generated by $\upsilon$. %spanned by a unit vector $\upsilon$ in $\mathcal{S}$.
    Denote by $r^*=\langle r, \upsilon \rangle \upsilon$ and $s^*=\langle s,\upsilon \rangle \upsilon$ their projections
    onto $L$. Then
    \begin{equation*} %\label{eq.pyth}
        |\langle r,\upsilon \rangle - \langle s, \upsilon \rangle| = \|r^*-s^*\| \ge
        \frac 1 2.
    \end{equation*}
\end{lemma}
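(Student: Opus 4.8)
The plan is to reduce Lemma~\ref{lem.pyth_gen} to essentially the computation already carried out in Lemma~\ref{lem.pyth}, but now in an abstract Hilbert space rather than in $\mathbb{R}^d$ with the distinguished first coordinate axis. The point is that Lemma~\ref{lem.pyth} was really a two- (or at most three-) dimensional fact: it only used the Pythagorean decomposition of $s-r$ into its component along the line $L$ and its component in the orthogonal complement, together with the bound on how far $r$ and $s$ sit from $L$. None of that needed the ambient dimension to be finite or the line to be a coordinate axis; it needed only an inner product and the associated orthogonal projection.

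Concretely, I would argue as follows. Set $q = s^* + (r - r^*)$, exactly as in the proof of Lemma~\ref{lem.pyth}. Since $r - r^* = r - \langle r, \upsilon\rangle \upsilon$ and $s - s^* = s - \langle s,\upsilon\rangle\upsilon$ both lie in $\upsilon^\perp$, while $r^* - s^* = (\langle r,\upsilon\rangle - \langle s,\upsilon\rangle)\upsilon$ lies in $L_\upsilon = \mathrm{span}(\upsilon)$, we get
\[
    \langle s - q,\ r - q\rangle = \langle (s - s^*) - (r - r^*),\ r^* - s^*\rangle = 0,
\]
because the first argument is in $\upsilon^\perp$ and the second is a multiple of $\upsilon$. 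Hence by the Pythagorean theorem in $\mathcal{S}$,
\[
    \|r^* - s^*\|^2 = \|r - q\|^2 = \|s - r\|^2 - \|q - s\|^2.
\]
Now $q - s = (s^* - s) + (r - r^*)$, so by the triangle inequality and the hypothesis that $r$ and $s$ are within $\frac{\sqrt 3}{4}$ of $L_\upsilon$ (note $\|r - r^*\| = d(r, L_\upsilon)$ since $r^*$ is the orthogonal projection, the nearest point of $L_\upsilon$ to $r$, and similarly for $s$), we have $\|q - s\| \le \|s - s^*\| + \|r - r^*\| \le \frac{\sqrt 3}{2}$. Combining with $\|s - r\| \ge 1$ gives $\|r^* - s^*\|^2 \ge 1 - \frac 3 4 = \frac 1 4$, i.e. $\|r^* - s^*\| \ge \frac 1 2$; and the equality $|\langle r,\upsilon\rangle - \langle s,\upsilon\rangle| = \|r^* - s^*\|$ is immediate from $\|\upsilon\| = 1$.

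I do not anticipate a genuine obstacle here; the only points that need a word of care are (i) that in a Hilbert space the distance from a point to the line $L_\upsilon$ is realized by the orthogonal projection onto $L_\upsilon$ (so the stated hypothesis really does control $\|r - r^*\|$ and $\|s - s^*\|$), and (ii) that the orthogonality/Pythagorean argument is valid verbatim in an infinite-dimensional Hilbert space, which it is, since only the two-dimensional subspace spanned by $\upsilon$ and $q - s$ (together with $r - q$) is actually in play. So the "hard part" is merely bookkeeping: making sure the decomposition $s - q = (s - s^*) - (r - r^*) \in \upsilon^\perp$ versus $r^* - s^* \in \mathrm{span}(\upsilon)$ is stated cleanly, after which the proof is identical to that of Lemma~\ref{lem.pyth}.
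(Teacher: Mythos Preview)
Your proposal is correct and is exactly the approach the paper takes: the paper's proof of Lemma~\ref{lem.pyth_gen} simply says ``The proof is analogous to the proof of Lemma~\ref{lem.pyth},'' and what you have written is precisely that analogous argument carried out explicitly in the abstract Hilbert space setting.
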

\begin{proof}
    The proof is analogous to the proof of Lemma~\ref{lem.pyth}.
\end{proof}

%aš vienur naudoju "spanned", o kitur ekvivalenčiai "generated"..
\medskip

The next lemma extends this for general normed spaces.

\begin{lemma} \label{lem.supporting_norms}
    Let $\mathcal{S}$ be a normed space over the reals with norm $\|\cdot\|$.
    Let $\upsilon \in \mathcal{S}$ with $\|\upsilon\|=1$, let $L_\upsilon$ be the line generated by $\upsilon$ and let $f$ be 
    %the
    a
    supporting functional for $\upsilon$. If $x,y \in \mathcal{S}$ have distance at most $\frac 1 8$ from $L_\upsilon$ and $\|x - y\| \ge 1$ then $|f(x) - f(y)| \ge \frac 1 2$.

    Furthermore, if $\mathcal{S}$ is Hilbert with inner product $\langle \cdot, \cdot \rangle$, we can take $f(x) = \langle x, \upsilon \rangle$ for all $x \in \mathcal{S}$ and the statement still holds for $x,y$ with distance at most $\frac {\sqrt 3} 4$ from $L_\upsilon$.
\end{lemma}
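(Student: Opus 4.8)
The plan is to reduce the general normed-space statement to a one-dimensional calculation along the line $L_\upsilon$, using the supporting functional $f$ in place of the orthogonal projection that was available in the Euclidean case. Write $x = f(x)\,\upsilon + x'$ and $y = f(y)\,\upsilon + y'$, so that $f(x')=f(y')=0$; here $f(x)\upsilon$ is the ``projection'' of $x$ onto $L_\upsilon$ in the sense that $\|f(x)\upsilon\| = |f(x)|$. Since $x$ is at distance at most $\tfrac18$ from $L_\upsilon$, there is a point $t\upsilon \in L_\upsilon$ with $\|x - t\upsilon\| \le \tfrac18 + \eta$ for any $\eta>0$; applying $f$ gives $|f(x) - t| \le \|x - t\upsilon\| \le \tfrac18 + \eta$, hence $\|x - f(x)\upsilon\| \le \|x - t\upsilon\| + |t - f(x)| \le \tfrac14 + 2\eta$, and letting $\eta \downarrow 0$ (or choosing the infimum-achieving point if the support is finite, as it will be in our applications) we get $\|x - f(x)\upsilon\| \le \tfrac14$, and likewise $\|y - f(y)\upsilon\| \le \tfrac14$.

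Now estimate: $1 \le \|x - y\| \le |f(x) - f(y)|\,\|\upsilon\| + \|x - f(x)\upsilon\| + \|y - f(y)\upsilon\| \le |f(x) - f(y)| + \tfrac14 + \tfrac14$, which rearranges to $|f(x) - f(y)| \ge \tfrac12$. This is exactly the claimed bound. For the Hilbert-space refinement, take $f(x) = \langle x, \upsilon\rangle$, which is indeed a supporting functional for $\upsilon$ by Cauchy--Schwarz, and then the quantitative part is precisely Lemma~\ref{lem.pyth_gen}: the orthogonal decomposition $x = \langle x,\upsilon\rangle \upsilon + (x - \langle x,\upsilon\rangle\upsilon)$ makes $\|x - \langle x,\upsilon\rangle\upsilon\|$ equal to the true distance of $x$ to $L_\upsilon$, so the slack of $\tfrac14$ above is replaced by the Pythagorean bound, allowing distance up to $\tfrac{\sqrt3}{4}$ while still concluding $|f(x)-f(y)|\ge\tfrac12$.

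The only subtle point — and the one I would be careful to state correctly — is that in a general normed space the distance from $x$ to $L_\upsilon$ need not be attained and need not equal $\|x - f(x)\upsilon\|$: the functional $f$ need not realize the nearest-point projection. The factor $\tfrac18$ (rather than $\tfrac14$) is exactly what absorbs this loss, since passing from ``distance $\le \tfrac18$ to the line'' to ``$\|x - f(x)\upsilon\| \le \tfrac14$'' costs a factor of $2$ via the triangle inequality above. So the main (minor) obstacle is bookkeeping the two different notions of ``projection'' and making sure the constants $\tfrac18$ and $\tfrac{\sqrt3}{4}$ are threaded through correctly; there is no real difficulty beyond that, and the Hilbert case is immediate from Lemma~\ref{lem.pyth_gen}.
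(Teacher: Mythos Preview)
Your argument is correct. It differs from the paper's proof in the choice of decomposition: the paper writes $x = x^* + x'$ with $x^*$ a (near\nobreakdash-)closest point on $L_\upsilon$, so $\|x'\|\le\tfrac18$ directly, and then estimates $f(x-y) = f(x^*-y^*)+f(x'-y') \ge \|x^*-y^*\| - \|x'-y'\| \ge \|x-y\| - 2\|x'-y'\| \ge 1 - 4\cdot\tfrac18 = \tfrac12$. You instead take $x^* = f(x)\upsilon$, which forces the preliminary step $\|x - f(x)\upsilon\|\le 2\cdot\tfrac18 = \tfrac14$, and then a plain triangle inequality on $\|x-y\|$ finishes. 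Both routes spend the same factor of two and land on the same constant; yours makes explicit why $\tfrac18$ (rather than $\tfrac14$) is needed when $f$ does not realize the metric projection, while the paper's version is a line shorter because it never leaves the nearest-point decomposition. The Hilbert clause is handled identically in both, via Lemma~\ref{lem.pyth_gen}.
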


\begin{proof}
    We can write $x = x^* + x'$ and $y=y^* + y'$, where $x^*,y^* \in L$ and $\|x'\|,\|y'\| \le \frac 1 8$. Assume, for example, that $f(x^*) \ge f(y^*)$.
    By the triangle inequality $\|x^* - y^*\| \ge \|x-y\| - \|x' - y'\|$. So
    \begin{align*}
         f(x-y) &= f(x^*-y^*) + f(x' - y') = \|x^* - y^*\| + f(x'-y') 
\\
   & \ge \|x^* - y^*\| - \|f\|\|x' - y'\| = \|x^* - y^*\| - \|x' - y'\|
        \\
        & \ge \|x-y\| - 2\|x'-y'\| \ge 1 - 2(\|x'\|+\|y'\|) \ge \frac 1 2.
    \end{align*}

    The claim with a weaker condition for Hilbert spaces follows by Lemma~\ref{lem.pyth_gen}. Specifically, we do not need the Hahn--Banach theorem in this case. 
\end{proof}

\medskip

$l_1$ norm in the plane can be used to show that $\frac 1 8$ in Lemma~\ref{lem.supporting_norms} is optimal. 
We can now generalize the proof of Corollary~\ref{cor.perfect}.

\medskip

%\m{Pakeista 2023-04-28}
\begin{proofof}{Lemma~\ref{lem.perfect_gen}}
    Let $\upsilon \in \mathcal{S}$ be a unit vector such that all points are at distance at most $\frac 1 8$ (at most $\frac {\sqrt 3} 4$ when $\mathcal{S}$ is Hilbert) from 
    the line $L_{\upsilon}$ generated by $\upsilon$. 
 
    %The proofs of
    Lemma~\ref{lem.C5_complement} and Lemma~\ref{lem.C5} (with $(1,0,\dots,0)^T$ replaced with $\upsilon$)
    can be generalized to the Hilbert setting using Lemma~\ref{lem.pyth_gen} and to the general setting using Lemma~\ref{lem.supporting_norms} instead of Lemma~\ref{lem.pyth}. In the Hilbert case the projection to the first coordinate $a \mapsto a_1$ can be replaced with the mapping to the inner product $a \mapsto \langle a, \upsilon \rangle$, while in the general case it can be replaced with $a \mapsto f_\upsilon(a)$ where $f_\upsilon$ is the supporting functional for $\upsilon$. Analogous inequalities as those used in the proofs of Lemma~\ref{lem.C5_complement} and Lemma~\ref{lem.C5}
    remain correct by the definition of the supporting functional.

    Finally use the same argument as in Corollary~\ref{cor.perfect}.
\end{proofof}

\medskip

%\medskip
Let $\mathcal{S}$ be a normed space with norm $\|\cdot\|$.
Following \cite{jones, LR}, we say that sets $A, B \subset \mathcal{S}$ \emph{satisfy B.T.K. chain condition} if
the set $A \times B$ can be decomposed into $n$ sets $S_{m-n+1}, S_{m-n+3}, \dots, S_{m+n-1}$,
$m=\max(|A|, |B|)$, $n=\min(|A|, |B|)$
such that $|S_t|=t$ and for each distinct 
$(x, y), (u, v) \in S_t$ we have $\|(x+y) - (u+v)\| \ge 1$, that is, each $S_t$ is a $t$-block.
When we refer to $S \subseteq A \times B$ as a set of \emph{points} we view the pairs $(x,y) \in S$ as elements $x+y \in \mathcal{S}$
like Jones~\cite{jones}. 

\begin{lemma}\label{lem.btk_strip2}
    Let $A=\{x_1, \dots, x_a\}$ and $B=\{y_1, \dots, y_b\}$ be non-empty sets in a normed space $\mathcal{S}$ over the reals with norm $\|\cdot\|$.
    Suppose that for each pair of distinct points $x,y \in A$
    or $x,y \in B$ we have $\|x - y\| \ge 1$. 
    %Let $m=\max(|A|, |B|)$ and $n=\min(|A|, |B|)$.

    Let $\upsilon$ be a unit vector in $\mathcal{S}$ and let $f$ be a supporting functional for $\upsilon$.
    Assume that $f(x_1) \le \dots \le f(x_a)$, $f(y_1) \le \dots \le f(y_b)$ and $|f(x) - f(y)| \ge \frac 1 2$ for distinct $x,y \in A$  or $x,y \in B$.
   
    Then $A$ and $B$ satisfy the B.T.K. chain condition and furthermore
    for each set $S_t$ in the decomposition and any distinct $x,y \in S_t$
    we have 
    \begin{equation}\label{eq.supporting_separation}
        |f(x) - f(y)| \ge \frac 1 2.
    \end{equation}
\end{lemma}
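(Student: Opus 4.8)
The proof is a one-dimensional construction applied to the images under $f$, followed by a check that the resulting $S_t$ blocks are genuine blocks in $\mathcal S$. The key observation is that, by hypothesis, the values $f(x_1) < f(x_2) < \dots < f(x_a)$ are strictly increasing with consecutive gaps at least $\tfrac12$ (similarly for the $y_j$), so everything reduces to the classical B.T.K. statement about two finite subsets of $\mathbb R$ whose internal gaps are at least $\tfrac12$. First I would recall the classical construction (this is exactly the chain decomposition used by Jones \cite{jones} and Leader--Radcliffe \cite{LR} for $d=1$): writing $f(x_i)=:u_i$ and $f(y_j)=:w_j$, form the $a\times b$ array of sums $u_i + w_j$, and decompose the index set $\{1,\dots,a\}\times\{1,\dots,b\}$ along ``anti-diagonals'' $S_t = \{(i,j): i+j = \text{const}\}$ suitably indexed so that $|S_t| = t$ for $t \in \{m-n+1, m-n+3, \dots, m+n-1\}$ with $m = \max(a,b)$, $n=\min(a,b)$. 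I would define $S_t$ as the corresponding set of pairs $(x_i, y_j) \in A\times B$.

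The heart of the matter is inequality \eqref{eq.supporting_separation}: for two distinct pairs $(x_i,y_j)$ and $(x_{i'},y_{j'})$ lying in the same $S_t$ — so $i+j = i'+j'$ with $(i,j)\neq(i',j')$, hence $i \neq i'$ and $j\neq j'$, and we may assume $i < i'$, forcing $j > j'$ — I would compute
\[
    f\bigl((x_i+y_j) - (x_{i'}+y_{j'})\bigr) = (u_i - u_{i'}) + (w_j - w_{j'}),
\]
where $u_i - u_{i'} \le -\tfrac12(i'-i)$ (gaps $\ge \tfrac12$) and $w_j - w_{j'} \ge \tfrac12(j - j')$; since $i' - i = j - j' =: s \ge 1$, these two terms partially cancel and in fact I get equality $i'-i = j-j'$, so I cannot conclude from a naive gap count alone. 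Here is where the actual positions matter rather than just the gaps: the correct argument is that within a fixed anti-diagonal the pairs are genuinely spread out because the $u_i$ are increasing while the $w_j$ are decreasing along it, and one shows $|f((x_i+y_j)-(x_{i'}+y_{j'}))| = |(u_i - u_{i'}) - (w_{j'} - w_j)|$ — the difference of two quantities each of absolute value at least $\tfrac12$ but with *opposite* monotonicity, hence at least... — this is precisely the subtle point. The clean way around it, which I would adopt, is to reindex the anti-diagonal so that as one moves along it the first-coordinate index $i$ strictly increases by $1$ at each step and the second $j$ strictly decreases by $1$; then for two elements at cycle-distance $s$ along the anti-diagonal, $u_i - u_{i'} = -(u_{i'}-u_i)$ has absolute value $\ge \tfrac{s}{2}$ and $w_j - w_{j'}$ has absolute value $\ge \tfrac{s}{2}$ but *opposite sign*, so $f$ of the difference has absolute value $\ge \tfrac{s}{2} + \tfrac{s}{2} = s \ge 1 \ge \tfrac12$. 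Wait — that is actually the wrong sign bookkeeping; let me state it as: since $f(x_i+y_j) = u_i + w_j$ and along the anti-diagonal $u_i+w_j$ is itself monotone (because $u$ increases faster or slower than $w$ decreases is not guaranteed), the honest statement is that consecutive sums along the anti-diagonal differ by $|(u_{i+1}-u_i) - (w_j - w_{j-1})|$, and **this** is where I genuinely need the hypothesis: I would invoke that $A$ is a $1$-block and $B$ is a $1$-block to get, via Lemma~\ref{lem.supporting_norms}-type reasoning already built into the hypotheses here ($\|x_{i+1}-x_i\|\ge 1 \Rightarrow |u_{i+1}-u_i|\ge\tfrac12$, but we want more), that in fact the full separation $\|(x_i + y_j) - (x_{i'}+y_{j'})\|\ge 1$ holds.

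Concretely, then, I would argue: $\|(x_i+y_j) - (x_{i'}+y_{j'})\| \ge |f(x_i - x_{i'}) + f(y_j - y_{j'})|$, and since $i<i'$, $j>j'$, the quantity $f(x_i-x_{i'}) = u_i - u_{i'} \le -\tfrac12 s$ while $f(y_j - y_{j'}) = w_j - w_{j'} \ge \tfrac12 s$ — these do cancel, so I must instead bound $\|(x_i+y_j)-(x_{i'}+y_{j'})\| = \|(x_i - x_{i'}) - (y_{j'}-y_j)\|$ directly. Writing $p = x_i - x_{i'}$ and $q = y_{j'} - y_j$, we have $\|p\| \ge s \ge 1$, $\|q\|\ge s\ge 1$ (since $A, B$ are blocks with *all* pairwise distances $\ge 1$, not just consecutive ones — here I use that $\|x_i - x_{i'}\|\ge 1$ holds for *any* distinct indices), and $f(p) = u_i - u_{i'} \le -\tfrac{s}{2}$, $f(q) = w_{j'} - w_j \le -\tfrac{s}{2}$, so $f(p) + f(-q) = f(p) - f(q) \le -\tfrac s2 + \tfrac s2$ is again inconclusive — so the right move is the one Jones uses: compare $|f|$-values. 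We have $|f(x_i + y_j)| $ versus the *neighbours in the same block*: actually the cleanest and correct route, which I will take, is to observe that along an anti-diagonal the quantity $f(x_i + y_j) = u_i + w_j$ takes **distinct** values with consecutive differences $|(u_{i} - u_{i-1}) - (w_{j} - w_{j+1})|$, and since one of $u_i - u_{i-1} \ge \tfrac12$ and $w_j - w_{j+1} \le -\tfrac12$ — wait, $w$ is increasing in $j$, so moving to smaller $j$ decreases $w$, giving $w_{j+1} - w_j \ge \tfrac12$, i.e. $w_j - w_{j+1} \le -\tfrac12$ — so the two contributions $(u_i - u_{i-1}) \ge \tfrac12$ and $-(w_j - w_{j+1}) \ge \tfrac12$ **add**, giving consecutive-difference $\ge 1$, and then by the triangle inequality (telescoping along the anti-diagonal) any two elements of the same $S_t$ have $f$-difference $\ge 1 \ge \tfrac12$ in absolute value, which simultaneously establishes that $S_t$ is a $t$-block (since $\|(x+y)-(u+v)\| \ge |f((x+y)-(u+v))| \ge 1$) and inequality \eqref{eq.supporting_separation}.

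The remaining bookkeeping — that the sizes are exactly $t = m-n+1, m-n+3, \dots, m+n-1$ and that $\uplus_t S_t = A\times B$ — is the standard counting of anti-diagonals of an $a\times b$ grid and I would dispatch it with the usual picture, noting the parities work out because consecutive anti-diagonal lengths differ by $1$ and the grid's anti-diagonals have lengths $1, 2, \dots, n, n, \dots, n, n-1, \dots, 1$ (the middle plateau of length $n$ repeated $m - n + 1$ times), which one regroups in pairs to obtain exactly $n$ blocks of the stated odd sizes. The main obstacle, as the discussion above shows, is getting the sign bookkeeping in \eqref{eq.supporting_separation} right: the point is to move along each anti-diagonal in the direction where the $A$-index increases and the $B$-index decreases, so that the $f$-increments from the two coordinates reinforce rather than cancel; once that orientation is fixed the inequality is immediate from the $\tfrac12$-separation of $\{u_i\}$ and of $\{w_j\}$.
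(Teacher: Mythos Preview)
Your decomposition is wrong: anti-diagonals $\{(i,j): i+j=\text{const}\}$ do \emph{not} give blocks. Take the simplest case $A=B=\{0,1,2\}\subset\mathbb{R}$ with $f=\mathrm{id}$, so $u_i=i-1$, $w_j=j-1$. Along the anti-diagonal $i+j=4$ you get the pairs $(x_1,y_3),(x_2,y_2),(x_3,y_1)$, whose sums are $0+2=1+1=2+0=2$ --- all equal. So this $S_3$ is not a $3$-block and \eqref{eq.supporting_separation} fails outright. The algebra error is in your expression for the consecutive difference: moving from $(i-1,j+1)$ to $(i,j)$ changes $f$ by $(u_i-u_{i-1})+(w_j-w_{j+1})$, a sum of a term $\ge\tfrac12$ and a term $\le-\tfrac12$, which can be zero. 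You wrote it as $(u_i-u_{i-1})-(w_j-w_{j+1})$, which is a different quantity. Your ``regrouping in pairs'' at the end does not fix this; merging anti-diagonals makes the block property worse, not better.

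The paper's decomposition is the hook (``L-shaped'') one from \cite{btk}: iteratively peel off the bottom row together with the rightmost column of the $a\times b$ array. The $k$-th hook is $S_{a+b-2k-1}=(m_{b-k,1},\dots,m_{b-k,a-k},m_{b-k-1,a-k},\dots,m_{1,a-k})$, listed left-to-right along the row and then bottom-to-top up the column. Along this path the $A$-index first increases (with $B$-index fixed) and then the $B$-index increases (with $A$-index fixed), so consecutive $f$-differences are $u_{i+1}-u_i\ge\tfrac12$ or $w_{j+1}-w_j\ge\tfrac12$ respectively, always with the \emph{same} sign. Telescoping then gives $f(s_j-s_i)\ge\tfrac{j-i}{2}$ for any $i<j$ in the hook, hence both $\|s_j-s_i\|\ge|f(s_j-s_i)|\ge 1$ and \eqref{eq.supporting_separation}. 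The sizes $a+b-1,a+b-3,\dots,|a-b|+1$ fall out immediately.
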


\begin{proof}
We will adapt the simple idea of \cite{btk}. %\m{pakeista 2023-05-12}
Without loss of generality we can assume that $a \ge b$. % as the proof for the case $a < b$ is analogous.

Define the following matrix
    \[ M = (m_{i,j}) := 
\begin{pmatrix}
x_1 + y_b & \dots & x_a + y_b\\
\dots & \dots & \dots \\
x_1 + y_1 & \dots & x_a + y_1
\end{pmatrix}
\]

Decompose its entries into (ordered) sets
\[
S_{a + b - 2k - 1} := (s_1, \dots, s_{a+b-2k-1}) =  (m_{b - k, 1}, \dots, m_{b-k, a-k}, \dots, m_{1, a-k}),
\]
$k \in \{0, \dots, b - 1\}$. That is, repeatedly peel off the bottommost row (left to right) together with the rightmost column (bottom to top) from the matrix until no entries are left.

The number and the lengths of the sets are as required, so it remains to show that for any indices $i$ and $j$ such that $i < j$ we have $\|s_j - s_i\| \ge 1$ and $f(s_j - s_i) \ge \frac 1 2$.

If $1 \le i < j \le a - k$ we have $\|s_j - s_i\| = \|x_j - x_i\| \ge 1$ and $f(s_j) - f(s_i) \ge \frac 1 2$ by the assumption on $A$ and similarly if $a - k \le i < j \le a+b-2k-1$ we have $\|s_j - s_i\| = \| y_{k + j - (a-k)+1} - y_{k+ i - (a-k)+1}\| \ge 1$ and $f(s_j - s_i) \ge \frac 1 2$ by the assumption on $B$.

%Furthermore, if $i \in \{1, \dots, m+n-2k-2\}$ we have $f(s_{i+1} - s_i) \ge \frac 1 2$: when $i \le b-k$ this follows from the assumption about $A$ and when $i \ge b-k$ this follows from the assumption about $B$.

Finally, if $1 \le i < a-k < j \le a+b-2k-1$ then using what we have shown
\[
    \|s_j - s_i\| \ge f(s_j - s_i) = f(s_{i+1} - s_i) + \dots + f(s_j - s_{j-1}) \ge \frac {j - i} 2 \ge 1.
\]
\end{proof}

\section{Concentration for sums of random vectors close to a line}
\label{sec.nice_proofs}

The next lemma extends the results of Jones~\cite{jones} in the case when random variables
are close to a line.

\begin{lemma}\label{lem.jones_strip}
    Let $\mathcal{S}$ be a normed space over the reals with norm  $\|\cdot\|$.
    %there is a constant $c_{\mathcal{S}}$ ($c=\frac{\sqrt 3} 4$ in the Euclidean case) such that the following holds.
    %Let $\alpha \in (0,1)$.
    Let $X_1, \dots, X_n$ be independent random vectors,
    such that $X_i$ is distributed uniformly on a $k_i$-block $S_i \subset \mathcal{S}$ (that is, $|S_i| = k_i$ and each pair of distinct points $x,y \in S_i$ satisfies $\|x - y \| \ge 1$).

    Suppose there is a line $L$ in $\mathcal{S}$ such that for each $i \in \{1,\dots,n\}$ each
    point $x \in S_i$ is at distance at most $\frac 1 8$ from $L$ (at most $\frac{\sqrt 3} 4$ if $\mathcal{S}$ is Hilbert).

    Then
    \[
        \concdiam(X_1 + \dots + X_n, 1) \le \pr(Y_1 + \dots + Y_n \in \{0, \frac 1 2\})
    \]
    where $Y_1, \dots, Y_n$ are independent random variables and $Y_i$ is uniformly distributed on $\{1, \dots, k_i\} - \frac{k_i+1} 2$, i.e., in our notation $Y_i \sim \nuopt {k_i^{-1}}$.
\end{lemma}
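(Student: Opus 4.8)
The plan is to reduce the multi-dimensional statement to a one-dimensional Littlewood–Offord estimate by projecting everything onto the line $L$ via a supporting functional, and then to apply the B.T.K.\ machinery of Lemma~\ref{lem.btk_strip2} inductively to control the projected sum. First I would fix a unit vector $\upsilon$ generating (the direction of) $L$, together with a supporting functional $f$ for $\upsilon$ (using Hahn--Banach in the general case, or $f(x) = \langle x, \upsilon\rangle$ in the Hilbert case). By Lemma~\ref{lem.supporting_norms}, for each $i$ any two distinct points $x, y \in S_i$ satisfy $|f(x) - f(y)| \ge \tfrac12$; combined with the hypothesis that each $S_i$ is a $k_i$-block, this says the hypotheses of Lemma~\ref{lem.btk_strip2} are met by any pair $S_i, S_j$.

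The key structural step is to build the comparison distribution block-by-block. I would argue by induction on $n$ that the distribution of $X_1 + \dots + X_n$, viewed through the lens of sets of points, decomposes (up to a relabelling) so that for every open set $A$ of diameter at most $1$,
\[
    \pr(X_1 + \dots + X_n \in A) \le \pr(Z_n \in \{0, \tfrac12\}),
\]
where $Z_n$ is a sum of independent uniform variables on symmetric arithmetic progressions. Concretely: $A$ has diameter $\le 1$, so $f$ maps $A$ into an interval of length $\le 1$, hence $f(A)$ meets the $\tfrac12$-separated set of values taken by the projected support in at most two points differing by exactly $\tfrac12$; this is the source of the $\{0,\tfrac12\}$ on the right. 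To run the induction I would merge $X_{n-1}$ and $X_n$: applying Lemma~\ref{lem.btk_strip2} to $A' = S_{n-1}$ and $B' = S_n$ gives a partition of $S_{n-1} \times S_n$ (as a multiset of points $x+y$) into blocks $S_t$ of sizes $t$ running over $m-n'+1, m-n'+3, \dots, m+n'-1$ with $m = \max(k_{n-1}, k_n)$, $n' = \min(k_{n-1}, k_n)$, and crucially each $S_t$ still has its points $\tfrac12$-separated under $f$ by \eqref{eq.supporting_separation}. Conditioning on which block $X_{n-1}+X_n$ lands in, each block is again a uniform distribution on a block close to $L$ (distances from $L$ at most double, still within $\tfrac14$ or $\tfrac{\sqrt3}{2}$ — here I would need to check the constant works, since combining two points at distance $\le \tfrac18$ from $L$ gives distance $\le \tfrac14$; in the Hilbert case $\tfrac{\sqrt3}{4}+\tfrac{\sqrt3}{4} = \tfrac{\sqrt3}{2}$, which is too large, so the merging must instead be handled purely at the level of the functional $f$, not of distance to $L$). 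This suggests the cleaner route: after the first projection, work entirely with $f$-values on the real line, never returning to $\mathcal S$, so that the inductive objects are genuine one-dimensional block distributions and Jones's original one-dimensional argument applies verbatim.

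So the refined plan is: (1) project via $f$ to get real random variables $f(X_i)$ whose supports are $\tfrac12$-separated $k_i$-point sets; (2) observe $\pr(X_1+\dots+X_n \in A) \le \pr\big(f(X_1)+\dots+f(X_n) \in I\big)$ for some interval $I$ of length $\le 1$, hence at most $\sup_{c}\pr\big(\sum f(X_i) \in \{c, c+\tfrac12\}\big)$; (3) rescale by $2$ so supports become $2$-separated integer-like sets and invoke the known one-dimensional B.T.K.-based bound of Jones~\cite{jones} (or re-derive it using Lemma~\ref{lem.btk_strip2} directly), which gives exactly $\pr(Y_1+\dots+Y_n \in \{0,\tfrac12\})$ with $Y_i \sim \nuopt{k_i^{-1}}$; note that $\sum \E Y_i^2$ matches $\tfrac14\sum(k_i^2-1)$, the variance of the rescaled projected extremal configuration, so the one-dimensional optimum transfers. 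The main obstacle I anticipate is step (2)–(3): ensuring that the two-point set $\{c, c+\tfrac12\}$ arising from a diameter-$1$ set in $\mathcal S$ is compatible with the $\{0,\tfrac12\}$ structure of the one-dimensional extremal problem, and verifying that the one-dimensional comparison inequality of Jones indeed holds in the two-point ($\{c,c+\tfrac12\}$) formulation rather than just for single-point concentration — this is precisely where the B.T.K.\ chain condition with the extra separation guarantee \eqref{eq.supporting_separation} does the work, by letting one prune the sum one variable at a time while keeping the "window" of width $\tfrac12$ intact.
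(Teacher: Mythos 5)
Your first attempt was nearly on target: you correctly identified that naively tracking distance-to-$L$ under merging blows up the constant, and you correctly guessed that the fix lives ``at the level of the functional $f$.'' But the ``refined plan'' you pivoted to --- project everything to $\mathbb{R}$ via $f$ and run a one-dimensional Littlewood--Offord argument on $f(X_1)+\dots+f(X_n)$ --- has a genuine gap and cannot give the tight bound. The problem is that projection destroys the $1$-separation: Lemma~\ref{lem.supporting_norms} only guarantees $|f(x)-f(y)|\ge\tfrac12$, not $\ge 1$. Already for $n=1$ this is fatal. In $(\mathbb{R}^2,\|\cdot\|_1)$, take $S_1 = \{(0,\tfrac18),(\tfrac34,-\tfrac18)\}$: this is a $2$-block at distance $\tfrac18$ from the $x$-axis, but $f(S_1)=\{0,\tfrac34\}$ sits entirely inside an open interval of length $<1$, so $\pr\big(f(X_1)\in I\big)=1$ while the target bound is $\tfrac12$. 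A factor of $2$ is lost at every step. Your step (2)--(3) also relies on the implicit claim that $\pr\big(\sum f(X_i)\in I\big)\le\sup_c\pr\big(\sum f(X_i)\in\{c,c+\tfrac12\}\big)$, which presumes the support of $\sum f(X_i)$ is $\tfrac12$-separated; it isn't (e.g.\ $\{0,0.6\}+\{0,0.7\}$ has consecutive atoms $0.1$ apart).

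What the paper actually does is keep the decomposition living in $\mathcal{S}$, never passing to $\mathbb{R}$. Lemma~\ref{lem.btk_strip2} does \emph{not} take distance-to-$L$ as a hypothesis; it takes (a) the $1$-block condition in $\|\cdot\|$ and (b) $\tfrac12$-separation of the $f$-values. Its conclusion preserves both properties for each output block $S_t$ (property (b) is exactly the additional clause~\eqref{eq.supporting_separation}). So the iteration $((S_1\times S_2)\times S_3)\times\cdots$ is self-closing: distance-to-$L$ is used once, via Lemma~\ref{lem.supporting_norms}, to establish (b) for the original $S_i$, and then never again. Crucially the blocks are $1$-separated \emph{in} $\mathcal{S}$, so an open set of diameter $\le 1$ meets each block in at most one point --- that is where the tight constant comes from, and it is precisely what projection throws away. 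The final count of blocks matches Jones's symmetric-chain count of the middle layer of $\prod_i\{0,\dots,k_i-1\}$, giving $\pr(Y_1+\dots+Y_n\in\{0,\tfrac12\})$ on the nose.
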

\begin{proof}
    We use the proof of Theorem~1 of Jones \cite{jones}:
    %, which
    %is based on coupling the steps of the simple inductive proof of de Bruijn, van Ebbenhorst Tengbergen and Kruyswijk \cite{btk} that we used in the proof of Lemma~\ref{lem.btk_strip2} with the analogous steps used to prove the decomposition of $\{0, \dots, k_1-1\} \times \dots \times \{0, \dots, k_n-1\}$ into 
    %${\lceil \frac {\sum_{i=1}^n k_i} 2 \rceil$
    %the minimal number of
    %symmetric chains, see Proposition 1 of \cite{jones}.
    the B.T.K. chain condition in our case is ensured by Lemma~\ref{lem.btk_strip2}.
    %Note that although the definition of the B.T.K chain condition in \cite{jones} is symmetric
    %and in our case it is not, the proof of Theorem~1 in \cite{jones} still
    %works. % when we don't require 
    %associativity.
    %\m{check..}

    Specifically, we may assume without loss of generality that $L$ contains the origin. Now note that by our assumption and Lemma~\ref{lem.supporting_norms},
    each set $S_i$ is not only a block, but also for each distinct $x,y \in S$ (\ref{eq.supporting_separation})
    holds. Thus for $n \ge 2$, we can apply Lemma~\ref{lem.btk_strip2} to get a decomposition
    of $S_1 \times S_2$ into blocks satisfying the same property. 
    We can then iteratively apply Lemma~\ref{lem.btk_strip2} to each resulting block $A$ and $B=S_3$,
    and so on, until we get a decomposition of $S_1 \times \dots \times S_n$ into blocks.

    As noted by Jones, the number and the sizes of blocks obtained in each step corresponds exactly
    to the number and the sizes of chains obtained in the analogous decomposition, using analogous proof steps, of $\prod_{i=1}^n \{0, \dots, k_i-1\}$ into ``symmetric chains'', see Proposition~1 of \cite{jones} and \cite{btk}.
    The final number of chains (and blocks in our case) $W_{\left \lceil \frac N 2  \right \rceil}$  is shown to be
    equal to the cardinality of the ``middle layer'', which can be defined as
    \[
       \left \{x: \, x \in \prod_{i=1}^n \{0, \dots, k_i-1\} \mbox{ and } \sum_{i=1}^n x_i = \left \lceil \frac N 2  \right \rceil \right \}, \quad N = \sum_{i=1}^n (k_i - 1).
    \]
    Each open ball of diameter at most $1$ contains at most 1 point from a block. Thus the original proof of \cite{jones} yields 
    \[
        \conc(X_1 + \dots + X_n, 1) \le \frac { W_{\left \lceil \frac N 2  \right \rceil}} {\prod_{i=1}^n k_i} = \pr(Y_1' + \dots + Y_n' = \left \lceil \frac N 2 \right \rceil),
    \]
    where $Y_i'$ is uniform on $\{0, \dots, k_i-1\}$ and $\{Y_i'\}$ are independent.
    %It is easy to check that 
    We have $Y_i \sim Y_i' - \frac {{k_i}-1} 2$,
    so the right side is equal to $\pr(Y_1 + \dots + Y_n \in \{0, \frac 1 2\})$.

    Since each open set of diameter at most $1$ also contains at most one element 
    % of a set which
    %has all pairwise distances at least 1
    from a block, %using Lemma~\ref{lem.btk_strip2}
    we obtain
    a stronger inequality with $\concdiam(\cdot, 1)$ instead of $\conc(\cdot,1)$.
\end{proof}

\medskip

%Recall that here $\concdiam$ and the distance $d(\cdot, L)$ are defined w.r.t. $\|\cdot\|$.

For a sequence $\alpha_1, \dots, \alpha_n \in (0,1]$ define $t(\alpha_1, \dots, \alpha_n):=\pr(Y_1+\dots+Y_n \in \{0, \frac 1 2\})$, where $Y_i \sim \nuopt {\alpha_i}$ and $Y_1, \dots, Y_n$ are independent. 

Recall that $\mu_1 * \mu_2$ denotes the convolution of $\mu_1$ and $\mu_2$ and $*_{i=1}^n \mu_i$ denotes $\mu_1 * \dots * \mu_n$.

\medskip

\begin{proofof}{Theorem~\ref{close_to_line}}
    We first prove ii). 
    Let $c_{\mathcal S}$ be $\frac{\sqrt 3} 4$ if $\mathcal{S}$ is Hilbert and $\frac 1 8$ otherwise. Let $t = \max_i \sup_\omega d(X_i(\omega), L)$. Fix $\epsilon \in (0, \frac {\min_i \alpha_i} 2)$. By Lemma~\ref{lem.discrete_approx_try} (and the construction given in Lemma~\ref{lem.weak_discrete}) 
    %and Theorem~2.1 of \cite{billingsley_1999} 
    there exist $\delta$, $0 < \delta < \frac {c_{\mathcal{S}}} {t} - 1$ and random variables $Z_i$, $i \in \{1, \dots, n\}$ such that 
    \begin{enumerate}
        \item[(a)] $Z_i$ is uniformly distributed on a finite multiset $S_i$ which lies in the support of $(1+\delta)X_i$, so  $\sup_\omega d(Z_i, (1+\delta)L) \le c_{\mathcal{S}}$,
        \item[(b)] $\concdiam(Z_i,1) \le \alpha_i +  \epsilon$ and
        \item[(c)] $\concdiam(Z_1 + \dots + Z_n, 1) \ge \concdiam(X_1 + \dots + X_n, 1) - \epsilon$.
        %\item[(d)]  %\le \pr(d(Y_i, (1+\delta)L) \ge (1+\delta)t) \le \frac \epsilon {2n}$.
    \end{enumerate}            
    Let $\mu_i \sim \nu(S_i)$ be the distribution of $Z_i$.
    By our decomposition, Lemma~\ref{lem.decomposition}, we have for each $i \in \{1, \dots, n\}$
    \[
        \mu_i = w_{i,1} \mu_{i,1} + ... + w_{i,K_i} \mu_{i,K_i}
    \]
    where $w_{i,j} = \frac {k_{i,j}} {N_i}$, $N_i = |S_i|$, $K_i \le (\alpha_i + \epsilon) N_i$ and 
    $\mu_{i,j}$ % \sim \nu(S_{i,j})$
    is the uniform
    measure on a \emph{set} $S_{i,j}$ of size $k_{i,j}$ with each pair of points at distance at least~1 (a $k_{i,j}$-block).

    We have
    \[
        \mu:=*_{i=1}^n \mu_i = \sum \left(\prod_{i=1}^n w_{i,j_i}\right) *_{i=1}^n \mu_{i, j_i}.
    \]
    where the sum is over all $(j_1, \dots, j_n) \in \{1, \dots, K_1\} \times \dots \times \{1, \dots, K_n\}$.

    Since each $\mu_{i, j_i}$ is uniform on
    %sets with pairwise vertices at distance at least 1,
    a $k_{i,j_i}$-block, 
    by (a) and Lemma~\ref{lem.jones_strip} for any open set $B$ of diameter at most~$1$:
    \[
        (*_{i=1}^n \mu_{i, j_i})(B) \le (*_{i=1}^n \nu^1_{k_{i,j_i}}) (\{0,\frac 1 2\})
    \]
    where $\nu^1_s$ is a uniform measure on $\{1, \dots, s\} - \frac {s+1} 2$. So
    \[
        \mu(B) \le  (*_{i=1}^n \mu_i^1) (\{0, \frac 1 2\})
    \]
    where
    \[
        \mu_i^1 = w_{i,1} \nu^1_{k_{i,1}} + \dots + w_{i,K_i} \nu^1_{k_{i,K_i}}.
    \]
    Now the concentration of $\mu_i^1$ satisfies
    \begin{align*}
        \concdiam(\mu_i^1, 1) \le \sum_{j=1}^{K_i} w_{i,j} \frac 1 {k_{i,j}}
        = \sum_{j=1}^{K_i} \frac {k_{i,j}} {N_i} \frac 1 {k_{i,j}} = \frac {K_i} {N_i} \le \alpha_i+\epsilon.
    \end{align*}
    We have reduced the problem to the 
    %discrete one-dimensional
    case $\mathcal{S} = \mathbb{R}^1$.
    By Theorem~2.1.3 of Juškevičius~\cite{tj} for any $n \ge 1$:
    \[
        \concdiam(X_1 + \dots + X_n, 1) \le \concdiam(*_{i=1}^n \mu_i^1, 1) \le t(\alpha_1+\epsilon, \dots, \alpha_n+\epsilon).
    \]
    %We have proved that $\concdiam(X_1+\dots+X_n, 1) \le t(\alpha_1 + \epsilon, \dots, \alpha_n+\epsilon) + \epsilon$. 
    Since $t$ is continuous and our inequality holds for any $\epsilon \in (0, \frac {\min_i \alpha_i} 2)$, the claim ii) follows.

    Finally, to prove the theorem in the case i), notice that in this case
    the support of each random variable as well as the support of their sum is
    contained in a countable set.
    The closure of a countable set is separable. So i) follows from ii).
\end{proofof}

\section{A corollary of a result of Hal{\'a}sz}
\label{sec.halasz}

In this section we temporarily switch to the convention of Hal{\'a}sz \cite{halasz} and consider % $\conc(\cdot, 2)$ instead of $\conc(\cdot, 1)$.
concentration for diameter 2 rather than 1.
Results in this section concern only $\mathbb{R}^d$ with the Euclidean norm $\|\cdot\|_2$ and distance $d_2(x,y) = \| x-y\|_2$.

\begin{theorem}{(Hal{\'a}sz (1977), Theorem~4 in~\cite{halasz})}\label{thm.halasz}
    Let $X_k'$ be independent of $X_k$ but of the same distribution, $X_k^* = X_k - X_k'$ and denote by $F_k(x)$ the distribution function of $X^*_k$. Let 
    \[
        F(x) = \sum_{k=1}^n F_k(x).
    \]
    Introducing the notation
    \[
        a_* = \min(a,1) \quad (a \ge 0)
    \]
    let
    \[
        D=\inf_{|e|=1} \int_{\mathbb{R}^d} |\langle x, e \rangle|_*^2 dF(x)
    \]
    and 
    \[
        \mu = \sup_{y \in \mathbb{R}^d} \sum_{k=1}^n \pr (\|X^*_k - y\|_2<1).
    \]
    We have 
    \[
        \conc(\sum_{k=1}^n X_k, 2) \le c_{H,d}\, \mu n^{-1} D^{-d/2} \quad (n \ge 8).
    \]
    The constant $c_{H,d}$ depends on $d$ only.
\end{theorem}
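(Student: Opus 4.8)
This statement is Theorem~4 of Hal\'asz~\cite{halasz}; strictly speaking its ``proof'' in this paper is a reference, so what follows is a sketch of how one reconstructs the argument, which is purely Fourier-analytic. The plan is to pass from the concentration function of $S=\sum_{k=1}^n X_k$ to its characteristic function $\varphi_S(t)=\E e^{i\langle t,S\rangle}$ via an Esseen-type smoothing inequality: there is a dimensional constant so that every ball $B$ of diameter $2$ satisfies $\pr(S\in B)\lesssim_d \int_{\|t\|_2\le 1}|\varphi_S(t)|\,dt$, and hence $\conc(S,2)\lesssim_d \int_{\|t\|_2\le1}|\varphi_S(t)|\,dt$. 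Everything then reduces to showing this integral is $\lesssim_d \mu\,n^{-1}D^{-d/2}$.

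The second step is to symmetrize and factor. Independence gives $|\varphi_S(t)|^2=\prod_{k=1}^n|\varphi_{X_k}(t)|^2=\prod_{k=1}^n\varphi_{X_k^*}(t)=\prod_{k=1}^n\E\cos\langle t,X_k^*\rangle$, using that each $X_k^*=X_k-X_k'$ is symmetric; each factor lies in $[0,1]$, so $1-x\le e^{-x}$ yields $|\varphi_S(t)|^2\le\exp\!\big(-\sum_{k=1}^n\E\,(1-\cos\langle t,X_k^*\rangle)\big)$. On the part of the integration region where the inner products $\langle t,X_k^*\rangle$ are not too large, $1-\cos u$ is comparable to $|u|_*^2=\min(u^2,1)$, and then, writing $t=\|t\|_2 e$ with $\|e\|_2=1$ and using $|\lambda u|_*^2\ge\lambda^2|u|_*^2$ for $\lambda\le1$, the exponent is $\gtrsim\|t\|_2^2\sum_k\E|\langle e,X_k^*\rangle|_*^2=\|t\|_2^2\int_{\mathbb R^d}|\langle x,e\rangle|_*^2\,dF(x)\ge\|t\|_2^2\,D$. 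Thus $|\varphi_S(t)|\lesssim e^{-c\|t\|_2^2 D}$ there, and $\int_{\mathbb R^d}e^{-c\|t\|_2^2D}\,dt\asymp_d D^{-d/2}$, which already gives the weaker bound $\conc(S,2)\lesssim_d D^{-d/2}$.

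The main obstacle is extracting the extra factor $\mu\,n^{-1}$ (which is $\le1$, so the improvement is genuine). This is where $\mu$ must be used: it bounds, uniformly in $y$, the number of indices $k$ with $X_k^*$ within distance $1$ of $y$, so for a typical direction/scale only $O(\mu)$ of the summands can be ``clustered'', while the remaining ones each contribute a definite amount to $\sum_k\E\,(1-\cos\langle t,X_k^*\rangle)$, forcing decay faster than the naive Gaussian rate; a dyadic decomposition of the ball $\{\|t\|_2\le1\}$ combined with the covering estimate furnished by $\mu$ turns this into the prefactor $\mu/n$. The two technical points one must handle carefully are the truncation $|\cdot|_*$ — i.e.\ the contribution of large inner products, where $1-\cos u$ is not comparable to $\min(u^2,1)$ and one argues instead via distance to $2\pi\mathbb Z$ — and making the direction-uniform infimum $D$ interact correctly with the dyadic/covering bookkeeping; assembling these yields the stated inequality for $n\ge 8$ with a constant $c_{H,d}$ depending only on $d$. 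For our purposes it suffices to quote the result from~\cite{halasz}.
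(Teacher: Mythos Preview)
Your proposal correctly identifies that the paper does not prove this theorem at all: it is simply quoted as Theorem~4 of Hal\'asz~\cite{halasz} and then applied. Your final sentence (``it suffices to quote the result'') is therefore exactly the paper's ``proof''. The Fourier-analytic sketch you add is a reasonable outline of Hal\'asz's method, though the crucial and delicate step---extracting the extra factor $\mu n^{-1}$ from the characteristic-function integral---is only described impressionistically; in Hal\'asz's argument this comes from a careful rearrangement/regularization of the summands rather than a direct dyadic covering, so if you ever need the full proof you should consult~\cite{halasz} rather than rely on the sketch.
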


\begin{lemma}\label{lem.halasz_corollary2} 
    Let $d \ge 2$ be an integer. Let $\xi_2(x) = x$ and let $\xi_d(x) = 1$ for $d > 2$.
    
    Let $c \in (0,1)$, $\delta > 0$, $\epsilon > 0$ and $n \ge 8$.
    If $X_1, \dots, X_n$ are independent random vectors in $\mathbb{R}^d$ with $\conc(X_i, 2) \le \alpha_i$, $\alpha_i \in (0,1]$, and $\frac {\sum \alpha_i} n = \bar{\alpha}$ then
    either $\concdiam(\sum_{i=1}^n X_i, 2) < \delta$
    or there exists a line $L$ in $\mathbb{R}^d$ such that $L$ contains the origin and for
    all but at most 
    \[
        9 (d-1)^3 c_H c^{-2}  \delta^{-1}  \epsilon^{-1} \xi_d(\bar{\alpha})
    \]
    indices $i \in \{1, \dots, n\}$ we have $\pr(d_2(X_i, L + a_i) \ge c) \le \epsilon$ with some $a_i \in \mathbb{R}^d$. Here $c_H=c_{H,2}$ from Theorem~\ref{thm.halasz}.
\end{lemma}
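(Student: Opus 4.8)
The plan is to apply Hal\'asz's Theorem~\ref{thm.halasz} in its contrapositive form: if $\concdiam(\sum X_i, 2) \ge \delta$ then the quantity $D$ that appears there cannot be too large, and a small value of $D$ forces a direction $e$ along which all but a few of the symmetrized variables $X_i^*$ are concentrated near a line. First I would set up notation: let $X_i'$ be an independent copy of $X_i$, $X_i^* = X_i - X_i'$, and $F = \sum_k F_k$ as in the theorem. Note $\conc(X_i,2) \le \alpha_i$ gives a bound on $\mu = \sup_y \sum_k \pr(\|X_k^* - y\|_2 < 1)$, essentially $\mu \le \sum_i \alpha_i = \bar\alpha n$ (each term $\pr(\|X_k^* - y\|_2 < 1) \le \conc(X_k, 2) \le \alpha_k$ since a ball of radius $<1$ has diameter $<2$). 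Also $\concdiam(\sum X_i, 2) \ge \delta$ together with a standard symmetrization inequality gives $\conc(\sum X_i^*, 2) \ge \delta^2$ or so — actually one wants $\conc(\sum X_i, 2)^2 \le \conc(\sum X_i^*, 2)$ type bound; I would invoke the usual fact that $\pr(\sum X_i^* \in B - B') \ge \pr(\sum X_i \in B)^2$ for $B$ the near-optimal set, so $\conc(\sum X_i^*, 2 \cdot \mathrm{something}) \ge \delta^2$, adjusting the diameter constant (this is where the factor $2$ vs diameter bookkeeping must be done carefully but routinely).

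Given these, Hal\'asz's bound $\conc(\sum X_k, 2) \le c_{H,d}\, \mu\, n^{-1} D^{-d/2}$ applied to the symmetrized sum, rearranged, yields $D^{d/2} \le c_{H,d}\, \mu\, n^{-1} (\conc(\sum X_i^*, 2))^{-1} \le c_{H,d}\, \bar\alpha\, \delta^{-2}$ up to constants, hence $D \le (\text{const}\cdot \bar\alpha \delta^{-2})^{2/d}$; for $d = 2$ this is linear in $\bar\alpha\delta^{-2}$, for $d > 2$ it is a smaller power, which is exactly the role of $\xi_d$ — I would absorb the $d$-dependence into the statement by using $\xi_d(\bar\alpha)$ as the upper bound placeholder (for $d=2$, $D \lesssim \bar\alpha\delta^{-1}\epsilon^{-1}$ type; for $d>2$, $D \lesssim \delta^{-1}\epsilon^{-1}$, i.e. bounded). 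By definition $D = \inf_{|e|=1} \int |\langle x, e\rangle|_*^2 dF(x) = \inf_e \sum_k \E |\langle X_k^*, e\rangle|_*^2$, so there is a unit vector $e$ achieving (or nearly achieving) this infimum with $\sum_k \E |\langle X_k^*, e\rangle|_*^2 \le D$.

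Now let $L$ be the line through the origin spanned by $e$ (more precisely, the orthogonal complement consideration: $d_2(X_i, L + a_i)$ measures the component of $X_i$ orthogonal to $e$ after centering — actually one wants the distance to a line \emph{parallel} to... hmm, here the relevant quantity is that $\langle X_i^*, e\rangle$ small means $X_i$ and $X_i'$ have nearly the same $e$-coordinate, which by a further argument lets one place $X_i$ near a translate of the hyperplane $e^\perp$; but the statement wants a \emph{line} $L$, so for $d=2$ the hyperplane $e^\perp$ is a line and one is done, while for $d \ge 3$ the line must be along $e$ itself and the distance $d_2(X_i, L+a_i)$ is the $e^\perp$-component — I need to recheck which is intended). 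Taking the intended reading, I would use Markov's inequality: $\sum_k \E |\langle X_k^*, e\rangle|_*^2 \le D$ means that for all but at most $D \epsilon^{-1} / (\text{something involving } c)$ indices $k$, $\E|\langle X_k^*, e\rangle|_*^2 \le c^2 \epsilon / (\text{const})$, and then a second Markov/Chebyshev step passes from the symmetrized variable to $X_k$ itself: if $\E|\langle X_k^*, e\rangle|_*^2$ is small then $\langle X_k, e\rangle$ is concentrated near some value $\langle a_k, e\rangle$, giving $\pr(d_2(X_k, L+a_k) \ge c) \le \epsilon$ after choosing $a_k$ to center things. The number of exceptional indices is then $\lesssim D c^{-2} \epsilon^{-1} \lesssim c_H c^{-2} \epsilon^{-1} \delta^{-1} \xi_d(\bar\alpha)$, and tracking the explicit constants $(d-1)^3$ and $9$ through the $d/2$-th root and the two Markov steps gives the claimed bound $9(d-1)^3 c_H c^{-2} \delta^{-1}\epsilon^{-1}\xi_d(\bar\alpha)$.

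The main obstacle I expect is the two-stage descent from the symmetrized sum back to the individual $X_i$: getting from ``$\conc(\sum X_i, 2) \ge \delta$'' to a usable lower bound on $\conc$ of the symmetrized sum with the right diameter, and then from ``$\E|\langle X_i^*, e\rangle|_*^2$ small'' to ``$X_i$ lies near a translate of $L$ with probability $\ge 1-\epsilon$'' — the latter needs care because $|\cdot|_*$ truncates at $1$, so small $\E|\langle X_i^*,e\rangle|_*^2$ only controls the probability of moderate deviations, not arbitrarily large ones, but since we only need concentration at scale $c < 1$ this truncation is harmless. The bookkeeping of all multiplicative constants (the $c_{H,d}$ for general $d$ versus $c_H = c_{H,2}$ in the statement, the powers of $d-1$, and the diameter-2-versus-1 rescaling) is tedious but routine; the conceptual content is entirely in the application of Hal\'asz plus symmetrization plus two Markov inequalities.
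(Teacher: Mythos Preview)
Your overall shape---bound $D$ via Hal\'asz, then count the indices $i$ with $D_i > c^2\epsilon$---is right, but two key steps diverge from the paper in ways that matter for the stated bound.

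First, the symmetrization detour is unnecessary and costs you the wrong power of $\delta$. Hal\'asz's theorem already applies to $\sum X_i$ directly; the symmetrized variables $X_i^*$ appear only inside the definition of $D$, not as the summands whose concentration is bounded. The paper simply observes $\concdiam(\cdot,2)\le 9\,\conc(\cdot,2)$ (cover a disk of radius~$2$ by nine unit disks), so $\concdiam(\sum X_i,2)\ge\delta$ gives $\conc(\sum X_i,2)\ge\delta/9$, and then Hal\'asz with $\mu\le\bar\alpha n$ yields $D\le 9c_H\bar\alpha\delta^{-1}$. Your route through $\conc(\sum X_i^*,2)\ge\delta^2$ produces $\delta^{-2}$ and does not match the claim.

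Second, and more substantially, for $d>2$ the paper does \emph{not} apply Hal\'asz in $\mathbb{R}^d$ and take a $d/2$-th root. Instead it projects onto a sequence of $2$-planes $P_1,\dots,P_{d-1}$ and applies the already-proved $d=2$ case on each plane, with $c$ replaced by $c/\sqrt{d-1}$ and $\epsilon$ by $\epsilon/(d-1)$; the $d-1$ resulting ``bad'' directions $\tilde e_1,\dots,\tilde e_{d-1}$ are chosen to be orthogonal, and $L$ is the line along the remaining direction $\tilde e_d$. This explains both why the constant is $c_H=c_{H,2}$ (never $c_{H,d}$) and why $\xi_d(\bar\alpha)=1$ for $d>2$: under projection one only has the trivial bound $\conc(\pi_j X_i,2)\le 1$, so the $\bar\alpha$ saving is lost. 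Your reading of $\xi_d$ as coming from $\bar\alpha^{2/d}$ is not what is happening, and your approach would not recover the specific factor $9(d-1)^3$ or the use of $c_{H,2}$.

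Finally, the step you flagged as the main obstacle---passing from small $\E|\langle X_i^*,e\rangle|_*^2$ to concentration of $X_i$ near a translated line---is handled cleanly by conditioning on $X_i'$: one has $\E|\langle X_i^*,e\rangle|_*^2 \ge \inf_{a'}\E|\langle X_i-a',e\rangle|_*^2$, and the infimum is attained at some $a_i$. With $L=e^\perp$ (a line, since $d=2$) this equals $\E\, d_2(X_i,L+a_i)_*^2 \ge c^2\,\pr(d_2(X_i,L+a_i)\ge c)$, so only one Markov step is needed, not two.
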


Hal{\'a}sz \cite{halasz} notes after his theorem: \textit{``$D$ measures to what extent our distributions are $d$-dimensional. $d$-dimensionality can be measured even in $R^{d_1}$, $d_1 > d$: $|e|=1$ should be replaced by subspaces of dimension $d_1 - d+1$ and $\langle x,e\rangle$ by projection of $x$ on the subspace.''}. This remark implies that Lemma~\ref{lem.halasz_corollary2} should be true with $\bar{\alpha}$ in place of $\xi_d$ for all $d$ and some other $d$-dependent constant. The proof of the remark is not given in \cite{halasz}, and it seems to require a certain technical effort to produce that proof, hence we state our results with $\xi_d(\bar\alpha)$ rather than $\bar{\alpha}$, which is likely suboptimal for $d \ge 3$.

\medskip

\begin{proofof}{Lemma~\ref{lem.halasz_corollary2}}
    Assume $n \ge 8$. Let us first prove the claim for $d=2$. A disk of radius $2$ can be easily covered
    %\footnote{For an optimal construction see, e. g., \url{https://mathoverflow.net/questions/97939/covering-disks-with-smaller-disks}.}
    by 9 open disks of radius 1. 
    %(or by 7 as suggested \href{https://mathoverflow.net/questions/97939/covering-disks-with-smaller-disks}{J. R. Noche on MathOverflow})
    To see this, center each of the smaller disks on a point in the 3x3 grid with step size, for example, $\sqrt{2} - 0.01$.
    So $\concdiam(\cdot, 2) \le \conc(\cdot, 4) \le 9\conc(\cdot, 2)$.
    Using Theorem~\ref{thm.halasz} and the notation given in its statement %and  $\mu \le \sum_{i=1}^n Q(X^*_i,2) \le \sum_{i=1}^n Q(X_i,2) \le n \bar{\alpha}$
    \begin{equation}\label{eq.halasz_eq}
        \conc(\sum_{i=1}^n X_i, 2) \le c_H \bar{\alpha} D^{-1}.
    \end{equation}
    Write
    \[
        D_i= \E |\langle X^*_i, e \rangle|_*^2
    \]
    for the unit vector $e$ that achieves the minimum in the definition of $D$, so that $D = \sum_{i=1}^n D_i$.
    If $D > 9 c_H \delta^{-1} \bar{\alpha}$
    then $\conc(\sum_{i=1}^n X_i, 2) < \delta / 9$ and the claim follows.

    Otherwise there can be at most 
    $9 c_H c^{-2}  \delta^{-1} \epsilon^{-1} \bar{\alpha}$
    indices $i$ such that 
    \begin{equation} \label{eq.Di}
        D_i \le c^{2} \epsilon
    \end{equation}
    does not hold.

    Let $L$ be the line containing the origin and orthogonal to $e$. 
    Recall that $X_i^* \sim X_i - X_i'$ where $X_i'$ is an independent copy of $X_i$.
    By conditioning and the fact that $|x|_*^2 \ge \min(c^2, 1) \mathbb{I}_{|x|\ge c} =  c^2 \mathbb{I}_{|x|\ge c}$
    \begin{align*}
        &D_i = \E | \langle X^*_i, e \rangle |_*^2
        \\ & \ge \inf_{a' \in \mathbb{R}^2} \E | \langle X_i - a', e \rangle|_*^2
       % \\ & = \inf_{a' \in \mathbb{R}^d} \E d_2(X_i - a', L_e)_*^2
       % \\ & = \inf_{a' \in \mathbb{R}^d} \E d_2(X_i, L_e + a')_*^2
        \\ & = \E d_2(X_i, L + a_i)_*^2
        \\ & \ge \pr(d_2(X_i, L + a_i) \ge c) c^2
    \end{align*}
    for a vector $a_i \in \mathbb{R}^d$ that realises the infimum. 
    Thus for any $i$ such that (\ref{eq.Di}) holds we have
    \[
        \pr(d_2(Y_i, L + a_i) \ge c) \le c^{-2} D_i \le \epsilon,
    \]
    which completes the proof for the case $d=2$.

    Let us now prove the claim for $d > 2$. If $\concdiam(X_1+\dots+X_n,2) < \delta$ we are done.
    Otherwise, choose any plane $P_1 \subset \mathbb{R}^d$ containing the origin and let $\pi_1$ be 
    the projection of $\mathbb{R}^d$ to $P_1$.
    For a random vector $X$ in $\mathbb{R}^d$ we have $\concdiam(\pi_1 X, 2) \ge \concdiam(X, 2)$.
    Thus $\concdiam(\pi_1(X_1 + \dots + X_n), 2) \ge \delta$. For the summands
    we only have the trivial upper bound $\concdiam(\pi_1 X_i, 2) \le 1$. 

    Let $m=9 (d-1)^2 c_H c^{-2}  \delta^{-1}  \epsilon^{-1}$.
    By the already proved case $d=2$ for $j=1$ there is %a line $L_j$,
    a unit vector $\tilde{e}_j \in P_j$ %(orthogonal to $L_j$)
    and vectors $a_{j,1}, \dots, a_{j,n} \in P_j$ such that
    such that for all but at most $m$
    %random vectors
    %$\pi_1 X_1, \dots, \pi_1 X_n$ we have $\pr(d_2(\pi_1 X_i, L_1) \ge \frac c{\sqrt {d-1}}) \le \frac {\epsilon} {d-1}$. 
    %Equivalently, for $j=1$
    indices $i \in \{1, \dots, n\}$
    \begin{equation}\label{eq.projection_plane}
        \pr(|\langle X_i - a_{j,i}, \tilde{e}_j \rangle| \ge \frac c {\sqrt{d-1}}) \le \frac {\epsilon} {d-1}.
    \end{equation}
    Let us do similarly for each $j=2, \dots, d-1$: at step $j$ let 
    $\hat{e}_{j-1}$ be a unit vector in $P_{j-1}$ orthogonal to $\tilde{e}_{j-1}$. Since $j \le d-1$
    we can pick a unit vector $e_j \in \mathbb{R}^d$ orthogonal to $\tilde{e}_1, \dots, \tilde{e}_{j-1}$ and $\hat{e}_{j-1}$.
    Consider a plane $P_j \subset \mathbb{R}^d$ generated 
    by $e_j$ and $\hat{e}_{j-1}$. Again there is a unit vector $\tilde{e}_j$ and vectors $a_{j,1}, \dots, a_{j,n}$ in $P_j$ such that for all but
    at most $m$ indices $i$ (\ref{eq.projection_plane}) holds.
    Finally, define $\tilde{e}_d$ to be a unit vector orthogonal to $\tilde{e}_1, \dots, \tilde{e}_{d-1}$.

    We can assume that $\tilde{e}_1, \dots, \tilde{e}_d$ are the standard unit vectors $(1, 0,\dots, 0)^T$, $\dots$, $(0, \dots, 0, 1)^T$. Define $a_i = ((a_{1,i})_1, \dots, (a_{d-1, i})_{d-1}, 0)^T$. Now in (\ref{eq.projection_plane}) we can replace $a_{j,i}$ by $a_i$ and in the new coordinate  system it is equivalent to
    %for each $j$, or, equivalently %. So for each $j \in \{1,\dots,d-1\}$ all but at most $m$ indices $i$ satisfy
    %Furthermore (\ref{eq.projection_plane}) is equivalent to
    \[
        \pr(|(X_i - a_i)_j| \ge \frac {c} {\sqrt{d-1}}| \le \frac {\epsilon}{d-1}.
    \]
    %Thus all but at most $(d-1) m$ indices $i$ satisfy this equation for all $j$.
    It follows by the union bound that if $L$ is the line generated by $\tilde{e}_d$, then for all but at most $(d-1) m$ indices
    \[
        \pr(d_2(X_i - a_i, L) \ge c) \le \epsilon.
    \]
\end{proofof}

\section{The general result and its proof}
\label{sec.general}

When $\alpha^{-1}$ is integer $\nu^*_\alpha$ is uniform and its variance is $f(\alpha)$, where $f(x)=\frac 1 {12} (x^{-2} - 1)$. From (\ref{eq.nu_star}) we get $\alpha = \frac 1 {k+1} + \frac p {k(k+1)}$, where $k=\lfloor \alpha^{-1} \rfloor$. So for any $\alpha \in (0, 1]$ the variance $V_\alpha^*$ of $\nu^*_{\alpha}$ is a piecewise linear function of $\alpha$ which agrees with $f$ at points $k^{-1}$, $k\in\{1,2,\dots\}$
and is its linear interpolation between subsequent such points. It can also be written as
\begin{equation}\label{eq.V_alpha}
    V_\alpha^*  =  \frac 1 {12} \lfloor \alpha^{-1} \rfloor  (1 + \lfloor \alpha^{-1} \rfloor) (3 - \alpha - 2 \alpha \lfloor \alpha^{-1} \rfloor).
\end{equation}
%Formally:
%\begin{equation}\label{eq.Valpha}
%    V_{\alpha}^* = \begin{cases} 
%        \frac 1 {12} (k^{-2} - 1), \mbox{ if } \alpha^{-1} = k, k \in \mathbb{Z}
%        \\
%        \frac {\alpha - \frac 1 {k+1}} {\frac 1 k - \frac 1 {k+1}} V_k^*
%        +  \frac {\frac 1 k - \alpha} {\frac 1 k - \frac 1 {k+1}} V_{k+1}^*, \mbox{if } \frac 1 {k} \le \alpha < \frac 1 {k+1}$,  $k \in \mathbb{Z}.
%    \end{cases}
%\end{equation}
%
%We will prove a version of Local Limit Theorem for sums of our extremal random variables. 

In order to complete our proofs we need precise asymptotics of the concentration of sums of independent variables with distributions $\nuopt{\alpha_i}$. A standard method for this is the local limit theorem. However, the existing variants we are aware of are difficult to apply when some $\alpha_i^{-1}$ are close to an integer and some are not. Therefore we prove our own lemma using a central limit theorem (Berry--Esseen) and the properties of $\nuopt{\alpha_i}$.

Write $x = y \pm \epsilon$ as a shorthand for $x \in [y-\epsilon, y+\epsilon]$.
\begin{lemma} \label{lem.clt}
    Let $1 \ge \alpha_1 \ge \dots \ge \alpha_n > 0$.
    Let $Y_1, \dots, Y_n$ be independent random variables with distributions $\nu^*_{\alpha_1}, \dots, \nu^*_{\alpha_n}$ respectively. Let
    \[
        V_t^* := \E Y_1^2 + \dots + \E Y_t^2 = V_{\alpha_1}^* + \dots + V_{\alpha_t}^*; \quad V^* := V_n^* > 0.
    \]
    If there is $c \in (0,1)$ such that
    \begin{equation}\label{eq.cVstar}
        V_{\lceil n(1-c) \rceil}^* \ge \frac 1 2 V^*
    \end{equation}
    and $\delta' \in (0,1)$ such that
    \begin{equation}\label{eq.berryesseen}
        \E |Y_1|^3 + \dots + \E |Y_n|^3 \le \delta' (V^*)^{3/2}
    \end{equation}
    then assuming
    \begin{equation}\label{eq.clt_eps}
       % \epsilon' := 33 {(V_n^*)^{-1/4}} +  500 c^{-3} \delta_n' (V_n^*)^{1/4} < 1
       %\epsilon' := \frac {18} {K \delta' \sqrt{V^*}} + 13 K \delta' + \frac {1713} {c^3 K} < \frac 1 2
       %\epsilon' := 2 \times \sqrt{13 \left(\frac {12} {\sqrt{V^*}} + \frac {1713 \delta'} {c^3} \right)} + \frac {2 \sqrt {2 \pi}} {\sqrt{V^*}} \le \frac 1 2
        %\epsilon' := 2 \times \sqrt{13 \left( \frac 9 {\sqrt{V^*}} + 4841 \delta' c^{- \frac 3 2} \right)} + \frac {2 \sqrt {2 \pi}} {\sqrt{V^*}} \le \frac 1 2
        %\epsilon' := 2 \times \sqrt{8 \left( \frac 9 {\sqrt{V^*}} + 4856 \delta' c^{- \frac 3 2} \right)} + \frac {2 \sqrt {2 \pi}} {\sqrt{V^*}} \le \frac 1 2
        \epsilon' := 405 (\delta')^{\frac 1 2} c^{-\frac 3 4} \le \frac 1 2.
    \end{equation}
    we have
    \begin{align*}
        t(\alpha_1, \dots, \alpha_n) = \pr(Y_1 + \dots + Y_n \in \{0, \frac 1 2\}) = \frac {1 \pm \epsilon'} {\sqrt {2 \pi V^*}}.
    \end{align*}
\end{lemma}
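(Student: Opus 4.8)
\textbf{Proof proposal for Lemma~\ref{lem.clt}.}

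The plan is to run a Berry--Esseen argument on the normalized sum $S_n/\sqrt{V^*}$, where $S_n = Y_1 + \dots + Y_n$, and then convert the resulting CLT estimate into a local statement about $\pr(S_n \in \{0,\tfrac12\})$ by exploiting the lattice structure of the $Y_i$. First I would record the elementary facts about $\nuopt{\alpha_i}$: each $Y_i$ is symmetric about $0$, so $\E Y_i = 0$ and $S_n$ is symmetric; moreover each $Y_i$ takes values in the lattice $\tfrac12\mathbb{Z}$, and in fact takes values in $\mathbb{Z}$ when $\alpha_i^{-1}$ is odd (the $k$ and $k+1$ supporting progressions are then both integer-shifted), and in $\tfrac12 + \mathbb{Z}$... — more carefully, $\nuopt{\alpha}$ is supported on $\{1,\dots,k\} - \tfrac{k+1}{2}$ and $\{1,\dots,k+1\}-\tfrac{k+2}{2}$, which lie in $\mathbb{Z}$ or $\tfrac12+\mathbb{Z}$ according to the parity of $k$ and $k+1$. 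The upshot is that $S_n$ lives on $\tfrac12\mathbb{Z}$, and $\{0,\tfrac12\}$ is exactly one span of this lattice; this is why the target probability is $\approx \tfrac{1}{\sqrt{2\pi V^*}}$ (the lattice spacing $\tfrac12$ times two adjacent atoms times the Gaussian density $\tfrac{1}{\sqrt{2\pi V^*}}$ at the center).

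Next I would apply the Berry--Esseen theorem to $S_n$: the hypothesis \eqref{eq.berryesseen} gives $\sum \E|Y_i|^3 \le \delta' (V^*)^{3/2}$, so $\sup_x |\pr(S_n \le x) - \Phi(x/\sqrt{V^*})| \le C_{BE}\,\delta'$ for the absolute Berry--Esseen constant. To get a \emph{local} estimate I would write $\pr(S_n \in \{0,\tfrac12\}) = \pr(-\tfrac14 < S_n \le \tfrac34)$ — valid because $S_n \in \tfrac12\mathbb{Z}$ and the only lattice points in $(-\tfrac14,\tfrac34]$ are $0$ and $\tfrac12$ — hence it equals $\Phi(\tfrac{3}{4\sqrt{V^*}}) - \Phi(\tfrac{-1}{4\sqrt{V^*}}) \pm 2C_{BE}\delta'$. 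The main term is $\int_{-1/4}^{3/4} \tfrac{1}{\sqrt{2\pi V^*}} e^{-u^2/(2V^*)}\,du$, which I would compare to the ``ideal'' value $\tfrac{1}{\sqrt{2\pi V^*}}$ (length $1$ of the interval times the density at $0$): the difference comes from (a) the interval being asymmetric about $0$ and (b) the Gaussian density varying over an interval of length $1$. Both errors are $O(V^{*-1})$ relative to the main term, so they are controlled once $V^*$ is not too small; but I want to avoid assuming $V^* \ge 1$ directly, so instead I would note that condition \eqref{eq.cVstar} combined with \eqref{eq.berryesseen} forces a lower bound on $V^*$ — indeed each $\E|Y_i|^3 \ge (\E Y_i^2)^{3/2}$ fails the wrong way, but we do have $\E|Y_i|^3 \le (\max|Y_i|)\cdot \E Y_i^2$ and $\E Y_i^2 = V^*_{\alpha_i}$, so \eqref{eq.berryesseen} and the pigeonhole / \eqref{eq.cVstar} give that many summands have comparable variance, forcing $V^* \gtrsim c^{-\text{something}}$... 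This is the delicate bookkeeping step. Actually the cleaner route: \eqref{eq.berryesseen} directly implies $V^* \ge (\delta')^{-2}\cdot(\text{something})$? No — rather, since $\max_i \E Y_i^2 \le$ the whole sum, and $(\E Y_i^2)^{3/2} \le \E|Y_i|^3$, summing gives $\sum (\E Y_i^2)^{3/2} \le \delta'(V^*)^{3/2}$, and combined with $\sum \E Y_i^2 = V^*$ and convexity this forces $V^*$ large when $\delta'$ is small. I would extract the needed bound $V^* \gtrsim (\delta')^{-2/3}$ or similar from this, then feed it into the density-variation estimate.

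Finally I would assemble: the total relative error is at most $O(\delta') + O(1/V^*) + O(c^{-1})\cdot(\text{from } \eqref{eq.cVstar})$ — wait, \eqref{eq.cVstar} isn't actually needed for Berry--Esseen, it is there presumably to guarantee $V^*$ doesn't collapse or to handle the lattice-span argument uniformly; I would track exactly where it enters (most likely in ensuring the extremal distributions genuinely span $\tfrac12\mathbb{Z}$ with enough ``mass'' so that there is no hidden coarser sublattice, i.e. not all $\alpha_i^{-1}$ could be such that $S_n$ lands on a coarser lattice — if all $k_i$ are odd then $S_n \in \mathbb{Z}$ and $\pr(S_n \in\{0,\tfrac12\}) = \pr(S_n = 0) \approx \tfrac{1}{\sqrt{2\pi V^*}}$ still by the integer local CLT, spacing $1$, one atom — so the formula is the same either way, good, the lattice subtlety is benign). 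Collecting constants and using \eqref{eq.clt_eps} to absorb everything into $\epsilon' = 405(\delta')^{1/2}c^{-3/4}$ gives the claim. \textbf{The main obstacle} I anticipate is not the CLT itself but pinning down an honest lower bound on $V^*$ from the hypotheses (so that the local-to-integral and asymmetry errors are genuinely $o(1)$ relative to $\tfrac{1}{\sqrt{2\pi V^*}}$) and then chasing the explicit constant $405$ through the Berry--Esseen constant, the Gaussian density Lipschitz bound, and the contributions of \eqref{eq.cVstar}; a secondary subtlety is handling mixed parities of $\alpha_i^{-1}$ cleanly via the single reformulation $\pr(S_n\in\{0,\tfrac12\}) = \pr(-\tfrac14 < S_n \le \tfrac34)$, which works uniformly because $S_n$ always lies in $\tfrac12\mathbb{Z}$.
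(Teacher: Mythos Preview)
Your approach has a fundamental gap at the key step. You propose to write $\pr(S_n \in \{0,\tfrac12\}) = \pr(-\tfrac14 < S_n \le \tfrac34)$ and apply Berry--Esseen directly. This gives
\[
\pr(-\tfrac14 < S_n \le \tfrac34) = \Phi\Bigl(\tfrac{3}{4\sqrt{V^*}}\Bigr) - \Phi\Bigl(\tfrac{-1}{4\sqrt{V^*}}\Bigr) \pm 2C_{BE}\,\delta',
\]
with main term $\approx (2\pi V^*)^{-1/2}$ and Berry--Esseen error $\pm 2C_{BE}\delta'$. But from the structure of $\nuopt{\alpha}$ every nonzero value of $Y_i$ satisfies $|Y_i| \ge \tfrac12$, so $\E|Y_i|^3 \ge \tfrac12 \E Y_i^2$; summing and comparing with \eqref{eq.berryesseen} forces $\delta' \ge \tfrac{1}{2\sqrt{V^*}}$. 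Hence the Berry--Esseen error is at least $C_{BE}/\sqrt{V^*}$, which is of the \emph{same order} as the target $(2\pi V^*)^{-1/2}$: the relative error is $\Omega(1)$, not bounded by $\epsilon' \le \tfrac12$. Berry--Esseen on a unit-length interval is simply too coarse for a local statement, and no amount of bookkeeping with ``$V^*$ large'' rescues this, since $\delta'\sqrt{V^*} \ge \tfrac12$ always.

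The paper's remedy is to use the \emph{shape} of the distribution of $S_n$ to trade the unit interval for a mesoscopic one. It conditions on the random set $A = \{i: Y_i \in \tfrac12 + \mathbb{Z}\}$; conditionally, each $Y_i$ is uniform on a centred arithmetic progression, hence discrete log-concave, and by Keilson--Gerber the conditional sum $S_{A'}$ is log-concave and unimodal on either $\mathbb{Z}$ or $\tfrac12 + \mathbb{Z}$. Unimodality gives the lower bound by comparing $\pr(S_n \in \{0,\tfrac12\})$ with the \emph{average} mass over an interval of length $\tilde{\epsilon}\sqrt{V^*}$, where Berry--Esseen now has relative error $\sim \delta'/\tilde{\epsilon}$. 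The upper bound needs more: log-concavity yields the telescoping inequality $\pr(S_{A'}=x)\,\pr(S_{A'}=x+2\tau)\le\pr(S_{A'}=x+\tau)^2$ with $\tau \sim \tilde{\epsilon}\sqrt{V^*}$, which transfers control from mesoscopic intervals back to the single atom at $x$. Optimising $\tilde{\epsilon}$ balances the errors and produces the $(\delta')^{1/2}$ in $\epsilon'$. Condition \eqref{eq.cVstar} has nothing to do with lattice spans; it is used solely to guarantee that after conditioning on $A$ the variance $V^*_A$ and third moments remain comparable (up to factors of $c^{-1}$) to the unconditional ones, so that Berry--Esseen applies to $S_{A'}$ with controlled constants --- this is where the $c^{-3/4}$ in $\epsilon'$ originates.
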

Since $V_{\alpha}^*$ is convex, Jensen's inequality implies that for a fixed $\bar{\alpha}$ the main asymptotic term $\frac 1 {\sqrt{2 \pi V^*}}$
is maximized when $\alpha_1=\dots= \alpha_n = \bar{\alpha}$: %This can furthermore be upper bounded by a simple expression that depends only on $\bar{\alpha}$ and $n$:
\begin{remark}\label{rmk.explicit_bound}
    Let $Y_1, \dots, Y_n$, $V^*$ and $\bar{\alpha}$ be as in Lemma~\ref{lem.clt}. Then
%    Then for $\bar{\alpha}$ fixed
    %the main asymptotic term in Lemma~\ref{lem.clt} 
    \[
        \frac 1 {\sqrt {2 \pi V^*}} \le  \frac 1 {\sqrt {2 \pi n V_{\bar{\alpha}}}} \le \sqrt {\frac 6 {\pi n}}  \frac {\bar{\alpha}} {\sqrt{ 1 - \bar{\alpha}^2}}.
    \]
\end{remark}

\begin{proof} %{Remark~\ref{rmk.explicit_bound}}
    Recall that $V^*_\alpha$ is given in (\ref{eq.V_alpha}) and it is a linear interpolation of $f$, $f(\alpha) = {12}^{-1}(\alpha^{-2}-1)$.
    $V^*_\alpha$ is a convex function of $\alpha$ and $V^*_\alpha \ge f(\alpha)$ for $\alpha \in (0,1]$. The claim follows since by Jensen's inequality
    \[
        V^* = \sum_{i=1}^n V^*_{\alpha_i}  \ge n V^*_{\bar{\alpha}} \ge n f(\alpha).
    \]
    %So \m{too detail?}
    %\[
    %    \frac 1 {\sqrt {2 \pi V^*}} \le  \frac 1 {\sqrt {2 \pi} \sqrt{12^{-1}(\bar{\alpha}^{-2} - 1)n)}} = \sqrt {\frac 6 {\pi n} } \frac{\bar{\alpha}}{\sqrt{1- \bar{\alpha}^2}}.
    %\]
\end{proof}

The next theorem is the most general and most detail one in this paper. It combines all of the results above.

\begin{theorem} \label{thm.main}
    For each integer $d \ge 2$ and each norm $\|\cdot\|$ on $\mathbb{R}^d$ there is a constant $C = C(d,\|\cdot\|)$ such that the following holds.

    Let $X_1, \dots, X_n$ be independent random vectors in $(\mathbb{R}^d, \|\cdot\|)$; $n \ge 8$.
    Suppose $\concdiam(X_i, 1) \le \alpha_i$ where $1 \ge \alpha_1 \ge \dots \ge \alpha_n > 0$. 
    Let $Y_1, \dots, Y_n$ be independent random variables with the extremal distributions $\nu^*_{\alpha_1}, \dots, \nu^*_{\alpha_n}$ respectively.
    Write  
    \[
        \bar{\alpha} = {n}^{-1} \sum_{i=1}^{n} \alpha_i.
        %;  \quad \quad   \pi_t = \pr(Y_1 + \dots + Y_t \in \{0, \tfrac 1 2\}).
    \]
    Let $V^*_t, V^*$ be as in Lemma~\ref{lem.clt}. Suppose there is $c \in (0, \frac 1 3)$ such that
    \begin{equation}\label{eq.cVstar2}
        V^*_{\lceil(1-c) n\rceil} \ge \frac 3 4 V^*
    \end{equation}
    %(\ref{eq.cVstar}),
    and conditions (\ref{eq.berryesseen}) and (\ref{eq.clt_eps})
    hold for some positive $\delta'$ and $\epsilon'$ as defined in Lemma~\ref{lem.clt}.
    Suppose further that 
    %$\epsilon' \le \frac 1 {8 \sqrt 2}$,
    %$\epsilon' \le \frac 3 {32}$, 
    $\epsilon' \le \frac 3 {16}$, 
    \begin{equation}\label{eq.condition_near_one}
        %\bar{\alpha} n \le \gamma (V_n^*)^{3/2} \mbox{ for some }\gamma \le \frac 1 {281 C^2}
        \xi_d(\bar{\alpha}) \bar{\alpha}^2 n \le (V^*)^{3/2}\gamma \mbox{ for some }\gamma \le  (10 C)^{-2} 
    \end{equation}
    where $\xi_2(x)=x$ and $\xi_d(x)=1$ for $d>2$,
    and    
    \begin{equation}\label{eq.m_eps}
        m:= C \xi_d(\bar{\alpha})^{\frac 1 2} t(\alpha_1, \dots, \alpha_n)^{- \frac 1 2} n^{\frac 1 2} < \frac {c n } 5.
    \end{equation}
    Then
    \begin{align*}\label{eq.main_inequality}
        \concdiam(X_1 + \dots + X_n, 1) % &\le (1+\epsilon) \conc(Y_1 + \dots + Y_n, 1) 
          %\le \frac {1 + 11 \epsilon' + 4 m n^{-1} + C \sqrt{\gamma}} {\sqrt {2 \pi V^*_{n-\lfloor m \rfloor}}} + e^{-\frac{m^2} {9 n}}.
          \le \frac {1 + 6 \epsilon' + 4 m n^{-1} + C \sqrt{\gamma}} {\sqrt {2 \pi V^*_{n-\lfloor m \rfloor}}} + e^{-\frac{m^2} {9 n}}.
    \end{align*}
\end{theorem}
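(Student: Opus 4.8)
The plan is to combine the three ingredients established above — the Hal\'asz dichotomy (Lemma~\ref{lem.halasz_corollary2}), the extremal inequality for distributions close to a line (Theorem~\ref{close_to_line}), and the local limit estimate (Lemma~\ref{lem.clt}) — with a truncation step that passes from ``close to a line in probability'' to ``close to a line surely''. By equivalence of norms on $\mathbb{R}^d$ we may fix constants $c_0=c_0(d,\|\cdot\|)\in(0,\tfrac18)$ (or $\in(0,\tfrac{\sqrt3}4)$ when $\|\cdot\|$ is Euclidean) and $N_0=N_0(d,\|\cdot\|)$ such that any set of $\ell_2$-distance $\le c_0$ from a line lies within $\|\cdot\|$-distance $\tfrac18$ (resp.\ $\tfrac{\sqrt3}4$) of it, any set of $\ell_2$-diameter $\le2$ is covered by $N_0$ sets of $\|\cdot\|$-diameter $\le1$, and vice versa; consequently $\conc(X_i,2)\le N_0\alpha_i$ in the $\ell_2$ metric, and $\concdiam(\sum_i X_i,1)\le N_0\,\concdiam(\sum_i X_i,2)$ with the $\ell_2$ metric on the right. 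Put $\delta:=t(\alpha_1,\dots,\alpha_n)/(2N_0)$ and $\epsilon:=c_0\lfloor m\rfloor/(10n)$, and apply Lemma~\ref{lem.halasz_corollary2} (legitimate as $n\ge8$) with parameters $c_0,\delta,\epsilon$ and bounds $\min(N_0\alpha_i,1)$. In the first alternative $\concdiam(\sum_i X_i,1)\le N_0\delta=t(\alpha_1,\dots,\alpha_n)/2$, which, via Lemma~\ref{lem.clt} (applicable by (\ref{eq.cVstar2}), (\ref{eq.berryesseen}), (\ref{eq.clt_eps})), is dominated by the claimed bound. Otherwise there are a line $L$ through the origin and vectors $a_i$ such that the set $I$ of indices with $\pr(d_2(X_i-a_i,L)\ge c_0)\le\epsilon$ satisfies $|I|\ge n-B$, where $B$ is the explicit quantity of that lemma, which is $O_{d,\|\cdot\|}(\delta^{-1}\epsilon^{-1}\xi_d(\bar\alpha))$. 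Using $t(\alpha_1,\dots,\alpha_n)\asymp(V^*)^{-1/2}$, condition (\ref{eq.condition_near_one}), and the definition (\ref{eq.m_eps}) of $m$, a short calculation gives $B\le\lfloor m\rfloor/2$ provided $C=C(d,\|\cdot\|)$ is large enough (this is one of the places that forces the form of $m$).

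Now discard the bad indices: by monotonicity of $\concdiam(\cdot,1)$ under convolution with an independent summand, $\concdiam(\sum_{i=1}^n X_i,1)\le\concdiam(\sum_{i\in I}(X_i-a_i),1)$. For $i\in I$ let $E_i=\{d_2(X_i-a_i,L)<c_0\}$, $q_i=\pr(E_i^c)\le\epsilon$, let $\tilde\mu_i$ be the law of $X_i-a_i$ conditioned on $E_i$ — a probability measure supported within $\|\cdot\|$-distance $\tfrac18$ (resp.\ $\tfrac{\sqrt3}4$) of $L$ — and set $\tilde\alpha_i:=\min(1,\concdiam(\tilde\mu_i,1))\le\min(1,\alpha_i/(1-\epsilon))$. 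Condition on the random ``far set'' $F=\{i\in I:\ X_i-a_i\notin E_i\}$; it is stochastically dominated by a binomial with $\le n$ trials and success probability $\epsilon$, and since $n\epsilon=c_0\lfloor m\rfloor/10$ while $m<cn/5$, a Chernoff bound gives $\pr(|F|>\lfloor m\rfloor/2)\le e^{-m^2/(9n)}$. On the complementary event, conditionally on $F$ the vectors $X_i-a_i$, $i\in I\setminus F$, are independent with laws $\tilde\mu_i$, so for any open $B$ of diameter $\le1$, by part ii) of Theorem~\ref{close_to_line} on $(\mathbb{R}^d,\|\cdot\|)$,
\[
  \pr\Big(\textstyle\sum_{i\in I}(X_i-a_i)\in B\ \Big|\ F\Big)\ \le\ \concdiam\big(*_{i\in I\setminus F}\,\tilde\mu_i,\ 1\big)\ \le\ t\big((\tilde\alpha_i)_{i\in I\setminus F}\big).
\]
Taking the supremum over $B$ and averaging over $F$,
\[
  \concdiam\Big(\textstyle\sum_{i=1}^n X_i,1\Big)\ \le\ e^{-m^2/(9n)}\ +\ \max_{F:\,|F|\le\lfloor m\rfloor/2}\ t\big((\tilde\alpha_i)_{i\in I\setminus F}\big).
\]

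It remains to bound $t((\tilde\alpha_i)_{i\in I\setminus F})$, uniformly over $F$ with $|F|\le\lfloor m\rfloor/2$, by $(1+6\epsilon'+4mn^{-1}+C\sqrt\gamma)/\sqrt{2\pi V^*_{n-\lfloor m\rfloor}}$. Write $J=I\setminus F$, so $|J|\ge n-\lfloor m\rfloor$; by symmetry of $t$ in its arguments we may sort $\{\tilde\alpha_i\}_{i\in J}$. Since $\alpha_1\ge\dots\ge\alpha_n$ forces $V^*_{\alpha_1}\le\dots\le V^*_{\alpha_n}$, the omitted indices have largest variance, and by (\ref{eq.cVstar2}) (with $\lfloor m\rfloor<cn/5$) the $\lfloor m\rfloor$ largest variances total at most $V^*-V^*_{\lceil(1-c)n\rceil}\le\tfrac14V^*$; hence $\sum_{i\in J}V^*_{\alpha_i}\ge V^*_{n-\lfloor m\rfloor}\ge\tfrac34V^*$. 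As $V^*_\alpha$ is monotone in $\alpha$ and changes by only a factor $1+O(\epsilon)$ under $\alpha_i\mapsto\tilde\alpha_i\in[\alpha_i,\alpha_i/(1-\epsilon)]$ — the $O(n\bar\alpha)$ indices with $\alpha_i$ within $O(\epsilon)$ of $1$, each of variance $O(\epsilon)$, being controlled via (\ref{eq.condition_near_one}) — the hypotheses (\ref{eq.cVstar}) and (\ref{eq.berryesseen}) of Lemma~\ref{lem.clt} for $(\nuopt{\tilde\alpha_i})_{i\in J}$ follow from (\ref{eq.cVstar2}) and (\ref{eq.berryesseen}) for the full family, the slack $\tfrac34$ versus $\tfrac12$ in (\ref{eq.cVstar2}) absorbing the deletion of $\le\lfloor m\rfloor$ indices, while $\epsilon'\le\tfrac3{16}$ keeps the induced $\epsilon'$-parameter below $\tfrac12$. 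Lemma~\ref{lem.clt} then gives $t((\tilde\alpha_i)_{i\in J})=(1\pm O(\epsilon'))(2\pi\sum_{i\in J}V^*_{\tilde\alpha_i})^{-1/2}$, and combining $\sum_{i\in J}V^*_{\tilde\alpha_i}\ge(1-O(\epsilon))V^*_{n-\lfloor m\rfloor}$ with $\epsilon\asymp m/n$ and (\ref{eq.condition_near_one}) (which bounds the remaining slack by a multiple of $\sqrt\gamma$) yields the required estimate. Substituting into the last display completes the proof.

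\textbf{Main obstacle.} The real work is the bookkeeping of error terms, in two places. First, calibrating $c_0,\delta,\epsilon$ so that the number $B$ of bad indices \emph{and} the Chernoff tail are simultaneously $\le\lfloor m\rfloor/2$ with $m$ exactly as in (\ref{eq.m_eps}): the demand that $\epsilon$ be large enough to keep $B$ small and small enough to keep the binomial tail below $e^{-m^2/(9n)}$ pull in opposite directions, and their reconciliation dictates the form of $m$ (and why $\xi_d(\bar\alpha)$ appears). Second, verifying that restriction to $J$ and the inflation $\alpha_i\mapsto\tilde\alpha_i$ preserve the hypotheses of Lemma~\ref{lem.clt} while every loss stays inside the explicit budget $6\epsilon'+4mn^{-1}+C\sqrt\gamma$; the case of $\alpha_i$ near $1$ (where $V^*_{\alpha_i}$ is tiny but the $X_i$ are essentially unconstrained) is the most delicate and is exactly what condition (\ref{eq.condition_near_one}) is designed to handle. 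The diameter-$1$/diameter-$2$ and Euclidean/general-norm passages are routine given Lemma~\ref{lem.perfect_gen} and the equivalence of norms on $\mathbb{R}^d$.
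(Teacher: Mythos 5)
Your proposal follows essentially the same route as the paper's proof: apply the Hal\'asz corollary (Lemma~\ref{lem.halasz_corollary2}) to obtain either a small concentration or a line $L$ with $\le m/2$ badly-behaved indices, discard those, condition each remaining $X_i$ on the near-line event $E_i$ (with a Chernoff/Hoeffding bound on the number of additional $\lesssim m/2$ indices that fall outside), feed the conditioned laws into Theorem~\ref{close_to_line}, and finish with the local estimate Lemma~\ref{lem.clt} applied to the reduced extremal sequence. The constant calibration (your $N_0, c_0, \delta, \epsilon$ versus the paper's $\overline{C}, \underline{c}, c_{\mathcal{S}}, \delta, \epsilon$) differs in detail but serves the same purpose.

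The one step where you deviate, and where your sketch is shakier than the paper's, is the final passage to a sequence on which Lemma~\ref{lem.clt} is applied. The paper first replaces each conditional concentration by the deterministic bound $\alpha_i' = \min(1,(1+\tfrac{60}{59}\epsilon)\alpha_i)$ and then invokes the exact one-dimensional monotonicity from Ju\v{s}kevi\v{c}ius, namely $t(\alpha'_{i_1},\dots,\alpha'_{i_k})\le t(\alpha'_1,\dots,\alpha'_k)$, to collapse the maximum over all admissible index sets $J$ to the single fixed sequence $(\alpha'_1,\dots,\alpha'_{n-\lfloor m\rfloor})$; Lemma~\ref{lem.clt}'s hypotheses then need checking just once, for a deterministic list. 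You instead work directly with $\tilde\alpha_i=\min(1,\concdiam(\tilde\mu_i,1))$ and propose to verify the hypotheses uniformly over $J$ and over the realizations of $\tilde\alpha_i$, relying on ``$\tilde\alpha_i\in[\alpha_i,\alpha_i/(1-\epsilon)]$''. The upper end is correct, but the lower end $\tilde\alpha_i\ge\alpha_i$ is not: conditioning on $E_i$ can \emph{decrease} $\concdiam$ (e.g.\ a measure uniform on $\{0,5,5.5\}\subset\mathbb{R}$ has $\concdiam=\tfrac23$, but conditioned to $\{0,5\}$ it becomes $\tfrac12$). A smaller $\tilde\alpha_i$ inflates the variance and third moment, so the uniform verification of (\ref{eq.berryesseen}) and (\ref{eq.clt_eps}) for all realizations is not as automatic as your sketch suggests. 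Adopting the paper's replacement $\tilde\alpha_i\mapsto\alpha_i'$ together with the monotonicity of $t$ sidesteps this entirely and is the cleaner way to close the argument; everything else in your proposal, including the identification of where the real bookkeeping lies, matches the paper.
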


Let us prove Lemma~\ref{lem.clt} first.

\medskip

\begin{proofof}{Lemma~\ref{lem.clt}}
    Let $S = Y_1 + \dots + Y_n$.

    Write $k_i = \lfloor \alpha_i^{-1} \rfloor$. Then $p_i$ (corresponding to $p$ in (\ref{eq.nu_star}))
    is $p_i = k_i (\alpha_i (k_i+1) - 1)$.

    Let $A = \{i: Y_i \in \frac 1 2 + \mathbb{Z}\}$. For each fixed subset of indices $A'$ such that $\pr(A = A') > 0$ let $S_{A'}$ be distributed as $S$ given the event $A = A'$.
    Note that $S_{A'}$ has either support on integers (when $|A'|$ is even) or on half-integers (when $|A'|$ is odd). By \cite{keilsongerber} it is unimodal on its support and log-concave: $\pr(S_{A'} = x)^2 \ge \pr(S_{A'}=x-1) \pr(S_{A'}=x+1)$ for all $x$.

    Consequently, $S$ conditioned on the event that $|A|$ is odd (even) is symmetric, unimodal and distributed on half-integers (integers).
    So $S$ itself is symmetric and unimodal when restricted to half-integers or to integers. Therefore for $a \in \mathbb{Z} \cup (\frac 1 2 + \mathbb{Z})$ and $t \in \mathbb{Z}$, with $a \ge 0$, $t >0$
\[
    \pr(S \in \{a, a+\frac 1 2\}) \ge \frac {\pr(S \in [a, a+t-\frac 1 2])} {t} \ge \pr(S \in \{a+t-1, a+t-\frac 1 2\}).
\]
    We will next see that the above monotonicity argument together with a central limit theorem easily yields a useful lower bound. For the upper bound we will additionally need to condition on $A$, slightly more (but standard) calculations and, crucially, the log-concavity property of $S_{A'}$.

    Let us continue the proof. The assumption (\ref{eq.berryesseen}) allows us to apply the Berry--Esseen theorem. This theorem is known to hold with a constant smaller than 1, see, e.g., \cite{shevtsova}, that is, for any $z \in \mathbb{R}$ we have
\[
    |\pr(\frac{S}{\sqrt{V_{n}^*}} \le z) - \Phi(z)| \le \delta' 
\]
so
\[
    \pr(S \in (x \sqrt {V^*}, y \sqrt{V^*}]) = \Phi(y) - \Phi(x) \pm 2 \delta'.
\]
    Therefore for any $\tilde{\epsilon} > 0$
\begin{align*}
    &\pr(S \in \{0,\frac 1 2\}) \ge \frac {\pr(S \in [0, \tilde{\epsilon} \sqrt{V^*}])} {\tilde{\epsilon} \sqrt {V^*}+1}
    \\  & \ge \frac 1 {\tilde{\epsilon} \sqrt{V^*}+1} (\Phi(\tilde{\epsilon}) - \Phi(0) - 2 \delta').
\end{align*}
    By Taylor's theorem for any real $x$ and $h > 0$ we have  $\Phi(x+h) = \Phi(x) + h \Phi'(x) + \frac{\Phi''(\xi)} 2 h^2$ for
    some $\xi \in [x, x+h]$. Since $\Phi'(0)=\frac 1 {\sqrt {2 \pi}}$, $|\Phi''(x)| \le \frac 1 {\sqrt {2 e \pi}}$ for all $x \in \mathbb{R}$
    and $(1 + h)^{-1} \ge 1 - h$ for $h > 0$ and $(1-h_1) (1-h_2) \ge 1-h_1-h_2$ for any $h_1, h_2 \ge 0$
    %,
%$\Phi(\epsilon) - \Phi(0) \ge \frac 1 {\sqrt {2 \pi}} \epsilon (1 - \frac \epsilon {\sqrt e})$. Thus
    \begin{align}\label{eq.extremal_local_lower}
        &\pr(S \in \{0,\frac 1 2\}) \ge \frac 1 {\sqrt {V^*} + \tilde{\epsilon}^{-1}} \left(\frac 1 {\sqrt {2 \pi}}  - \frac {\tilde{\epsilon}} {2 \sqrt {2 e \pi}} - 2 \delta' \tilde{\epsilon}^{-1} \right) \nonumber
        \\ &
         \ge \frac 1 {\sqrt {2 \pi V^*}} \left(1  - \frac {\tilde{\epsilon}} {2 \sqrt {e}} - 2 \delta' \tilde{\epsilon}^{-1} {\sqrt {2 \pi}} - \frac 1 {\tilde{\epsilon} \sqrt{V^*}} \right )
        .
\end{align}

    Deferring the choice of $\tilde{\epsilon}$ to the end of the proof, we now turn to the upper bound.

    For any $\alpha \in (0,1]$ let $Y_{\alpha}$ be a random variable with distribution $\nuopt{\alpha}$ and recall that $V_{\alpha}^* = \E Y_{\alpha}^2$.
    Clearly if %$\alpha \in (0,1]$ and
    $k=\lfloor \alpha^{-1} \rfloor$ then
    ${V_{k^{-1}}^*} \le V_{\alpha}^* \le {V_{(k+1)^{-1}}^*}$.
    %, $V_{\alpha_i} \ge \frac 1 3 V_{\frac 1 {k_i+1}}^*$
    Furthermore, if $k \ge 2$ we have
    \begin{equation}\label{eq.bound_var}
        1 \le \frac {V_{(k+1)^{-1}}^*}{V_{k^{-1}}^*} = \frac { (k+1)^2 - 1} {k^2 - 1} \le \frac 8 3.
    \end{equation}
    It is also easy to see that $\E |Y_{k^{-1}}|^3 \le \E |Y_{\alpha}|^3 \le \E |Y_{(k+1)^{-1}}|^3$.
    We claim that for $k \ge 2$
    \begin{equation}\label{eq.bound_thirdm}
         \E |Y_{(k+1)^{-1}}|^3 \le 8 \E |Y_{k^{-1}}|^3.
    \end{equation}
    %Note that this implies $\E |Y_\alpha|^3 \le  8 \E |Y_{k^{-1}}|^3$. 
    To see this, note that if $k$ is even 
    %it is easy to find a coupling such that $|Y_{\frac 1 {k+1}}| \le |Y_{\frac 1 k}|+\frac 1 2$,
    then  $|Y_{k^{-1}}| + \frac 1 2$ stochastically dominates $|Y_{(k+1)^{-1}}|$ and $|Y_{k^{-1}}| \ge \frac 1 2$,
    so (\ref{eq.bound_thirdm}) follows. If $k$ is odd (so $k \ge 3$) we may obtain $\nuopt {(k+1)^{-1}}$ from $\nuopt {k^{-1}}$ by shifting the mass of each of the $k-1$ non-zero atoms $x$ of $\nuopt {k^{-1}}$ to the atoms $\{x-\frac 1 2, x+\frac 1 2\}$, so that the mass at each of the new atoms is $\frac {k-1} {k(k+1)}$
    %, inserting an atom with the same mass on 0 
    and multiplying the measure by $\frac k {k-1}$. This implies that $\frac {  \E |Y_{(k+1)^{-1}}|^3 } {  \E |Y_{k^{-1}}|^3  } \le \max_{x \ge 1} \frac {k {(x + \frac 1 2)^3}} {x^3(k-1)}
    \le \frac 3 2 (\frac 3 2)^3<8$. 

    Let $t$ be the largest index such that $k_1= \dots = k_t = 1$. 
    Define $V_{t,A}^* := \sum_{i=1}^t \E (Y_i^2 | A)$. 
    $4 V_{t, A}^*$ is equal to $|\{1, \dots, t\} \setminus A|$, which is a sum of $t$ independent Bernoulli random variables with parameters $1-p_1$, $\dots$, $1-p_t$ and 
    $\E V_{t,A}^* = V_{t}^* = \sum_{i=1}^t \E Y_i^2 = \frac 1 4 \sum_{i=1}^t (1-p_i)$. By well known concentration inequalities (e.g. Theorem~2.3 of \cite{cmcd98})
    \begin{equation}\label{eq.conc}
        \pr(|V_{t,A}^* - V_{t}^*|  > 0.5 V_{t}^*) \le 2 e^{-\frac 3 {28} V_{t}^*}.
    \end{equation}
    Also since $k_i=1$ for $i \in \{1, \dots, t\}$
    \begin{equation}\label{eq.bound_thirdmc}
       \E(\sum_{i=1}^t |Y_i|^3 | A) =  \frac 1 2 V_{t,A}^*.
    \end{equation}
    Let $\epsilon$ be small, but not too small:
    \begin{equation}\label{eq.epsilon_not_too_small}
        \epsilon > 2 e^{-\frac {3 V^*} {112}}.
    \end{equation}
    %We can assume $V^* \ge 83$.
    %By the assumption of the corollary $V_{n,i}^* \le 0.5 V^*$ when $i \le (1-c) n$.

    Write $V_{A}^* = \sum_{i=1}^n \E (Y_i^2 | A)$.
    Let $C$ be the event that 
    \begin{align}\label{eq.defC}
        \frac 3 {16} \le \frac {V_{A}^*} {V^*} \le \frac {8} {3 c} \quad\mbox{and}\quad
    \E (\sum_{i=1}^n |Y_i|^3 | A) \le \frac 8 c \E (\sum_{i=1}^n |Y_i|^3).
    \end{align}
    Here the constant $c$ is from (\ref{eq.cVstar}).
    We claim that
    \begin{equation}\label{eq.claim_variance_bound}
        \pr(\bar{C}) \le \frac {\epsilon} {\sqrt{V^*}}.
    \end{equation}
    First suppose that the right side of (\ref{eq.conc}) is at most $\frac {\epsilon} {\sqrt{V^*}}$.
    Then on the event $|V_{t,A}^* - V_{t}^*|  \le 0.5 V_{t}^*$ we have, using (\ref{eq.bound_var})
    \begin{align*}
        V_{A}^*  = V_{t,A}^* + \sum_{i=t+1}^n \E (Y_i^2 | A) \ge \min(\frac 1 2, \frac 3 8) V^* \ge \frac {3 V^*} 8,
    \end{align*}
    and similarly $V_{A}^* \le \frac 8 3 V^*$.

    Similarly bounding the contribution of $i \le t$ using (\ref{eq.bound_thirdmc}) and the contribution of $i > t$ using (\ref{eq.bound_thirdm}), we get that on the event $|V_{t,A}^* - V_{t}^*|  \le 0.5 V_{t}^*$ we have  $\sum_{i=1}^n \E (|Y_i|^3 |A) \le 8 \sum_{i=1}^n \E |Y_i|^3 $, thus this event implies $C$ and (\ref{eq.claim_variance_bound}) follows.

    Now suppose the opposite 
    \[
        2 e^{-\frac 3 {28} V_{t}^*} >  \frac {\epsilon} {\sqrt{V^*}}.
    \]
    We will show that $C$ always holds.
    The last inequality is equivalent to
    \begin{align*}
        & -\frac {3 V_{t}^*} {28} > \ln \frac \epsilon 2 - \frac 1 2 \ln V^*; % \nonumber
        \\ & V_{t}^* < \frac {28} 3 \ln \frac 2 \epsilon + \frac {14} 3 \ln V^*. % \label{eq.Vstareps1}
    \end{align*}
    We can rewrite (\ref{eq.epsilon_not_too_small}) as
    %\begin{align*}
        %& \frac 2 \epsilon < e^{\frac {3 V^*} {112}};
        %\\ 
        %&  \ln \frac 2 \epsilon < \frac {3 V^*} {112};
        %\\ 
        %& 
        $\frac {28} 3 \ln \frac 2 \epsilon < \frac {V^*} 4$. 
        We can assume $V^* \ge 83$, see the end of the proof. Thus we have $\frac {14} 3 \ln V^*  \le \frac {V^*} 4$, so
        \begin{equation*} %\label{eq.Vstareps2}
            \frac {28} 3 \ln \frac 2 \epsilon  + \frac {14} 3 \ln V^* < \frac {V^*} 2.
        \end{equation*} 
    %\end{align*}
        Combining the previous two inequalities we get $V_{t}^* < \frac {V^*} 2$.
        Therefore by the assumption (\ref{eq.cVstar}) of the lemma $t \le \lceil (1-c) n \rceil - 1 \le (1-c) n$.  
        Also by (\ref{eq.bound_var})
    \[
        V_{A}^* \ge \sum_{i=t+1}^n \E (Y_i^2 | A) \ge \frac 3 8 \sum_{i=t+1}^n \E Y_i^2 = \frac 3 8 (V^* - V_{t}^*) \ge \frac 3 {16} V^*.
    \]
    As $V_{\lceil \alpha^{-1} \rceil^{-1}}^* \le V_{\lfloor \beta^{-1} \rfloor^{-1}}^*$
    for $1 \ge \alpha \ge 0.5 \ge \beta > 0$, we get
    %by considering cases $V_{t,A}^*=0$ and $V_{t,A} > 0$ separately:
    \begin{align*}
        &V_{t,A}^* \le (1-c) V_{A}^*;\\
        &V_{t,A}^* \le \frac {1-c} c (V_{A}^* - V_{t,A}^*).
    \end{align*}
    So using (\ref{eq.bound_var})
    \begin{align*}
        &V_{A}^* \le \frac 8 3 V^* (1 + \frac {1-c} c) = \frac {8 V^*} {3c}.
    \end{align*}
    Similarly using (\ref{eq.bound_thirdmc}) and  (\ref{eq.bound_thirdm}) we get
    \[
        \E(|Y_1|^3 + \dots + |Y_n|^3 | A) \le \frac 8 c \E(|Y_1|^3 + \dots + |Y_n|^3),
    \]
    which completes the proof of (\ref{eq.claim_variance_bound}).

    Now fix a set $A' \subseteq \{1, \dots, n\}$ such that the event $A=A'$ implies $C$.
    Let $V_{A'}^* = Var(S_{A'})$. %be a random variable distributed as $V_{A}^*$ conditioned on $A=A'$.
    By (\ref{eq.defC}) and (\ref{eq.berryesseen})
    \begin{align*}
        &\mathbb{I}_C\E(|Y_1|^3 + \dots + |Y_n|^3 | A)
        %\le
        %\\ & 
        \le \delta' c' \mathbb{I}_C (V_{A}^*)^{3/2},
    \end{align*}
    $c'  = \frac 8 c (\frac {16} 3)^{3/2}$.
    Thus similarly as above by the Berry--Esseen theorem for $z\in \mathbb{R}$
    \[
       \left |\pr\left(\frac{S_{A'}}{\sqrt{V_{A'}^*}} \le z\right) - \Phi(z) \right| \le \delta'c'. 
    \]
    Set $x = \frac 1 2$ if $|A'|$ is odd and $x=0$ otherwise.
    Fix small $\epsilon_1$ and $\epsilon_2$ such that $0 \le \epsilon_1 < \epsilon_2 < 1$
    but 
    \begin{equation}\label{eq.eps2eps1}
        {(\epsilon_2 - \epsilon_1) \sqrt{V^*}} \ge 2. %\quad \mbox{and} \quad \frac {r_{A'} (\epsilon_2 - \epsilon_1)} {2 \sqrt e} + \frac {2\delta'c'} {r_{A'}(\epsilon_2 - \epsilon_1)} < 1. 
    \end{equation}
    Define $I_{\epsilon_1, \epsilon_2} = (x + \mathbb{Z}) \cap (\epsilon_1 \sqrt{V^*}, \epsilon_2 \sqrt{V^*}]$.
    %Let $r_{A'} = \sqrt{\frac {V^*}  {V_{A'}^*} }$.
    Let $r_{A'} = \frac {V_{A'}^*} {V^*}$.
    %The Berry-Esseen inequality gives
    We have
    \[
        \pr(S_{A'} \in I_{\epsilon_1, \epsilon_2}) = \Phi(r_{A'}^{-\frac 1 2} \epsilon_2) - \Phi(r_{A'}^{-\frac 1 2} \epsilon_1) \pm 2 \delta'c'
    \]
    Let $m=\min I_{\epsilon_1, \epsilon_2}$
    Using a similar argument as above, since $S_A'$ is unimodal on $x + \mathbb{Z}$, and $-(2e \pi)^{-1/2} \le \Phi^{(2)}(z) \le 0$ for $z \ge 0$,
    using Taylor's theorem twice
    there is $\xi \in [0, r_{A'}^{-\frac 1 2}\epsilon_2]$ such that
    \begin{align*}
        &\pr(S_{A'} = m) \ge \frac {\Phi(r_{A'}^{-\frac 1 2} \epsilon_2) - \Phi( r_{A'}^{-\frac 1 2} \epsilon_1) - 2 \delta'c'} {|I_{\epsilon_1, \epsilon_2}|} \nonumber
        \\ & \ge \frac {\Phi( r_{A'}^{-\frac 1 2} \epsilon_2) - \Phi( r_{A'}^{-\frac 1 2}  \epsilon_1) - 2 \delta'c'} {(\epsilon_2 - \epsilon_1) \sqrt{V^*} + 1} \nonumber
        \\ & \ge  \frac {\Phi'(0)r_{A'}^{-\frac 1 2}(\epsilon_2 - \epsilon_1) + 2^{-1}\Phi^{(2)}(\xi) r_{A'}^{-1} (\epsilon_2^2 - \epsilon_1^2) - 2 \delta'c'} {(\epsilon_2 - \epsilon_1) \sqrt{V^*} + 1} \nonumber
        \\ & \ge \frac 1 {\sqrt {2 \pi V^*}} (r_{A'}^{-\frac 1 2} - \frac{r_{A'}^{-1} (\epsilon_2^2 - \epsilon_1^2)} {2 \sqrt e (\epsilon_2-\epsilon_1)}  - \frac {2 \sqrt{2 \pi} \delta'c'} {\epsilon_2 - \epsilon_1}) (1 - \frac 1 {(\epsilon_2 - \epsilon_1) \sqrt{V^*}}) \nonumber
    \end{align*}
        \begin{align} 
            \pr(S_{A'} = m) \ge \frac {1} {\sqrt {2 \pi r_{A'} V^*}} (1 - \frac {\epsilon_1 + \epsilon_2} {2 \sqrt {e r_{A'}}} - \frac {2 \sqrt{2 \pi r_{A'}} \delta'c'} {(\epsilon_2 - \epsilon_1)} - \frac 1 {(\epsilon_2 - \epsilon_1) \sqrt{V^*}}) 
        . \label{eq.clt_bound1}
    \end{align}
    Similarly for $M=\max I_{\epsilon_1, \epsilon_2}$, since  
    \begin{equation}\label{eq.z1z2}
        \frac {1+z_1} {1-z_2} \le 1 + 2 z_1 + 2z_2 \quad\mbox{for }  z_2 \in (0, \frac 1 2) \mbox{ and }z_1 > 0,
    \end{equation}
    using (\ref{eq.eps2eps1}) we get 
    \begin{align} \label{eq.clt_bound2}
        &\pr(S_{A'} = M) \le \frac {1} {\sqrt { 2 \pi r_{A'} V^*}} (1  %- \frac {\epsilon_1+\epsilon_2} {\sqrt {e r_{A'}}} 
        + \frac {4 \sqrt {2 \pi r_{A'}} \delta'c'}{(\epsilon_2 - \epsilon_1)}  + \frac 2 {(\epsilon_2 - \epsilon_1) \sqrt{V^*}}).
    \end{align}
    Fix $\tilde{\epsilon} >  2 (V^*)^{-1/2}$.
    Denote $\tau=\lfloor \tilde{\epsilon} \sqrt{V^*} \rfloor$.
    %Let $a = x + \tau$.
    %Let $b = x + 2\tau$.
    Applying (\ref{eq.clt_bound1}) with $\epsilon_1 = 2\tilde{\epsilon}$ and $\epsilon_2= 3\tilde{\epsilon}$, then using (\ref{eq.defC}) we get %\m{$9 \sqrt {\frac 2 {3ce}}$ or $c \le \frac 1 2$}
    \begin{align*}
        &
        \pr(S_{A'} = x + 2 \tau) 
        %\\ &
        %\ge \frac {r_{A'}} {\sqrt {2 \pi V^*}} (1 - \frac {r_{A'}\tilde{\epsilon}} {2 \sqrt e} - 2\delta'c' (\tilde{\epsilon}r_{A'})^{-1} - (\tilde{\epsilon} \sqrt{V^*})^{-1})
        % \\ &
        \ge  \frac {1} {\sqrt {2 \pi r_{A'} V^*}} (1 - h_1); 
        \\ & 
        h_1 =   \frac {10\tilde{\epsilon}} {\sqrt{3 e}} + 8 \sqrt{\frac \pi {3c} } \delta'c' \tilde{\epsilon}^{-1} + (\tilde{\epsilon} \sqrt{V^*})^{-1}.
%wrong        h_1 =   9\tilde{\epsilon} {\sqrt{\frac 2 {3 c e}}} + 8 \sqrt{\frac {2\pi} 3 } \delta'c' \tilde{\epsilon}^{-1} + (\tilde{\epsilon} \sqrt{V^*})^{-1}.
    \end{align*}
%\m{$8 \sqrt{\frac {2\pi} 3}$ or $c \le \frac 1 2$}

    Similarly applying (\ref{eq.clt_bound2}) with $\epsilon_1 = 0$ and $\epsilon_2=\tilde{\epsilon}$, then using (\ref{eq.defC}) we get %\m{$16 \sqrt{\frac {2\pi} 3}$ or $c\le \frac 1 2$}
    \begin{align*}
        &
        \pr(S_{A'} = x + \tau)  %??? \le \sup_z \pr(S_{A'} = z) = \pr(S_{A'} = x) 
       % \\ &
       % \le \frac {r_{A'}} {\sqrt { 2 \pi V^*}} (1 +  4\delta'c' (\tilde{\epsilon} r_{A'})^{-1}  + 2 (\tilde{\epsilon} \sqrt{V^*})^{-1})
      %  \\ &
         \le \frac {1} {\sqrt { 2 \pi r_{A'} V^*}} (1 + h_2);
        \\ &
        %epsilon_1 = 0!!!
        h_2  = 16 \sqrt{\frac \pi {3c} }\delta'c' \tilde{\epsilon}^{-1}  + 2 (\tilde{\epsilon}\sqrt{V^*})^{-1}.
        %h_2  = 16 \sqrt{\frac {2\pi} {3} }\delta'c' \tilde{\epsilon}^{-1}  + 2 (\tilde{\epsilon}\sqrt{V^*})^{-1}.
    \end{align*}
Now since $S_{A'}$ is unimodal and log-concave a `telescoping' argument implies that
% Write $p_z = \pr(S_{A'}=z)$. Let us first prove that p_{a+1} p_{a+k} \ge p_a p_{a+k+1} for .
%
% Base case $k = 1$: definition
% suppose this holds for k

% Let us use the definition
% p_{a+k+1}^2 \ge p_{a+k} p_{a+k+2}
% if p_{a+k+1} > 0 then
% p_{a+k+1} \ge p_{a+k} p_{a+k+2} p_{a+k+1}^{-1}

% using induction p_{a+1} p_{a+k} \ge p_a p_{a+k+1} \ge p_a p_{a+k} p_{a+k+2} / p_{a+k+1}
% if p_{a+k+1} also not equal 0, then
% p_{a+1} p_{a+k+1} \ge p_a p_{a+k+2} as required.

% What if p_{a+k} or p_{a+k+1} is equal to 0?
% The distribution is unimodal, so either p_{a+1} or p_{a+k+2} is equal to 0
% so the inequality is trivially correct

% applying this with k=b-a-1 we get:
% p_{a+1} p_{b-1} \ge p_a p_b.

% Thus  p(x+\tau)^2 \ge  p_{x + tau - 1} p_{x + tau + 1} \ge  \dots  \ge  p_{x+1} p_{2 tau + x - 1} \ge p_x p_{2 tau}
    \begin{equation}\label{eq.logconct}
        \pr(S_{A'} = x) \pr(S_{A'} = x + 2 \tau) \le \pr(S_{A'} = x + \tau)^2.
    \end{equation}
    So
    \[
        \pr(S_{A'} = x) (1-h_1) \le \pr(S_{A'} = x + \tau) (1+h_2).
    \]
    For all $A'$ (even or odd size)
    this can be written as
    \[
        \pr(S_{A'} \in \{0, \frac 1 2\}) (1-h_1) \le \pr(S_{A'} \in \{\tau, \tau + \frac 1 2\}) (1 + h_2).
    \]
    Averaging over all $A'$ such that $C$ holds on $A=A'$ this becomes
    \[
        \pr(S \in \{0, \frac 1 2\} \cap C) (1-h_1) \le \pr(S \in \{\tau, \tau + \frac 1 2\} \cap C) (1 + h_2).
    \]
    %For $h_1, h_2 \ge 0$, $h_1 \le \frac 1 2$ we have $(1+h_2) (1-h_1)^{-1} \le 1 + 2 h_1 + 2 h_2$.
    Assume %$\pr(\bar{C}) \le \frac 1 2$.
\begin{equation}\label{eq.epsilons_assumption}
   % \frac {\tilde{\epsilon}} {2 \sqrt {e}} + \frac {2 \delta'} {\sqrt {2 \pi}} + \frac 2 {\tilde{\epsilon} \sqrt{V^*}} < \frac 1 2
\pr(\bar{C}) + h_1 + h_2 < \frac 1 4.
\end{equation}
    For any event $A$, $\pr(A) \le \pr(A \cap C) + \pr(\bar{C})$ %$\pr(A) - \pr(\bar{C}) \le \pr(A | C) \le \pr(A) (1 + 2 \pr(\bar{C}))$, 
    so using (\ref{eq.z1z2}) and (\ref{eq.epsilons_assumption}) 
%    Therefore assuming $2 \pr(\bar{C}) \le 1$
    \begin{align}\label{eq.S0tau}
        &  \pr(S \in \{0, \frac 1 2\}) \le  \pr(S \in \{\tau, \tau + \frac 1 2\}) ( 1 + 2h_1 + 2h_2) + \pr(\bar{C}).
        %\\ &
    \end{align}
    By a simple calculation:
    \[
        2h_1 + 2 h_2 =\frac {20} {\sqrt {3 e} } \tilde{\epsilon} + 48 \sqrt{\frac {\pi} {3 c}} \frac 8 c 
        %\left(\frac 8 {3c}\right)^{\frac 3 2} 
        \left(\frac {16} {3}\right)^{\frac 3 2} 
        \frac {\delta'} {\tilde{\epsilon}} + \frac 6 {\tilde{\epsilon} \sqrt{V^*}} \le 8 \tilde{\epsilon} + 
        \frac { 4840
        %1712 
        \delta'} { c^{\frac 3 2} \tilde{\epsilon} } + \frac {6} {\tilde{\epsilon} \sqrt{V^*}}.
        %wrong 2h_1 + 2 h_2 = 18 \tilde{\epsilon} {\sqrt \frac 2 {3 c e} } + 48 \sqrt{\frac {2 \pi} 3} \frac 8 c \left(\frac 8 {3c}\right)^{\frac 3 2} \frac {\delta'} {\tilde{\epsilon}} + \frac 6 {\tilde{\epsilon} \sqrt{V^*}} \le
        %wrong 9 \tilde{\epsilon} c^{-\frac 1 2} + 2420 c^{- \frac 3 2} \delta' \tilde{\epsilon}^{-1} + \frac {6} {\tilde{\epsilon} \sqrt{V^*}}.
    \]
    Similarly as in (\ref{eq.extremal_local_lower}) and (\ref{eq.clt_bound2}) we get (since $\tilde{\epsilon} \sqrt{V^*} > 2$)
    \begin{equation}\label{eq.Stau}
         \pr(S \in \{\tau, \tau+\frac 1 2\}) \le
        \frac 1 {\sqrt {2 \pi V^*}} \left(1  + {4 \delta'} {\tilde{\epsilon}}^{-1} \sqrt {2 \pi}  + \frac 2 {\tilde{\epsilon} \sqrt{V^*}} \right )
    \end{equation}
    Combining (\ref{eq.S0tau}) and (\ref{eq.Stau}) and using (\ref{eq.extremal_local_lower}), (\ref{eq.claim_variance_bound}) and (\ref{eq.epsilons_assumption}) yields
    \begin{align*}
        & \pr(S \in \{0, \frac 1 2\}) 
        \\ &
         \le \frac 1 {\sqrt {2 \pi V^*}} \left(1  + {4 \delta'} {\tilde{\epsilon}}^{-1} \sqrt {2 \pi} + \frac 2 {\tilde{\epsilon} \sqrt{V^*}} \right )
         ( 1 +  2h_1 + 2h_2) + \pr(\bar{C})
        \\ &
\le
         \frac 1 {\sqrt {2 \pi V^*}} (1 +  2h_1 + 2h_2 +   {6 \delta'} {\tilde{\epsilon}}^{-1} \sqrt {2 \pi} + \frac 3 {\tilde{\epsilon} \sqrt{V^*}} ) + \frac {\epsilon} {\sqrt{V^*}}.
        \\ &
\le
        \frac 1 {\sqrt {2 \pi V^*}} (1 + h);
        \\ &
        h=   
        8 \tilde{\epsilon} +
        \frac { 4840 \delta'} { c^{\frac 3 2} \tilde{\epsilon} } 
        + \frac {9} {\tilde{\epsilon} \sqrt{V^*}}
        +   {6 \delta'} {\tilde{\epsilon}}^{-1} \sqrt {2 \pi} +  \sqrt{2 \pi} \epsilon. 
    \end{align*}
    We have
    \[
        h \le \frac 1 {\sqrt{V^*}} \frac {9} {\tilde \epsilon} + 8 \tilde{\epsilon} + \frac {4856 \delta'} {c^{\frac 3 2} \tilde{\epsilon}}  + \sqrt{2 \pi} \epsilon.
    \]
    %Let us put \m{is it the best choice?} $\tilde{\epsilon} = \frac {2 \sqrt 2} {(V^*)^{1/4}}$ and $\epsilon = \frac 2 {(V^*)^{1/4}}$. Since $V^* \ge 83 > 33$, (\ref{eq.epsilon_not_too_small}) is satisifed and
    %\begin{align*}
    %    & h \le \frac {16 \sqrt 2 + 4 \sqrt{2 \pi}} {(V^*)^{1/4}} +  \frac{1401 \delta' (V^*)^{1/4}} {c^3 2 \sqrt 2}
    %    \le \epsilon'.
    %\end{align*}
%    Let us put $\tilde{\epsilon} = K \delta'$ and $\epsilon = \frac 2 {\sqrt{V^*}}$. Since $V^* \ge 83$, (\ref{eq.epsilon_not_too_small}) is satisifed and
%    \begin{align*}
%        & h \le \frac {18} {K \delta'\sqrt{V^*}} + 13 K \delta' + \frac {1713} {c^3 K} =  \epsilon'.
%    \end{align*}
    Put $\epsilon = \frac 2 {\sqrt{V^*}}$. Since $V^* \ge 83$, (\ref{eq.epsilon_not_too_small}) is satisfied. The minimum $x_0$ of $f(x) = \frac a x + b x$ with $a,b>0$ is $x_0 = \sqrt \frac a b$ with $f(x_0)=2\sqrt {a b}$. Hence if we put $\tilde{\epsilon} = {8}^{-1/2}(\frac {9} {\sqrt{V^*}} + \frac {4856 \delta'} {c^{3/2}})^{1/2}$, we get
\begin{align}
    h & \le 2 \times \sqrt{8 \left( \frac 9 {\sqrt{V^*}} + 4856 \delta' c^{- \frac 3 2} \right)} + \frac {2 \sqrt {2 \pi}} {\sqrt{V^*}} \nonumber
    \\ & \le 2 \times \sqrt{8 \left( 18 \delta' + 4856 \delta' c^{- \frac 3 2} \right)} + 4 \sqrt {2 \pi} \delta' \nonumber
    \\ & \le 2 \times \sqrt{8 \cdot 4874 \delta' c^{- \frac 3 2}} + {4 \sqrt {2 \pi}} (\delta')^{\frac 1 2} \nonumber
    \\ & \le 405 (\delta')^{\frac 1 2} c^{-\frac 3 4} = \epsilon'. \label{eq.h_epsilon}
\end{align}
    Here in the second line we used 
    \begin{equation}
        \delta' \ge \frac 1 {2 \sqrt{V^*}}, \label{eq.deltaV}
    \end{equation}
    which follows since by (\ref{eq.berryesseen}) and the definition of $\nuopt \alpha$
    \[
        \frac 1 2 V^* = \frac 1 2 \sum_{i=1}^n \E Y_i^2 \le \sum_{i=1}^n \E |Y_i|^3 \le \delta' (V^*)^{3/2}. 
    \]
    % then $h \le \epsilon'$, see (\ref{eq.clt_eps}). 
    From (\ref{eq.extremal_local_lower}) $\pr(S \in \{0, \frac 1 2\}) \ge \frac 1 {\sqrt {2 \pi V^*}} (1 - h)$. 
    Finally note that all of our used assumptions, i.e. (\ref{eq.epsilons_assumption}), $V^* \ge 83$, $\tilde{\epsilon} >  2 (V^*)^{-1/2}$ and $\pr(\bar{C}) \le \frac 1 2$ follow from (\ref{eq.clt_eps}), (\ref{eq.h_epsilon}) and (\ref{eq.deltaV}). 
\end{proofof}

\medskip

\begin{proofof}{Theorem~\ref{thm.main}}
    Let $\underbar{$c$}$, $\overline{c}$ be positive numbers, guaranteed by the norm equivalence for $\mathbb{R}^d$, such that $\underbar{$c$} \|z\| \le \|z\|_2 \le \overline{c} \|z\|$ for all $z \in \mathbb{R}^d$. Set $c_{\mathcal{S}} = \frac {\sqrt 3} 4$ if $\mathcal{S}=(\mathbb{R}^d, \|\cdot\|)$ is Hilbert and $c_{\mathcal{S}} = \frac 1 8$ otherwise. Let $\overline{C}$ be the minimum number of open sets of $l_2$-diameter one that can cover any open set of $l_2$-diameter $\overline{c}$. As such a set is contained in an open $l_2$-ball of diameter $2 \overline{c}$, the number $\overline{C}$ is finite (although exponential in $d$).
    For the Euclidean space we have $\underbar{$c$} = \overline{c} = \overline{C} = 1$ and $c_{\mathcal{S}}=\frac {\sqrt 3} 4$.
    
    Let $\concdiam_2$ be the concentration w.r.t. $\|\cdot\|_2$.
    Then $\concdiam_2(X_1 + \dots + X_n, 1) \ge \overline{C}^{-1} \concdiam(X_1 + \dots + X_n, 1)$.

    Let $c_1 = 9 c_H (d-1)^3$ be the constant from Lemma~\ref{lem.halasz_corollary2}.
    We will prove the theorem with
    \begin{equation}\label{eq.defbigC}
        C = \sqrt{2c_1 \overline{C}} \underbar{$c$}^{-1}  c_{\mathcal S}^{-1}.
    \end{equation}
%   c_1 from Halasz is not explicit so commented out this.
%   As noted in the proof of Lemma~\ref{lem.halasz_corollary_diam} we can take $c_1' = 5^d c_1$ where $c_1$ is the constant from (\ref{eq.halasz_remark}). 
%   so $C= \frac {16 \sqrt {6 5^d c_1} } {3}$)
    %Let $A$ be a set of diameter (w.r.t. $\|\cdot\|)$ at most one. Then its diameter diameter w.r.t. $\|\cdot\|_2$ is at most $\overline{c}$, so 

    Recall that $d(x,y) = \|x-y\|$ and $d_2(x,y) = \|x-y\|_2$.
    Rescale the space by multiplying by 2, apply Lemma~\ref{lem.halasz_corollary2} with %$\epsilon \in (0, \frac 1 {48})$, 
    $c = 2 \underbar{$c$} c_{\mathcal{S}} = 2 \sqrt{2 c_1 \overline{C}} C^{-1}$, $\delta =  \overline{C}^{-1} t(\alpha_1, \dots, \alpha_n)$ and
    \[
        \epsilon = \frac {m} {4 n} = \frac C 4 \xi_d(\bar{\alpha})^{1/2} t(\alpha_1, \dots, \alpha_n)^{-1/2} n^{-1/2},
    \]
    then rescale back by dividing by 2. We get that either $\concdiam(X_1 + \dots + X_n, 1) < t(\alpha_1, \dots, \alpha_n)$ or there is a line $L$ and vectors $a_1, \dots, a_n$ in $\mathbb{R}^d$ such that for all but at most $b$, 
    \[
        b := \frac {m} 2 = %\frac {c_1'} {4 \underbar{$c$}^{2} c_{\mathcal S}^{2} \bar{\alpha} t(\alpha_1, \dots, \alpha_n)}
         \frac {C} {2} \xi_d(\bar{\alpha})^{1/2} t(\alpha_1, \dots, \alpha_n)^{-1/2} n^{1/2},
    \]
    indices in $i \in \{1,\dots, n\}$ we have $\pr(d(X_i - a_i, L) \ge c_{\mathcal{S}}) \le \pr(d_2(X_i - a_i, L) \ge \underbar{$c$} c_{\mathcal{S}}) \le \epsilon$. In the former case we are done by Lemma~\ref{lem.clt}, so let us assume the latter one. We may assume that $a_i=0$ for all $i$.
    Let $I = \{i \in \{1, \dots, n\}: \pr(d(X_i, L) \ge c_{\mathcal{S}}) \le \epsilon\}$, so $|I| \ge n - \lfloor b \rfloor$. 
    %For $i \in I$ %we can
    %decompose
    %\[
    %    X_i = (1-\mathbb{I}_i) X_i' + \mathbb{I}_i X_i'' 
    %\]
    %where $\pr(\|d(X_i', L)\|> c_{\mathcal{S}}) = 0$ %with $\concdiam(X_i', 1) \le \frac {\alpha_i} {1-\epsilon}$,
    Let $\mathbb{I}_i$ be the indicator of the event $d(X_i, L) \ge c_{\mathcal{S}}$.
    %Then $\mathbb{I}_i$ 
    It is a Bernoulli random variable
    with parameter $p_i$ and if $i \in I$ then $p_i \le \epsilon$.
    Note that (\ref{eq.m_eps}) and $c < \frac 1 3$ implies $\epsilon \le \frac 1 {60}$. 

    Define a random variable $B=\sum_{i \in I} \mathbb{I}_i$.
    We have
    \begin{align*}
        & \pr(B \ge 2 \epsilon n) 
        %\\ &
        \le \pr(B - \E B \ge \epsilon n)
        %\\ &
        \le \pr\left(B - \E B \ge \epsilon |I|\right)
       % \\ &
        \le e^{-2 \epsilon^2 |I| }
       % \\ &
        \le e^{- \frac {29 } {15} \epsilon^2 n}.
    \end{align*}
    Here
    the fourth line is a Hoeffding's concentration inequality, see, e.g. \cite{cmcd98} and
    the last line follows since by (\ref{eq.m_eps}) $|I| \ge n -  \frac {c n} {2 \cdot 5} \ge \frac {29 n} {30}$.

    For $i\in\{1,\dots,n\}$ define $\alpha_i' = \min(1,(1+\frac {60} {59} \epsilon) \alpha_i)$.
    For $i \in I$ let $X_i'$ be distributed as $X_i$ conditioned on $\mathbb{I}_i = 0$.
    Since $\epsilon \le \frac 1 {60}$ we have
    \[
        \concdiam(X_i', 1) \le  \min \left (1, \frac {\alpha_i} {1-\epsilon} \right) = \min \left (1, \alpha_i \left(1 + \frac{\epsilon} {1- \epsilon}\right) \right) \le \alpha_i'.
    \]
    Let $Y_1', \dots, Y_n'$ be independent random variables, $Y_i' \sim \nu^*_{\alpha_i'}$.
    Applying the exact result of Juškevičius~\cite{tj} for dimension 1 for any set of indices $1 \le i_1 <  \dots < i_k \le n$ we get
    %\[
        $t(\alpha_{i_1}', \dots, \alpha_{i_k}') \le t(\alpha_1', \dots, \alpha_k')$.
    %\]

    So by one of our main results, Theorem~\ref{close_to_line}, we have
    \begin{align}
        & \concdiam(X_1 + \dots + X_n, 1) \le  \concdiam (\sum_{i \in I} X_i',1) \nonumber
        \\ & \le \max_{I', I' \subseteq I, |I'| \ge |I| - 2 \epsilon n} \concdiam (\sum_{i \in I'} X_i',1) + \pr( B \ge 2\epsilon n) \nonumber
        \\ & \le \max_{I', I' \subseteq I, |I'| \ge |I| - 2 \epsilon n} \concdiam (\sum_{i \in I'} Y_i',1) + \pr( B \ge 2\epsilon n) \nonumber
        \\ & \le \max_{I', I' \subseteq \{1, \dots, n\}, |I'| \ge n-b -2 \epsilon n} \concdiam (\sum_{i \in I'} Y_i',1) + \pr( B \ge 2\epsilon n) \nonumber
        \\ & \le t(\alpha_1', \dots, \alpha_{\bar{n}}') + e^{-\frac {29} {15} \epsilon^2 n} \label{eq.apply_close_to_line}
        %\le  t(\alpha, n) (1+f(\epsilon))
    \end{align}
    where
    \[
       \bar{n} := n-\lfloor b + 2\epsilon n\rfloor = n - \lfloor m \rfloor.
    \]
%    Let us now estimate $V_{(1+\epsilon)\alpha}^*/V_{\alpha}^*$.
%    Except at the points $\frac 1 k$, the derivative of $V_{\alpha}^*$ with respect to $\alpha$ is
%    \[
%        - \frac 1 {12} (2 \lfloor \alpha^{-1} \rfloor + 1) \lfloor \alpha^{-1} \rfloor (\lfloor \alpha^{-1} \rfloor + 1) \ge 
%        - \frac 1 {12} (2 \alpha^{-1}  + 1) \alpha^{-1} (\alpha^{-1} + 1)
%    \]
%    So
%    \begin{mdframed}[hidealllines=true,backgroundcolor=orange!20]
    %Let $S = Y_1 + \dots + Y_n$, $\bar{S} = Y_1 + \dots +  Y_{n-\lfloor b + 2\epsilon n\rfloor}$ and $\bar{S}' = Y_1' + \dots +  Y_{n-\lfloor b + 2\epsilon n\rfloor}'$.

    Let $\alpha \in (\frac 1 {k+1}, \frac 1 k)$. Then from (\ref{eq.V_alpha}) 
    $-\frac {\partial V_\alpha^*} {\partial \alpha} = \frac 1 {12} k (k+1) (2 k + 1)$.  For $k=1$ the right side is equal to $\frac 1 2$, while for $k\ge 2$ it is at most $6^{-1} (k^2-1) (2 k + 1)  \le 2V^*_{\alpha} (2\alpha^{-1} + 1)$. Therefore, since $V_{\alpha}$ is convex on $\alpha \in (0,1]$, for any such $\alpha$ and $h > 0$, if $\alpha(1+h) \le 1$ then
    \[
        V_{\alpha (1+h)}^* \ge V_{\alpha}^* - \alpha h \max(2 V_{\alpha}^* (2 \alpha^{-1} + 1), \frac 1 2) \ge V_{\alpha}^*(1-6h) - \frac{\alpha h} 2.
    \]
    So for $t \in \{1, \dots, n\}$
    \begin{equation}\label{eq.varepsbound}
        V'_t := \sum_{i=1}^{t}V_{\alpha_i'}^*
        %\ge V^*_{t} (1 - 6 (\frac {60} {59}) \epsilon) - \frac 1 2 t \bar{\alpha} (\frac {60} {59}) \epsilon
        \ge  V^*_{t} (1 - 7 \epsilon) - \frac {30} {59} t \bar{\alpha} \epsilon.
    \end{equation}

    By Lemma~\ref{lem.clt} and the conditions of the theorem
    \begin{align*}
       t(\alpha_1, \dots, \alpha_n) = \frac {1 \pm \epsilon'} {\sqrt {2 \pi V^*}}. 
    \end{align*}
    As $\epsilon' < \frac 1 2$ we have $t(\alpha_1, \dots, \alpha_n) \ge \frac 1 {2\sqrt{2\pi V^*}}$ and by the definition of $\epsilon$, (\ref{eq.cVstar2}) and  (\ref{eq.condition_near_one}) for any $t \ge (1-c) n$
    \[
        \frac {30 t \bar{\alpha} \epsilon} {59 V^*_t} \le   \frac {30 \cdot 4} {59 \cdot 3} \frac {\bar{\alpha} \epsilon n} {V^*} \le \frac {10 C (8 \pi)^{1/4} \sqrt{\gamma}} {59} \le \frac 1 8.
    \]
    Therefore we have
    \begin{equation}\label{eq.epsilon_V}
        \epsilon_V := 7 \epsilon + \frac {40} {59} \frac{\bar{\alpha} \epsilon n} {V^*}        \le 7 \epsilon + 0.38 C \sqrt{\gamma} 
        \le \frac 1 8 + \frac 1 8 =  \frac 1 4
    \end{equation}
    and by (\ref{eq.varepsbound}) for any $t \ge (1-c) n$
    \begin{equation}\label{eq.V_prime}
        V'_t \ge V^*_t (1 - \epsilon_V).
    \end{equation}

    We shall now check that Lemma~\ref{lem.clt} can be applied to the
    %two shorter sequences $Y_1, \dots, Y_{\bar{n}}$ and 
    sequence $Y_1', \dots, Y_{\bar n}'$.
    Since (\ref{eq.cVstar2}) holds for $Y_1, \dots, Y_n$ and by (\ref{eq.m_eps})
    % -2 ceil(1/alpha)^3 
    \begin{equation}\label{eq.bless}
        b + 2\epsilon n = C \xi_d(\bar{\alpha})^{\frac 1 2} t(\alpha_1, \dots, \alpha_n)^{- \frac 1 2} n^{\frac 1 2}
        \le \frac {c n} 5 < \frac {c n} 2,
    \end{equation}
    %we get that (\ref{eq.cVstar2}) also holds for $Y_1, \dots, Y_{\bar{n}}$ with the constant $\frac c 2$ instead of $c$: $V_{\lceil (1- c/2) \bar{n} \rceil}^* \ge \frac 3 4 V^* \ge \frac 3 4 V^*_{\bar{n}}$.
    %Thus 
    using % (\ref{eq.varepsbound})  and (\ref{eq.epsilon_V})
 (\ref{eq.cVstar2}) and 
    (\ref{eq.V_prime})
    we get that (\ref{eq.cVstar}) also holds for $Y_1', \dots, Y'_{\bar{n}}$ with the constant $\frac c 2$ instead of $c$, i.e.:
    \[
        V'_{\lceil (1-c/2) \bar{n}\rceil} \ge (1-\epsilon_V) V^*_{\lceil (1-c/2) \bar{n}\rceil} \ge (1-\epsilon_V) \frac 3 4 V^* \ge \frac 9 {16} V^*_{\bar n} \ge \frac 9 {16} V'_{\bar n}  >  \frac 1 2 V'_{\bar{n}}. 
    \]
    The absolute moments of $\nu^*_\alpha$ decrease as $\alpha$ increase.
    So %by (\ref{eq.cVstar}) and (\ref{eq.berryesseen})  for $S_n$
    since (\ref{eq.berryesseen}) holds for $Y_1, \dots, Y_n$
    we have that (\ref{eq.berryesseen}) holds also 
    %for $Y_1, \dots Y_{\bar{n}}$ with $(4/3)^{3/2} \delta'$ (or any larger value) instead of $\delta'$ and 
    for $Y_1', \dots Y_{\bar{n}}'$ with $\delta''$ instead of $\delta'$ where $\delta'' = (4/3)^{3/2} \delta' (1-\epsilon_V)^{-3/2} \le (4/3)^3 \delta'$. Here we again used (\ref{eq.cVstar2}) and (\ref{eq.V_prime}).

    We shall apply Lemma~\ref{lem.clt} to 
    the sequence
    %sequences $Y_1, \dots, Y_{\bar n}$
    %and 
    $Y_1', \dots, Y_{\bar n}'$
    with parameters 
    \[
        %(c/2, (4/3)^3 \delta', 2^4 3^{-3/2} \epsilon')
        (c/2, (4/3)^3 \delta', 8 \epsilon'/3)
    \]
    instead of $(c, \delta', \epsilon')$. 

    It remains to check that (\ref{eq.clt_eps}) holds for this shorter sequence. % in both cases.
    %Note that
    %Since
    By (\ref{eq.cVstar2}) and (\ref{eq.bless}) we have
    \[
        V^*_{\bar n} \ge V'_{\bar n} \ge (1 - \epsilon_V) V_{\bar n}^* \ge \frac 3 4 (1-\epsilon_V) V^* \ge \frac 9 {16} V^*.
    \]
    Thus 
    %replacing $V^*$ with $V'_{\bar n}$, %or  $V^*_{\bar n}$,
    replacing $\delta'$ with $(4/3)^3 \delta'$ and replacing $c$ with $\frac c 2$ increases the left side of (\ref{eq.clt_eps}) by 
    %at most $\max(\sqrt{2^{3/2}(4/3)^3}, 4/3) \le \frac 8 3$,
    $2^{3/4}(4/3)^{3/2} \le \frac 8 3$,
    so (\ref{eq.clt_eps}) holds 
for 
%both sequences
 $Y_1', \dots, Y_{\bar n}'$
with the claimed 
parameters by our stronger requirement that 
%$\epsilon' \le \frac 3 {32}$.
$\epsilon' \le \frac 3 {16}$.

    %By the definition of $\epsilon'$, $\sqrt{8 (4/3)^3}  \epsilon' \le \frac 1 2$, so 
    Thus Lemma~\ref{lem.clt} yields:
    \begin{align*}
        %&t(\alpha_1, \dots, \alpha_{\bar{n}}) = \frac {1 \pm  \frac 8 3  \epsilon'} {\sqrt {2 \pi V_{\bar{n}}^*}};
        %\\ & 
        t(\alpha_1', \dots, \alpha_{\bar{n}}') &\le \frac {1 +  \frac 8 3   \epsilon'} {\sqrt {2 \pi V_{\bar{n}}^* (1 - \epsilon_V)}}
        \\ & \le \frac 1 {\sqrt{2 \pi V_{\bar{n}}^*}} (1 + \frac {2 \cdot 8} 3 \epsilon' + 2 \epsilon_V)
        \\ & \le \frac 1 {\sqrt{2 \pi V_{\bar{n}}^*}} (1 + 6 \epsilon' + 14 \epsilon + C \sqrt{\gamma}).
    \end{align*}  
    Here in the second line we used $\frac 8 3  \epsilon'\le \frac 1 2$ and $\epsilon_V \le \frac 1 4$ and in the last line we used  (\ref{eq.epsilon_V}). The proof is completed by combining the last bound with (\ref{eq.apply_close_to_line}) and using $\epsilon = \frac m {4n}$.
\end{proofof}

\medskip

Now our first two theorems follow easily as simple special cases.

\medskip

\begin{proofof}{Theorem~\ref{thm.front} and Theorem~\ref{teor1}}
 It is enough to prove the more general Theorem~\ref{teor1}. We may assume $\alpha < 1$. Let us apply Theorem~\ref{thm.main} to each $n \in \{1, 2, \dots\}$ with $\alpha_1 = \dots = \alpha_n = \alpha$.

       For the i.i.d. summands $Y_1, \dots, Y_n \sim \nuopt \alpha$ we have $V^*_{\lfloor (1-c)n \rfloor} \ge (1-c) V^*_n$ for any $c \in (0,1)$. So we can take, for example $c=\frac 1 4$ so that (\ref{eq.cVstar2}) is satisfied for any $n$. Let the constant $C = C(d, \|\cdot\|)$ be as in (\ref{eq.defbigC}). 
    Since $\alpha$ is fixed, (\ref{eq.berryesseen}) holds for any positive $\delta'$
    with $n$ sufficiently large. Since $V^*_n \to \infty$ as $n \to \infty$,
    we can fix a $\delta' \in (0,1)$ such that for all $n$ large enough,
    $\epsilon'$ defined in (\ref{eq.clt_eps}) satisfies $\epsilon' \le 0.05$.
    We can actually make $\epsilon'$ arbitrarily small for all large enough $n$ if we choose $\delta'$ small enough.

    As $\bar{\alpha}$ ($\bar{\alpha}=\alpha$) is constant and $V^*_n$ is linear in $n$, we can choose
    an arbitrarily small $\gamma$ such that (\ref{eq.condition_near_one})
    is satisfied for all $n$ large enough.

    Consequently, it follows by Lemma~\ref{lem.clt} (or a standard local limit theorem) that $t(\alpha_1, \dots, \alpha_n) =  \concdiam(Y_1 + \dots +Y_n, 1) = (2 \pi n V_\alpha)^{-\frac 1 2} (1 + o(1))$, so $m$ is of the order $n^{\frac 3 4}$. Therefore (\ref{eq.m_eps}) is satisfied for all $n$ large enough. This also implies that for the error term we have $e^{-m^2/(9n)} = e^{-\Omega(n^{\frac 1 2})} = o(t(\alpha_1, \dots, \alpha_n))$.

    As $m n^{-1}=o(1)$, $V^*_{n-\lfloor m \rfloor} = V^*_n (1+o(1))$ and Theorem~\ref{thm.main} holds with arbitrarily small $\epsilon'$ and $\gamma$,
    for all $n$ large enough, we get
    \begin{align*}
       &\concdiam(X_1 + \dots + X_n, 1) \le \frac 1 {\sqrt{2 \pi V_n^*}} (1 + o(1)) 
        \\ &
        = \pr(Y_1 + \dots + Y_n \in \{0,\frac 1 2\}) (1+o(1))
       \\ &
        = \concdiam(Y_1 + \dots + Y_n, 1) (1 + o(1)).
       % = \pr(U_1 + \dots + U_n \in \{\frac {n k} 2,  \frac {n k + 1} 2\}) (1 + o(1))
       % \\ & 
       % = \sup_{l \in \mathbb{Z}} \pr(U_1 + \dots + U_n = l)(1 + o(1)).
    \end{align*}
    In the setting of Theorem~\ref{thm.front} just note that the value of the concentration function is not affected if
    we shift each $Y_i$ by $\frac {k+1} 2$ to make it distributed as $U_i$.
\end{proofof}

Finally, Theorem~\ref{local} follows as an asymptotic version of Theorem~\ref{thm.main}.

\medskip

\begin{proofof}{Theorem~\ref{local}}
    By the assumptions of the theorem, for each $c \in (0, \frac 1 3)$
    we can choose sequences of positive numbers $\{\delta'_l\}$, $\{\gamma_l\}$
    $\delta'_l \to 0$, $\gamma_l \to 0$ so that for $l \in \{1,2,\dots\}$
    Theorem~\ref{thm.main} applies with the absolute constant $C=C(d, \|\cdot\|)$ defined in (\ref{eq.defbigC}), 
    $\delta' = \delta_l'$ and $\gamma = \gamma_l'$.
    With this choice the left side of (\ref{eq.clt_eps}) converges to 0 as $l \to \infty$, 
    so we get that $\concdiam(X_{l,1} + \dots + X_{l,n}, 1) \le (2 \pi V^*_{l, \lceil n_l(1-c) \rceil})^{-1/2} (1 + o(1)) + r_l$,
    where $r_l = \exp(-C^2 \xi_d(\bar{\alpha}_l) t_l^{-1}/9)$ and $t_l=t(\alpha_{l,1}, \dots, \alpha_{l,n_l})$. Assuming that 
    \begin{equation}\label{eq.exp_remainder}
        r_l = o((2 \pi V^*_{l, {n_l}})^{-1/2})
    \end{equation}
    as $l\to\infty$, the proof of the upper bound is 
    completed by considering $c$ arbitrarily small, using the last assumption of the theorem and using Remark~\ref{rmk.explicit_bound}.

    By Lemma~\ref{lem.clt} we have that 
    \begin{equation}\label{eq.tl0}
        t_l =  (2 \pi V^*_{l, n_l})^{-1/2} (1 + o(1)) \to 0.
    \end{equation}
    This implies the lower bound in (\ref{eq.sharpb}).

    It remains to show (\ref{eq.exp_remainder}).
    First consider the case $\xi_d(\bar{\alpha}_l)=\Omega(1)$. 
    %, that is $d > 2$ or $\bar{\alpha}_l = \Omega(1)$.
    Then by (\ref{eq.tl0}) $r_l = \exp(-\Omega(t_l^{-1})) = o(t_l^{-1})$
    and this implies (\ref{eq.exp_remainder}).

    Now consider the case $\bar{\alpha}_l \to 0$. We can assume $\bar{\alpha}_l \le \frac 1 4$.
    Using (\ref{eq.tl0}) we can see that (\ref{eq.exp_remainder})
    holds if, for example,
    \[
        \frac {C^2 \bar{\alpha}_l} 9 \ge \frac {2 \ln \left( (2 \pi V^*_{l, n_l})^{1/2} \right)} {(2 \pi V^*_{l, n_l})^{1/2}}
    \]
    which for large enough $l$ follows from
    \[
        \frac {C^2 \bar{\alpha}_l} 9 \ge \frac {2 \ln  V^*_{l, n_l}} {(2 \pi V^*_{l, n_l})^{1/2}}.
    \]
    Remark~\ref{rmk.explicit_bound} yields for $\bar{\alpha}_l \le \frac 1 2$
    \[
        \bar{\alpha}_l \ge \left(\frac 3 4 \cdot \frac {\pi n_l} 6 \right)^{\frac 1 2} ({2 \pi V^*_{l, n_l}})^{- \frac 1 2} \ge \frac 1 2 n_l^{\frac 1 2} ({2 \pi V^*_{l, n_l}})^{- \frac 1 2} 
    \]
    Thus a sufficient condition for (\ref{eq.exp_remainder}) is
    \[
        \frac {C^2 n_l^{\frac 1 2}} {9 \cdot 2 (2 \pi V^*_{l, n_l})^{1/2} }  \ge \frac {2 \ln V^*_{l, n_l}} {(2 \pi V^*_{l, n_l})^{1/2}},
    \]
    \[
        \frac {C^2 n_l^{\frac 1 2}} {18}  \ge 2 \ln  V^*_{l, n_l}\quad \mbox{or}\quad V^*_{l,n_l} \le \exp(\frac 1 {36} C^2 n_l^{\frac 1 2}).
    \]
\end{proofof}

%\section{Generalization of the Leader-Radcliffe conjecture: counterexamples}
%\label{sec.counterexamples}

%It would be tempting to conjecture a stronger inequality than that of (\ref{teor1}) holds, i.e., that it holds without the $(1+o(1))$
%factor (the case with $\alpha = \frac{1}{2}$ being exactly the statement of the Leader-Radcliffe conjecture). This would confirm the common belief that the worst case is provided by the worst one-dimensional examples.
%
%This would generalize the conjecture of Leader and Radcliffe~\cite{LR} and the worst case one-dimensional examples are provided in case~\cite{tj}. However, the exact inequality turns out to be false, at least when $n=2$ and some values of $\alpha$.
%
%A counterexample for 2-dimensional Euclidean space is when both $X_1$ and $X_2$ are chosen uniformly at random from the vertices of a regular octagon with radius $(\sqrt{2} + 2)^{-\frac 1 2} \approx 0.5412$. Then $\concdiam(X_1+X_2, 1) = \concdiam(X_1, 1) = \frac 3 8$. This value is a sharp upper bound for $\alpha=\frac 3 8$ since $\concdiam(X_1+X_2,1)\leq \concdiam(X_1,1)$. It is also strictly larger than $\concdiam(Y_1+Y_2, 1)$ for $Y_1, Y_2 \sim \nuopt_\alpha$ and $\alpha \in [\frac 3 8, \frac 5 {12})$. Non-discrete counterexamples may be obtained when $X_1$ and $X_2$ are uniformly distributed on a 2-dimensional sphere of an appropriate radius.
%

\begin{proofof}{Remark~\ref{rmk.cormain}}
    The proof follows by Theorem~\ref{local}, the fact that
    $V_{\bar{\alpha}_l} = \frac 1 {12 \bar{\alpha}^2_l} (1 + o(1))$ when $\bar{\alpha}_l \to 0$ and $V_{\bar{\alpha}_l} = \frac 1 2 (1-\bar{\alpha}_l)$ when $\bar{\alpha}_l > 0.5$. %and Remark~\ref{rmk.explicit_bound}.
\end{proofof}

\medskip

Let us end this section with a comparison of our results to a known result. For the case $d=1$, Kesten~\cite{kesten} proved a concentration bound of the correct order. 

\begin{theorem}(Theorem~2 of~\cite{kesten}) 
    There is an absolute constant $C > 0$ such that for independent random variables $X_1, \dots, X_n$ and any  $0<\lambda_1, \dots, \lambda_n \le 2L$ if
$S_n = X_1 + \dots + X_n$ then
\begin{align*}
    \conc(S_n, L) 
    & \le 4 \sqrt 2 (1 + 9 C) L \frac{\sum_{i=1}^n \lambda_i^2 (1-\conc(X_i, \lambda_i))\conc(X_i, L)}{\left(\sum_{i=1}^n \lambda_i^2 (1 - \conc(X_i, \lambda_i))\right)^{\frac 3 2}}
    \\ &
    \le 4 \sqrt 2 (1 + 9 C) L \frac{\max_{i} \conc(X_i, L)}{\sqrt{\sum_{i=1}^n \lambda_i^2 (1 - \conc(X_i, \lambda_i))}}.
\end{align*}
\end{theorem}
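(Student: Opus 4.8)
The plan is to use the Fourier-analytic (Esseen) method that underlies the Kolmogorov--Rogozin inequality, and then to upgrade its crude ``total spread'' estimate by tracking, frequency by frequency, how much each individual concentration $\conc(X_i,L)$ forces the corresponding characteristic function away from $1$. Throughout set $Q_i=\conc(X_i,\lambda_i)$, $q_i=\conc(X_i,L)$ and $B=\sum_{i=1}^n\lambda_i^2(1-Q_i)$; write $\phi_i$ for the characteristic function of $X_i$, $\tilde X_i=X_i-X_i'$ for the symmetrization (so $|\phi_i|^2$ is the characteristic function of $\tilde X_i$, hence real and nonnegative), and $\phi_{S_n}=\prod_i\phi_i$. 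Since $\sum_i\lambda_i^2(1-Q_i)=B$, the second displayed inequality is immediate from the first upon replacing each $q_i$ by $\max_j q_j$, so it suffices to prove $\conc(S_n,L)\le 4\sqrt2(1+9C)\,L\big(\sum_i\lambda_i^2(1-Q_i)q_i\big)B^{-3/2}$.

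First I would apply Esseen's concentration inequality, $\conc(S_n,L)\le C L\int_{-1/L}^{1/L}|\phi_{S_n}(t)|\,dt=C L\int_{-1/L}^{1/L}\prod_i|\phi_i(t)|\,dt$ (this fixes the constant $C$ of the statement), and bound each factor by $|\phi_i(t)|\le \exp\!\big(-\tfrac12(1-|\phi_i(t)|^2)\big)=\exp\!\big(-\tfrac12 g_i(t)\big)$, where $g_i(t):=\E\big(1-\cos(t\tilde X_i)\big)\ge 0$. Next, for $|t|\le 1/L$ one needs the Kolmogorov--Rogozin-type lower bound $\sum_i g_i(t)\ge c\,B\,t^2$ with an absolute $c>0$; the only subtlety here is that $1-\cos(t\tilde X_i)$ oscillates, so a distribution of $\tilde X_i$ living on a coarse lattice or having heavy tails could make $g_i$ small at an individual frequency even when $1-Q_i$ is not. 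The standard remedy is to replace $\tilde X_i$ by its truncation to $\{|\tilde X_i|\le 2L\}$ before estimating: on that range $|t\tilde X_i|\le 2$, where $1-\cos$ is comparable to its quadratic term, and the hypothesis $\lambda_i\le 2L$ is exactly what guarantees that the truncated variable still carries $\gtrsim\lambda_i^2(1-Q_i)$ worth of second moment. Granting this, $\int_{-1/L}^{1/L}\exp\!\big(-\tfrac c2 B t^2\big)\,dt\lesssim B^{-1/2}$ (the case $B\lesssim L^2$ being trivial), which already yields the classical $\conc(S_n,L)\lesssim LB^{-1/2}$.

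To gain the extra weighted factor I would combine the exponential product bound with the companion ``reverse Esseen'' estimate, obtained by integrating $|\phi_i|^2$ against a Fej\'er kernel: $\int_{-1/L}^{1/L}|\phi_i(t)|^2\,dt\lesssim q_i/L$. The idea is to split the frequency interval into dyadic shells $|t|\asymp 2^{-\ell}/L$ and, on each shell, isolate an index $j$ whose $\lambda_j^2(1-Q_j)$ contributes a fixed fraction of the ``effective variance at that scale'': the $j$-th factor is controlled by the shell portion of $\int|\phi_j|^2\lesssim q_j/L$, while the remaining factors supply the Gaussian decay $\exp(-\tfrac c2 B_j t^2)$ with $B_j\asymp B$ from the previous paragraph. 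Summing the shell contributions and then optimizing the assignment of isolated indices across shells should produce exactly $\sum_i\lambda_i^2(1-Q_i)q_i\,B^{-3/2}$; multiplying by $CL$ and propagating the Esseen constants carefully gives the stated bound with the factor $4\sqrt2(1+9C)$.

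The step I expect to be the real obstacle is this last one: plain Kolmogorov--Rogozin perceives only the aggregate spread $B$, and replacing $LB^{-1/2}$ by the $q_i$-weighted average genuinely requires using, scale by scale, the size of $\conc(X_i,L)$ as a bound on $|\phi_i|$ away from the origin, and then arranging the dyadic bookkeeping so that the pieces reassemble into the clean form $\sum\lambda_i^2(1-Q_i)q_i\,B^{-3/2}$ rather than a weaker combination --- a direct application of Cauchy--Schwarz, for instance, leaks a spurious square root ($\conc\lesssim L^{1/2}q_j^{1/2}B^{-1/4}$) and must be avoided. The truncation argument of the second paragraph, where the hypothesis $\lambda_i\le 2L$ enters and which is what makes the whole estimate robust to lattice-type or heavy-tailed summands, is the secondary technical point.
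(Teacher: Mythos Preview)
The paper does not prove this statement. It is quoted verbatim as Theorem~2 of Kesten~\cite{kesten} at the end of Section~\ref{sec.general}, solely to compare Kesten's constant with the constants obtained in the paper's own bounds; no argument for it is given or sketched. So there is nothing in the paper to compare your proposal against.

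As for the proposal itself: the first two paragraphs are the standard Esseen/Kolmogorov--Rogozin machinery and are fine, but they only deliver $\conc(S_n,L)\lesssim L B^{-1/2}$. The third paragraph, where you attempt to upgrade this to the $q_i$-weighted bound, is not a proof but a hope. ``Isolate on each dyadic shell an index $j$ whose $\lambda_j^2(1-Q_j)$ contributes a fixed fraction of the effective variance'' and ``optimizing the assignment of isolated indices across shells should produce exactly $\sum_i\lambda_i^2(1-Q_i)q_i\,B^{-3/2}$'' are both assertions without a mechanism: you have not explained why such an index exists on every shell, why the remaining factors still give you $B_j\asymp B$ after the $j$-th is removed, or how the shell sums recombine into the claimed expression rather than something weaker. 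You yourself flag that a naive Cauchy--Schwarz loses a square root, but you give no alternative. Kesten's actual proof does not proceed this way; he works directly with the integral $\int|\phi_{S_n}|$ and uses a more careful pointwise comparison of $\prod|\phi_i|$ with a convex combination indexed by the summands, which is what produces the linear weights $\lambda_i^2(1-Q_i)q_i$ in the numerator without any dyadic splitting. If you want to carry out your route you must actually execute the shell-by-shell bookkeeping; as written the crucial step is missing.
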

Applying it with our conditions $L = \lambda_1 = \dots = \lambda_n = 1$ and, for example, with $\conc(X_i, 1) = \bar{\alpha}=\bar{\alpha}(n)$ for all $i$ we get
\[
    \conc(X_1 + \dots + X_n, 1) 
    \le 4 \sqrt 2 (1 + 9 C)  \frac{\bar{\alpha}}{\sqrt{(1 - \bar{\alpha})n}}.
\]
For $\bar{\alpha} \ge \frac 1 2$ and $1 - \bar{\alpha} \gg n^{-1/3}$, Theorem~\ref{local} and (\ref{eq.V_alpha}) show that $\conc(X_1 + \dots + X_n, 1) \le \frac 1 {\sqrt {{\pi (1-\bar{\alpha})n}}} (1+o(1))$. Depending on $\bar{\alpha}$, Kesten's constant is at least 5--10 times larger. For $\bar{\alpha} \in (0, \frac 1 2)$, $\bar{\alpha} \gg n^{-1/2}$
the constant in our cruder bound (\ref{eq.crudeb}) is at least 4--5 times smaller than Kesten's. For the case $d=1$, using Remark~\ref{rmk.explicit_bound}, the precise asymptotics also follow from Theorem~1.2 of Juškevičius~\cite{tj} without the restrictions on $\bar{\alpha}$ -- many of them in Theorem~\ref{local} seem to be artefacts of our method and are probably not necessary.
%Corollary~\ref{cor.main} extends it asymptotically for any fixed dimension $d$.  
%\m{Įdėti T1.2 iš \cite{tj} pradžioje}

\bigskip

\textbf{Acknowledgment.} We thank V.K.'s brother Vilius Kurauskas for drawing attention to a Quanta Magazine article that inspired us to consider perfect graphs in this work.

\bigskip

\textsc{Institute of Computer Science of the Czech Academy of Sciences, Pod Vodárenskou věží 271/2
182 00 Praha 8, Czechia}

Email address: tomas.juskevicius@gmail.com

\bigskip

\hyphenation{a-ka-de-mi-jos}

\textsc{Faculty of Mathematics and Informatics, Vilnius University, Akademijos 4, 08412, Vilnius, Lithuania}

Email address: valentas@gmail.com

\end{document}